\theoremstyle{plain}
\newtheorem{theorem}{Theorem}
\newtheorem{corollary}[theorem]{Corollary}
\newtheorem{lemma}[theorem]{Lemma}
\theoremstyle{definition}
\numberwithin{equation}{section}
\numberwithin{theorem}{section}
\DeclareSymbolFont{letters}{OML}{cmm}{m}{it}
\DeclareSymbolFont{bletters}{OML}{cmm}{b}{it}
\DeclareMathSymbol{\bphi}{\mathord}{bletters}{"1E}
\DeclareMathSymbol{\bvarphi}{\mathord}{bletters}{"27}
\DeclareMathSymbol{\bvartheta}{\mathord}{bletters}{"23}
\DeclareMathSymbol{\bmu}{\mathord}{bletters}{"16}
\DeclareMathSymbol{\bpsi}{\mathord}{bletters}{"20}
\DeclareMathSymbol{\bnu}{\mathord}{bletters}{"17}
\DeclareMathSymbol{\balpha}{\mathord}{bletters}{"0B}
\DeclareMathSymbol{\bbeta}{\mathord}{bletters}{"0C}
\DeclareMathSymbol{\blambda}{\mathord}{bletters}{"15}
\DeclareMathSymbol{\bomega}{\mathord}{bletters}{"21}
\DeclareMathSymbol{\bkappa}{\mathord}{bletters}{"14}
\DeclareMathSymbol{\bLambda}{\mathord}{bletters}{"03}
\DeclareMathSymbol{\bdelta}{\mathord}{bletters}{"0E}
\DeclareMathSymbol{\bDelta}{\mathord}{bletters}{"01}
\DeclareMathSymbol{\bxi}{\mathord}{bletters}{"18}
\DeclareMathSymbol{\bXi}{\mathord}{bletters}{"04}
\DeclareMathSymbol{\bvareps}{\mathord}{bletters}{"22}
\begin{document}
\title[H\"{o}lder index for superprocesses at a given point]{H\"{o}lder index at a given point for density states\vspace{10pt}\\of super\thinspace-\thinspace$\alpha$\thinspace-\thinspace stable motion of
index $1+\beta$\vspace{8pt}}
\author[Fleischmann]{Klaus Fleischmann}
\address{Weierstrass Institute for Applied Analysis and Stochastics, Leibniz Institute
in Forschungsverbund Berlin e.V., Mohrenstr.\ 39, D--10117 Berlin, Germany}
\email{fleischm@wias-berlin.de}
\author[Mytnik]{Leonid Mytnik}
\address{Faculty of Industrial Engineering and Management, Technion Israel Institute of
Technology, Haifa 32000, Israel}
\email{leonid@ie.technion.ac.il}
\urladdr{http://ie.technion.ac.il/leonid.phtml}
\author[Wachtel]{Vitali Wachtel}
\address{Mathematical Institute, University of Munich, Theresienstrasse 39, D--80333
Munich, Germany}
\email{wachtel@mathematik.uni-muenchen.de}
\thanks{Extended version of November 24, 2010 of the WIAS preprint No.\thinspace1385 of
December 12, 2008,\quad ISSN 0946-8633,\quad fixed87.tex}
\thanks{Supported by the German Israeli Foundation for Scientific Research and
Development, Grant No. G-807-227.6/2003}
\thanks{Corresponding author: Klaus Fleischmann}
\thanks{Running head: H\"{o}lder index for superprocesses at a given point}
\keywords{H\"{o}lder continuity at a given point, optimal exponent, multifractal
spectrum, Hausdorff dimension}
\subjclass{Primary 60\thinspace J80; Secondary 60\thinspace G57}
\maketitle

\thispagestyle{empty}

\setcounter{page}{0}\newpage

\begin{quotation}
\noindent\textsc{Abstract.} A H\"{o}lder regularity index at \emph{given
points}\/ for density states of $(\alpha,1,\beta)$-superprocesses with
$\alpha>1+\beta$ is determined. It is shown that this index is strictly
greater than the optimal index of \emph{local}\/ H\"{o}lder continuity for
those density states.
\end{quotation}

\section{Introduction and statement of results}

For $\,0<\alpha\leq2$\thinspace\ and $\,1+\beta\in(1,2),$\thinspace\ the
so-called $(\alpha,d,\beta)$\emph{-superprocess}\/ $X=\{X_{t}:\,t\geq
0\}$\thinspace\ in $\mathsf{R}^{d}$ is a finite measure-valued process related
to the log-Laplace equation%
\begin{equation}
\frac{\mathrm{d}}{\mathrm{d}t}u\ =\
\bDelta
_{\alpha}u\,+au-\,bu^{1+\beta}, \label{logLap}%
\end{equation}
where $\,a\in\mathsf{R}$\thinspace\ and $\,b>0$\thinspace\ are any fixed
constants. Its underlying motion is described by the fractional Laplacian $%
\bDelta
_{\alpha}:=-(-%
\bDelta
)^{\alpha/2}$\thinspace\ determining a symmetric $\alpha$--stable motion in
$\mathsf{R}^{d}$ of index $\,\alpha\in(0,2]$\thinspace\ (Brownian motion if
$\,\alpha=2),$\thinspace\ whereas its continuous-state branching mechanism%
\begin{equation}
v\,\mapsto\,-av+bv^{1+\beta},\quad v\geq0, \label{not.Psi}%
\end{equation}
belongs to the domain of attraction of a stable law of index $\,1+\beta
\in(1,2)$\thinspace\ (the branching is critical if $\,a=0).$

\textrm{F}rom now on we assume that $d<\frac{\alpha}{\beta}\,.$\thinspace
\ Then $\,X$\thinspace\ has a.s. \emph{absolutely continuous states}\/
$\,X_{t}(\mathrm{d}x)$\thinspace\ at fixed times $\,t>0$\thinspace
\ (cf.\ Fleischmann \cite{Fleischmann1988.critical} with the obvious changes
for $a\neq0$). Moreover, as is shown in Fleischmann, Mytnik, and Wachtel
\cite[Theorem~1.2(a),(c)]{FleischmannMytnikWachtel2010.optimal.Ann}, there is
a \emph{dichotomy}\/ for their density function (also denoted by $X_{t})$:
There is a continuous version $\tilde{X}_{t}$ of the density function if
$\,d=1$ and $\,\alpha>1+\beta,$\thinspace\ but otherwise the density function
$X_{t}$ is locally unbounded on open sets of positive $X_{t}(\mathrm{d}%
x)$-measure. (The case $\alpha=2$ had been derived earlier in Mytnik and
Perkins \cite{MytnikPerkins2003}.) In the case of continuity, H\"{o}lder
regularity properties of $\tilde{X}_{t}$ had been studied in
\cite{FleischmannMytnikWachtel2010.optimal.Ann}, too.

Let us first recall the notion of an optimal H\"{o}lder index at a point (see
e.g. Jaffard \cite{Jaffard1999}). We say a function $f$ is H\"{o}lder
continuous with index $\eta\in(0,1]$ \emph{at the point}\/ $x$ if there is an
open neighborhood $U(x)$ of $\,x$ and a constant $C$ such that%
\begin{equation}
\bigl|f(y)-f(x)\bigr|\,\leq\,C\,|y-x|^{\eta}\quad\text{for all}\,\ y\in U(x).
\end{equation}
The \emph{optimal H\"{o}lder index}\/ $H(x)$ of $f$ at the point $x$ is
defined as%
\begin{equation}
H(x)\,:=\,\sup\bigl\{\eta\in(0,1]:\ f\ \text{is H\"{o}lder continuous at
}x\ \text{with index }\eta\bigr\},
\end{equation}
and set to $0$ if $f$ is not H\"{o}lder continuous at $x.$

Going back to the continuous (random) density function $\tilde{X}_{t\,}%
,$\thinspace\ in what follows, $H(x)$ will denote the (random) optimal
H\"{o}lder index of $\tilde{X}_{t}$ at $x\in\mathsf{R}.$ In
\cite[Theorem~1.2(a),(b)]{FleischmannMytnikWachtel2010.optimal.Ann}, the
so-called \emph{optimal index}\/ for \emph{local}\/ H\"{o}lder continuity of
$\,\tilde{X}_{t}$ had been determined by $\,$%
\begin{equation}
\eta_{\mathrm{c}}\,:=\,\frac{\alpha}{1+\beta}-1\,\in\,(0,1). \label{eta_c}%
\end{equation}
This means that in any non-empty open set $U\subset\mathsf{R}$ with
$X_{t}(U)>0$ one can find (random) points $x$ such that $H(x)=\eta
_{\mathrm{c}\,}.$\thinspace\ This however left unsolved the question whether
there are points $x\in U$ such that $H(x)>\eta_{\mathrm{c}\,}.$\thinspace

\hspace{-3pt}The \emph{purpose}\/ of this note is to verify the following
theorem conjectured in \cite[Section~1.3]%
{FleischmannMytnikWachtel2010.optimal.Ann}. To formulate it, let
$\mathcal{M}_{\mathrm{f}}$ denote the set of finite measures on $\mathsf{R}%
^{d},\ $and $\,B_{\epsilon}(x)$ the open ball of radius $\epsilon>0$ around
$x\in\mathsf{R}^{d}.$

\begin{theorem}
[\textbf{H\"{o}lder continuity at a given point}]\label{T.prop.dens.fixed}%
\hspace{-1pt}Fix $\,\,t>0,\,\ z\in\mathsf{R},$\thinspace\ and $\,X_{0}=\mu
\in\mathcal{M}_{\mathrm{f}\,}.$\thinspace\ Let\/ $\,d=1$\thinspace\ and
$\,\alpha>1+\beta.$\thinspace\

\begin{description}
\item[(a) (H\"{o}lder continuity at a given point)] For each $\,\eta
>0$\thinspace\ satisfying
\[
\eta\,<\,\bar{\eta}_{\mathrm{c}}\,:=\,\min\Big\{\frac{1+\alpha}{1+\beta
}-1,\,1\Big\},
\]
with probability one, the continuous version $\tilde{X}_{t}$ of the density is
H\"{o}lder continuous of order $\,\eta$ at the point $z:$%
\[
\sup_{x\in B_{\epsilon}(z),\,x\neq z}\frac{\bigl|\tilde{X}_{t}(x)-\tilde
{X}_{t}(z)\bigr|}{|x-z|^{\eta}}\,<\,\infty,\quad\epsilon>0.
\]

\item[(b) (Optimality of $\bar{\eta}_{\mathrm{c}}$)] If additionally
$\beta>(\alpha-1)/2$, then\ with probability one for any $\,\epsilon>0,$%
\[
\sup_{x\in B_{\epsilon}(z),\,x\neq z}\frac{\bigl|\tilde{X}_{t}(x)-\tilde
{X}_{t}(z)\bigr|}{|x-z|^{\bar{\eta}_{\mathrm{c}}}}\,=\,\infty\quad
\text{whenever}\,\ X_{t}(z)>0.
\]
{}
\end{description}
\end{theorem}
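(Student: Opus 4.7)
The plan is to work with the compensated Poisson representation of the density. After the standard construction (cf.\ Mytnik), there is a Poisson random measure $N$ on $(0,t]\times\mathsf{R}\times(0,\infty)$ with intensity proportional to $ds\,dy\,u^{-2-\beta}du$ such that
\[
\tilde X_t(x) \,=\, S_t\mu(x) \,+\, \int_0^t\!\!\int_{\mathsf{R}}\!\!\int_0^\infty p_{\alpha,t-s}(x-y)\,u\,\tilde N(ds,dy,du),
\]
where $p_{\alpha,\cdot}$ is the $\alpha$-stable transition density, $S_t$ its semigroup, and $\tilde N$ the compensation of $N$. Since $S_t\mu$ is $C^\infty$ in $x$ for $t>0$, all the work reduces to controlling the stochastic part, which I denote $Z(x)$, for $x$ near $z$.

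For part (a), I would fix the scale $r=|x-z|$ and split $N=N^{\leq K}+N^{>K}$ at a truncation level $K=K(r)=r^{\gamma}$ with $\gamma=(1+\alpha)/(1+\beta)$. The compensated small-jump part admits moments of every order via the Burkholder inequality for Poisson integrals, and a direct computation using the self-similarity of $p_{\alpha,\cdot}$ gives
\[
E\bigl|Z^{\leq}(x)-Z^{\leq}(z)\bigr|^p \,\leq\, C_p\, r^{p\bar\eta_c}
\]
for every $p\geq 2$; the matching exponent emerges from $\gamma(1+\beta)=1+\alpha$ combined with the kernel bounds $\int|p_{\alpha,t-s}(x-y)-p_{\alpha,t-s}(z-y)|^q ds\,dy \asymp r^{1+\alpha-q}$ for $q\in\{2,p\}$. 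For the big-jump part, the Poisson intensity makes atoms with $u>K$ in the critical space-time window $\{t-s\lesssim r^{\alpha},\,|y-z|\lesssim r\}$ appear at expected rate of order one, and each such atom contributes at most $O(r^{\bar\eta_c})$ to the increment, handled via Poisson tail bounds. Applying Markov's inequality to the $p$-th moment bound at $O(2^n)$ well-chosen sample points in each dyadic annulus $\{2^{-n-1}<|x-z|\leq 2^{-n}\}$, combined with the local $(\eta_c-\varepsilon)$-Hölder continuity from the earlier paper to interpolate inside each small ball, and summing via Borel--Cantelli, yields Hölder continuity at $z$ with any index $\eta<\bar\eta_c$.

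For part (b), the plan is the converse Poisson argument. On a geometric sequence of scales $r_n=2^{-n}$ and on the event $\{X_t(z)>0\}$, I would exhibit infinitely many $n$ for which $N$ has a single atom $(s_n,y_n,u_n)$ with $t-s_n\asymp r_n^{\alpha}$, $|y_n-z|\asymp r_n$, and $u_n\asymp r_n^{\gamma}$; by the Poisson intensity these events have probability bounded below uniformly in $n$, and are independent across disjoint space-time windows. Each designated atom produces an increment $|\tilde X_t(x_n)-\tilde X_t(z)|\asymp r_n^{\bar\eta_c}$ at some $x_n$ with $|x_n-z|\asymp r_n$. The condition $\beta>(\alpha-1)/2$ then enters to show that the ambient fluctuation of $Z$ on this scale near $z$ is of strictly smaller order $r_n^{\bar\eta_c+\delta}$, so the designated atom cannot be compensated by the remaining noise. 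Borel--Cantelli on the independent thinned Poisson events forces the supremum in (b) to be infinite.

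The main obstacle is the precise tuning in (a): since the noise is only $(1+\beta)$-stable, one cannot work at a single moment order but must carefully balance the compensator contribution of $N^{\leq K}$ against the rare-atom contribution of $N^{>K}$, matching them exactly at the scale $r^{\bar\eta_c}$, and this is where the specific value $\gamma=(1+\alpha)/(1+\beta)$ is forced. A secondary difficulty in (b) is decoupling the designated big atom from the bulk fluctuations of $Z$ near $z$; this is precisely the role of the hypothesis $\beta>(\alpha-1)/2$, which guarantees that the competing $L^{1+\beta}$-fluctuations around $z$ are of strictly smaller order than the targeted $r^{\bar\eta_c}$.
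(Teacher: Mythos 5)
Your representation is set up with the wrong intensity, and this error propagates through both parts. The jump measure $N$ associated to the superprocess is \emph{not}\/ a Poisson random measure with deterministic intensity proportional to $\mathrm{d}s\,\mathrm{d}y\,u^{-2-\beta}\mathrm{d}u$: its compensator is $\varrho\,\mathrm{d}s\,X_{s}(\mathrm{d}y)\,u^{-2-\beta}\mathrm{d}u$ (a Cox structure, with the current state $X_{s}$ in place of Lebesgue measure). For part (a) this means your kernel estimate $\int|p^{\alpha}_{t-s}(x-y)-p^{\alpha}_{t-s}(z-y)|^{q}\,\mathrm{d}s\,\mathrm{d}y\asymp r^{1+\alpha-q}$ must instead be a bound for $\int\mathrm{d}s\int X_{s}(\mathrm{d}y)\,|\cdot|^{q}$, which requires first conditioning on an event controlling $\sup_{s,y}S^{\alpha}_{2^{\alpha}(t-s)}X_{s}(y)$; without that conditioning your exponent count is not justified. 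You also have not addressed jumps $u$ that are much larger than the truncation level $K=r^{\gamma}$ — the Poisson tail on the window gives $O(1)$ expected count only for $u\gtrsim K$, and for $u\gg K$ you need an a priori bound on jump sizes, which is exactly what the paper's jump-mass lemmas (and the high-probability event $A^{\varepsilon}$) supply. In place of the Rosenthal-type $L^{p}$ estimates you propose, the paper packages all of this more cleanly by writing $Z^{2}_{t}(x)-Z^{2}_{t}(0)$ as the difference of two time-changed spectrally positive $(1+\beta)$-stable processes and invoking a sharp supremum inequality for stable processes with truncated jumps.

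For part (b) the gap is fatal as written: because the intensity involves $X_{s}(\mathrm{d}y)$, the events ``a designated atom appears in the window at scale $r_{n}$'' are \emph{not}\/ independent across $n$, so the second Borel--Cantelli lemma does not apply as you invoke it. Moreover, the probability of such an atom is not bounded below uniformly in $n$ by mere positivity of $X_{t}(z)$; you must show that the mass of $X_{s}$ in a shrinking neighbourhood of $z$ persists as $s\uparrow t$, on the event $\{X_{t}(z)>0\}$. This is exactly the nontrivial content of the paper's left-continuity and local-boundedness lemmas, which show that a certain compensator integral diverges a.s.\ on $D_{\theta}$ and hence forces infinitely many big jumps without any appeal to independence. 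Finally, showing that the designated atom is not cancelled by the rest of the noise requires splitting according to whether the opposite-signed stable process $L_{n}^{-}$ itself has a big jump (needing two coincident rare atoms, hence summable probability) or not (then a stable-supremum bound applies); your description of this as ``ambient fluctuation of smaller order'' undersells a genuinely delicate step. Your identification of the critical truncation exponent $\gamma=(1+\alpha)/(1+\beta)$ and the role of $\beta>(\alpha-1)/2$ are correct, but the proof as sketched does not go through without addressing the random-intensity issue.
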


Theorem~\ref{T.prop.dens.fixed}(b) states the optimality of $\,\bar{\eta
}_{\mathrm{c}}$ in the case $\beta>(\alpha-1)/2$. But it is easy to see that
the opposite case $\beta\leq(\alpha-1)/2$\thinspace\ implies that $\bar{\eta
}_{\mathrm{c}}=1.$ Therefore the optimality of $\,\bar{\eta}_{\mathrm{c}}$
follows here automatically from the definition of $\,H(z).$ But opposed to the
local unboudedness of the ratio \textbf{$\frac{|\tilde{X}_{t}(x)-\tilde{X}%
_{t}(z)|}{|x-z|^{\bar{\eta}_{c}}}$} in the case $\beta>(\alpha-1)/2$, we
\emph{conjecture}\/ that $\tilde{X}_{t}$ is even Lipschitz continuous at the
given $z$ for $\beta<(\alpha-1)/2$.

Since $\,\eta_{\mathrm{c}}<\bar{\eta}_{\mathrm{c}\,},$\thinspace\ at each
given point $z\in\mathsf{R}$ the density $\tilde{X}_{t}$ allows some
H\"{o}lder exponents $\eta$ larger than $\,\eta_{\mathrm{c}\,},$%
\thinspace\ the optimal H\"{o}lder index for local domains. Thus,
Theorem~\ref{T.prop.dens.fixed} nicely complements the main result of
\cite{FleischmannMytnikWachtel2010.optimal.Ann}.

The full program however would include proving that for any $\eta\in
(\eta_{\mathrm{c}\,},\bar{\eta}_{\mathrm{c}})$ with probability one there are
(random) points $x\in\mathsf{R}$ such that the optimal H\"{o}lder index $H(x)$
of $\,\tilde{X}_{t}$ at $x$ is exactly $\eta.$ Moreover, one would like to
establish the \emph{Hausdorff dimension}, say $D(\eta)$, of the (random) set
$\,\bigl\{x:\,H(x)=\eta\bigr\}.$\thinspace\ The function $\,\eta\mapsto
D(\eta)$\thinspace\ then reveals the so-called \emph{multifractal spectrum}\/
related to the optimal H\"{o}lder index at points. As we already mentioned in
\cite[Conjecture~1.4]{FleischmannMytnikWachtel2010.optimal.Ann}, we
\emph{conjecture} that%
\begin{equation}
\lim_{\eta\downarrow\eta_{\mathrm{c}}}\,D(\eta)\,=\,0\quad\text{and}\quad
\lim_{\eta\uparrow\bar{\eta}_{\mathrm{c}}}\,D(\eta)\,=\,1\,\ \text{a.s.}%
\end{equation}
The investigation of such multifractal spectrum is left for future work.

The multifractal spectrum of random functions and measures has attracted
attention for many years and has been studied for example in Dembo et
al.\ \cite{DemboPeresRosenZeitouni2001}, Durand \cite{Durand2009}, Hu and
Taylor \cite{HuTaylor2000}, Klenke and M\"{o}rters \cite{KlenkeMoerters2005},
Le\thinspace Gall and Perkins \cite{LeGallPerkins1995}, M\"{o}rters and Shieh
\cite{MoertersShieh2004} and Perkins and Taylor \cite{PerkinsTaylor1998}. The
multifractal spectrum of singularities that describe the Hausdorff dimension
of sets of different H\"{o}lder exponents of functions was investigated for
deterministic and random functions in Jaffard
\cite{Jaffard1999,Jaffard2000,Jaffard2004} and Jaffard and Meyer
\cite{JaffardMeyer1996}.

Note also that\ in the case $\,\alpha=2$ $\,$for the optimal exponents
$\eta_{\mathrm{c}}$ and $\bar{\eta}_{\mathrm{c}}$ we have
\begin{equation}
\eta_{\mathrm{c}}\downarrow0\quad\text{and}\quad\bar{\eta}_{\mathrm{c}%
}\downarrow\tfrac{1}{2}\quad\text{as}\,\ \beta\uparrow1,
\end{equation}
whereas for continuous super-Brownian motion ($\beta=1)$ one would have
$\,\eta_{\mathrm{c}}=\frac{1}{2}=\bar{\eta}_{\mathrm{c}\,}.$\thinspace\ This
discontinuity reflects the essential differences between continuous and
discontinuous super-Brownian motion concerning H\"{o}lder continuity
properties of density states, as discussed already in \cite[Section
1.3]{FleischmannMytnikWachtel2010.optimal.Ann}.\smallskip

After some preparation in the next section, the proof of
Theorem~\ref{T.prop.dens.fixed}(a),(b) will be given in Sections~\ref{sec:6}
and \ref{S.opt}, respectively.

\section{Some proof preparation}

Let $\,p^{\alpha}$\thinspace\ denote the continuous $\alpha$--stable
transition kernel related to the fractional Laplacian $\,%
\bDelta
_{\alpha}=-(-%
\bDelta
)^{\alpha/2}$\thinspace\ in $\mathsf{R}^{d},$ and $S^{\alpha}$ the related
semigroup. Fix $\,X_{0}=\mu\in\mathcal{M}_{\mathrm{f}\,}\backslash\{0\}.$

First we want to recall the \emph{martingale decomposition}\/ of the
$(\alpha,d,\beta)$-superprocess $X$ (valid for any $\alpha,d,\beta;$ see,
e.g., \cite[Lemma~1.6]{FleischmannMytnikWachtel2010.optimal.Ann}): For all
sufficiently smooth bounded non-negative functions $\varphi$ on $\mathsf{R}%
^{d}$ and $t\geq0,$%
\begin{equation}%
\displaystyle
\left\langle X_{t},\varphi\right\rangle \,=\,\left\langle \mu,\varphi
\right\rangle +\int_{0}^{t}\!\mathrm{d}s\,\left\langle X_{s},%
\bDelta
_{\alpha}\varphi\right\rangle +M_{t}(\varphi)+a\,I_{t}(\varphi)
\label{mart.dec}%
\end{equation}
with discontinuous martingale $\,$%
\begin{equation}
t\,\mapsto\,M_{t}(\varphi)\,:=\,\int_{(0,t]\times\mathsf{R}^{d}\times
\mathsf{R}_{+}}\!\tilde{N}\!\left(  _{\!_{\!_{\,}}}\mathrm{d}(s,x,r)\right)
r\,\varphi(x) \label{mart}%
\end{equation}
and increasing process $\,$%
\begin{equation}
t\,\mapsto\,I_{t}(\varphi)\,:=\,\int_{0}^{t}\!\mathrm{d}s\,\left\langle
X_{s},\varphi\right\rangle \!. \label{incr}%
\end{equation}
Here $\tilde{N}:=N-\hat{N},$ where $N\!\left(  _{\!_{\!_{\,}}}\mathrm{d}%
(s,x,r)\right)  $ is a random measure on $(0,\infty)\times\mathsf{R}^{d}%
\times(0,\infty)$ describing all the jumps $\,r\delta_{x}$\thinspace\ of\/
$\,X$\thinspace\ at times $\,s$\thinspace\ at sites\thinspace\ $x$%
\thinspace\ of size $\,r$\thinspace\ (which are the only discontinuities of
the process $X).$ Moreover, $\,$%
\begin{equation}
\hat{N}\!\left(  _{\!_{\!_{\,}}}\mathrm{d}(s,x,r)\right)  \,=\,\varrho
\,\,\mathrm{d}s\,X_{s}(\mathrm{d}x)\,r^{-2-\beta}\mathrm{d}r \label{decomp}%
\end{equation}
is the compensator of $\,N,$ where $\,\varrho:=b$\thinspace$(1+\beta
)\beta/\Gamma(1-\beta)$\thinspace\ with $\Gamma$ denoting the Gamma function.

Recall that we assumed $\,d<\frac{\alpha}{\beta}$\thinspace,\thinspace\ and
fix $\,t>0.$\thinspace\ Then the random measure $\,X_{t}(\mathrm{d}%
x)$\thinspace\ is a.s.\ absolutely continuous. \textrm{F}rom the Green's
function representation related to (\ref{mart.dec}) (see, e.g., \cite[(1.9)]%
{FleischmannMytnikWachtel2010.optimal.Ann}) we obtain the following
representation of a version of the density function of $\,X_{t}(\mathrm{d}x)$
(see, e.g., \cite[(1.12)]{FleischmannMytnikWachtel2010.optimal.Ann}):%
\begin{equation}%
\begin{array}
[c]{l}%
\displaystyle
X_{t}(x)\,=\,\mu\!\ast\!p_{t}^{\alpha}\,(x)\,+\,\int_{(0,t]\times
\mathsf{R}^{d}}\!M\!\left(  _{\!_{\!_{\,}}}\mathrm{d}(s,y)\right)
p_{t-s}^{\alpha}(y-x)\vspace{6pt}\\%
\displaystyle
+\,a\!\int_{(0,t]\times\mathsf{R}^{d}}\!I\!\left(  _{\!_{\!_{\,}}}%
\mathrm{d}(s,y)\right)  p_{t-s}^{\alpha}(y-x)=:\,Z_{t}^{1}(x)+Z_{t}%
^{2}(x)+Z_{t}^{3}(x),\quad x\in\mathsf{R}^{d},
\end{array}
\label{rep.dens}%
\end{equation}
(with notation in the obvious correspondence). Here $M\!\left(  _{\!_{\!_{\,}%
}}\mathrm{d}(s,y)\right)  $ is the martingale measure related to (\ref{mart})
and $I\!\left(  _{\!_{\!_{\,}}}\mathrm{d}(s,y)\right)  $ the random measure
related to (\ref{incr}).

Let $\,\Delta X_{s}:=X_{s}-X_{s-\,},$\thinspace\ $s\in(0,t),$\thinspace
\ denote the jumps of the measure-valued process $X$\ by time $t.$ Recall that
they are of the form $\,r\delta_{x\,}.$\thinspace\ By an abuse of notation, we
also write $\,r=:\Delta X_{s}(x).$\thinspace\ Put
\begin{equation}
f_{s,x}\,:=\,\log\bigl((t-s)^{-1}\bigr)\,\mathsf{1}_{\{x\neq0\}}%
\log\bigl(|x|^{-1}\bigr). \label{def_f}%
\end{equation}
As a further preparation we turn to the following lemma. Recall that $t>0$ is fixed.

\begin{lemma}
[\textbf{A jump mass estimate}]\label{6.00}Fix $\,\,X_{0}=\mu\in
\mathcal{M}_{\mathrm{f}}\backslash\{0\}.$\thinspace\ Suppose\/ $\,d=1$%
\thinspace\ and $\,\alpha>1+\beta.$\thinspace\ Let \thinspace$\varepsilon
>0$\thinspace\ and $\,q>0.$\thinspace\ There exists a constant \thinspace
$c_{(\ref{in6.1a})}=c_{(\ref{in6.1a})}(\varepsilon,q)$ such that%
\begin{equation}
\mathbf{P}\Big(\Delta X_{s}(x)>c_{(\ref{in6.1a})}\bigl((t-s)|x|\bigr)^{\frac
{1}{1+\beta}}(f_{s,x})^{\ell}\text{ for some }s<t,\text{\ }\ x\in
B_{1/\mathrm{e}}(0)\Big)\!\leq\varepsilon, \label{in6.1a}%
\end{equation}
where
\begin{equation}
\ell\,:=\,\frac{1}{1+\beta}+q. \label{def.lambda1}%
\end{equation}
{}
\end{lemma}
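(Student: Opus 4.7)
The plan is to apply a first-moment (compensator) estimate to the Poisson jump measure $N$ from (\ref{mart})--(\ref{decomp}). Writing $B^{*}:=B_{1/\mathrm{e}}(0)\setminus\{0\}$ and $I_{t}:=(\max(t-1,0),t)$, note that $f_{s,x}>0$ exactly on $I_{t}\times B^{*}$; I interpret the condition in (\ref{in6.1a}) as restricted to this domain, since outside of it the factor $(f_{s,x})^{\ell}$ is not a positive real number. Setting
\[
A:=\bigl\{(s,x,r):s\in I_{t},\,x\in B^{*},\,r>c\bigl((t-s)|x|\bigr)^{1/(1+\beta)}(f_{s,x})^{\ell}\bigr\},
\]
the probability in question equals $\mathbf{P}(N(A)\geq1)\leq\mathbf{E}[\hat{N}(A)]$ by Markov. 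Carrying out the innermost $r$-integration (via $\int_{u}^{\infty}r^{-2-\beta}\mathrm{d}r=u^{-1-\beta}/(1+\beta)$) and using (\ref{decomp}) yields
\[
\mathbf{E}[\hat{N}(A)]=\frac{\varrho\,c^{-1-\beta}}{1+\beta}\,\mathbf{E}\int_{I_{t}}\!\int_{B^{*}}\frac{X_{s}(\mathrm{d}x)\,\mathrm{d}s}{(t-s)|x|\bigl(\log(t-s)^{-1}\bigr)^{\ell(1+\beta)}\bigl(\log|x|^{-1}\bigr)^{\ell(1+\beta)}}.
\]

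The next step is to use Fubini together with the first-moment formula $\mathbf{E}\langle X_{s},\psi\rangle=\mathrm{e}^{as}\langle\mu,S_{s}^{\alpha}\psi\rangle$, which follows immediately by taking expectations in the martingale decomposition (\ref{mart.dec}). Set $\psi(x):=\mathsf{1}_{B^{*}}(x)/\bigl[|x|\bigl(\log|x|^{-1}\bigr)^{\ell(1+\beta)}\bigr]$. Since the integrand is already separated over space and time, the standard uniform heat-kernel bound $\|p_{s}^{\alpha}\|_{\infty}\leq C_{\alpha}s^{-1/\alpha}$ (valid in $d=1$) gives $\|S_{s}^{\alpha}\psi\|_{\infty}\leq C_{\alpha}s^{-1/\alpha}\|\psi\|_{1}$, whence
\[
\mathbf{E}[\hat{N}(A)]\leq C\,c^{-1-\beta}\|\mu\|\,\|\psi\|_{1}\!\int_{I_{t}}\!\frac{\mathrm{e}^{as}\,s^{-1/\alpha}}{(t-s)\bigl(\log(t-s)^{-1}\bigr)^{\ell(1+\beta)}}\,\mathrm{d}s.
\]

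It remains to verify that the two resulting one-dimensional integrals are finite, and this is precisely where the choice of $\ell$ enters. Since $\ell(1+\beta)=1+q(1+\beta)>1$, the substitution $u=\log|x|^{-1}$ reduces $\|\psi\|_{1}$ to $2\int_{1}^{\infty}u^{-\ell(1+\beta)}\mathrm{d}u<\infty$, and the time integral is integrable near $s=t$ for the very same reason; integrability near $s=0$ (only relevant when $t\leq1$) is ensured by $1/\alpha<1$, a consequence of $\alpha>1+\beta>1$. Collecting these estimates gives $\mathbf{E}[\hat{N}(A)]\leq C(\varepsilon,q,t,\mu)\,c^{-1-\beta}$, so the lemma follows by choosing $c_{(\ref{in6.1a})}$ sufficiently large. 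The most delicate aspect is the tight matching of exponents: the L\'{e}vy intensity $r^{-2-\beta}\mathrm{d}r$ forces the power $g^{-(1+\beta)}$ into the integrand, and the choice $\ell=\frac{1}{1+\beta}+q$ is essentially the smallest one making both logarithmic singularities integrable, with the slack $q>0$ supplying the strict inequality $\ell(1+\beta)>1$.
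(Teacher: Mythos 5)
Your proof is correct and follows essentially the same route as the paper: bound the probability by the expected number of jumps via Markov, pass to the compensator, carry out the $r$-integration, invoke the first-moment formula together with the uniform bound $\|p_s^\alpha\|_\infty\le C s^{-1/\alpha}$, and observe that $\ell(1+\beta)=1+q(1+\beta)>1$ makes both logarithmic integrals converge. The only cosmetic difference is that you make the domain restriction $s\in(\max(t-1,0),t)$ explicit, whereas the paper leaves it implicit.
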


\begin{proof}
For any $c>0$ (later to be specialized to some $c_{(\ref{in6.1a})})$ set
\[
Y\,:=\,N\Bigl((s,x,r):\;(s,x)\in(0,t)\times B_{1/\mathrm{e}}(0),\ r\geq
c\,\bigl((t-s)|x|\bigr)^{1/(1+\beta)}(f_{s,x})^{\ell}\Bigr).
\]
Clearly,%
\begin{equation}%
\begin{array}
[c]{c}%
\displaystyle
\mathbf{P}\Big(\Delta X_{s}(x)>c\,\bigl((t-s)|x|\bigr)^{1/(1+\beta)}%
(f_{s,x})^{\ell}\text{ for some }s<t\ \text{and}\ x\in B_{1/\mathrm{e}%
}(0)\Big)\vspace{6pt}\\
=\,\mathbf{P}(Y\geq1)\,\leq\,\mathbf{E}Y,
\end{array}
\label{6.20}%
\end{equation}
where in the last step we have used the classical Markov inequality.
\textrm{F}rom (\ref{decomp}),
\begin{align}
\mathbf{E}Y\,  &  =\,\varrho\,\mathbf{E}\int_{0}^{t}\mathrm{d}s\int
_{\mathsf{R}}X_{s}(\mathrm{d}x)\,\mathsf{1}_{B_{1/\mathrm{e}}(0)}%
(x)\int_{c\,\bigl((t-s)|x|\bigr)^{1/(1+\beta)}(f_{s,x})^{\ell}}^{\infty
}\mathrm{d}r\ r^{-2-\beta}\nonumber\\
&  =\,\varrho\,\frac{c^{-1-\beta}}{1+\beta}\int_{0}^{t}\mathrm{d}%
s\ (t-s)^{-1}\log^{-1-q(1+\beta)}\bigl((t-s)^{-1}\bigr)\\
&  \hspace{1cm}\times\int_{\mathsf{R}}\mathbf{E}X_{s}(\mathrm{d}%
x)\,\mathsf{1}_{B_{1/\mathrm{e}}(0)}(x)\,|x|^{-1}\log^{-1-q(1+\beta
)}\bigl(|x|^{-1}\bigr).\nonumber
\end{align}
Now, writing $C$ for a generic constant (which may change from place to
place),%
\begin{align}
&  \int_{\mathsf{R}}\,\mathbf{E}X_{s}(\mathrm{d}x)\,\mathsf{1}%
_{B_{1/\mathrm{e}}(0)}(x)\,|x|^{-1}\log^{-1-q(1+\beta)}\bigl(|x|^{-1}%
\bigr)\,\nonumber\\
&  \leq\ \mathrm{e}^{|a|t}\int_{\mathsf{R}}\mu(\mathrm{d}y)\!\int_{\mathsf{R}%
}\mathrm{d}x\ p_{s}^{\alpha}(x-y)\,\mathsf{1}_{B_{1/\mathrm{e}}(0)}%
(x)\,|x|^{-1}\log^{-1-q(1+\beta)}\bigl(|x|^{-1}\bigr)\nonumber\\
&  \leq\ C\,\mu(\mathsf{R})\,s^{-1/\alpha}\!\int_{\mathsf{R}}\mathrm{d}%
x\ \mathsf{1}_{B_{1/\mathrm{e}}(0)}(x)\,|x|^{-1}\log^{-1-q(1+\beta
)}\bigl(|x|^{-1}\bigr)\nonumber\\
&  =:\ c_{(\ref{6.3})}s^{-1/\alpha}, \label{6.3}%
\end{align}
where $\,c_{(\ref{6.3})}=c_{(\ref{6.3})}(q)$\thinspace\ (recall that $t$ is
fixed). Consequently,%
\begin{align}
\mathbf{E}Y\,  &  \leq\,\varrho\,c_{(\ref{6.3})}\,c^{-1-\beta}\int_{0}%
^{t}\mathrm{d}s\text{\thinspace}s^{-1/\alpha}\,(t-s)^{-1}\log^{-1-q(1+\beta
)}\bigl((t-s)^{-1}\bigr)\,\nonumber\\[1pt]
&  =:\,c_{(\ref{6.4})}\,c^{-1-\beta} \label{6.4}%
\end{align}
with $\,c_{(\ref{6.4})}=c_{(\ref{6.4})}(q).$\thinspace\ Choose now $c\,\ $such
that the latter expression equals $\varepsilon$ and write $c_{(\ref{in6.1a})}$
instead of $\,c.$ Recalling (\ref{6.20}), the proof is complete.
\end{proof}

Since $\,\sup_{0<y<1}y^{\gamma}\log^{\ell}\frac{1}{y}<\infty$\thinspace\ for
every $\gamma>0$, we get from Lemma~\ref{6.00} the following statement.

\begin{corollary}
[\textbf{A jump mass estimate}]\label{6.1}Fix $\,\,X_{0}=\mu\in\mathcal{M}%
_{\mathrm{f}}\backslash\{0\}.$\thinspace\ Suppose\/ $\,d=1$\thinspace\ and
$\,\alpha>1+\beta.$\thinspace\ Let \thinspace$\varepsilon>0$\thinspace\ and
$\,\gamma\in\bigl(0,(1+\beta)^{-1}\bigr).$\thinspace\ There exists a constant
\thinspace$c_{(\ref{in6.1})}=c_{(\ref{in6.1})}(\varepsilon,\gamma)$ such that
\begin{equation}
\mathbf{P}\Big(\Delta X_{s}(x)>c_{(\ref{in6.1})}%
\,\bigl((t-s)|x|\bigr)^{\lambda}\text{ for some }s<t\ \text{and}\ x\in
B_{2}(0)\Big)\leq\,\varepsilon, \label{in6.1}%
\end{equation}
where
\begin{equation}
\lambda\,:=\,\frac{1}{1+\beta}-\gamma. \label{def.lambda}%
\end{equation}
{}
\end{corollary}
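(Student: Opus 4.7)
The plan is to invoke Lemma~\ref{6.00} on the smaller ball $B_{1/\mathrm{e}}(0)$ and then extend the resulting estimate to $B_2(0)$ by a separate first-moment calculation on the annulus $A := B_2(0)\setminus B_{1/\mathrm{e}}(0)$, in the spirit of the lemma's own proof.

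For the inner ball I would apply Lemma~\ref{6.00} with $\varepsilon/2$ in place of $\varepsilon$ and some fixed $q > 0$, yielding, on an event of probability at least $1-\varepsilon/2$, the bound $\Delta X_s(x) \leq c_0 \bigl((t-s)|x|\bigr)^{1/(1+\beta)}(f_{s,x})^{\ell}$ uniformly over $s<t$, $x\in B_{1/\mathrm{e}}(0)$. Rewriting the right-hand side as
\[
c_0\bigl((t-s)|x|\bigr)^{\lambda}\cdot\bigl[(t-s)^{\gamma}\log^{\ell}\bigl((t-s)^{-1}\bigr)\bigr]\cdot\bigl[|x|^{\gamma}\log^{\ell}\bigl(|x|^{-1}\bigr)\bigr]
\]
and invoking the hint $\sup_{0<y<1}y^{\gamma}\log^{\ell}(1/y)<\infty$ for each of the two bracketed factors (automatic for $|x|<1/\mathrm{e}$; for $s\in[0,(t-1)_+]$ in the case $t>1$ the corresponding temporal factor is bounded on a compact set) absorbs both factors into a single constant and produces $\Delta X_s(x)\leq c_1\bigl((t-s)|x|\bigr)^{\lambda}$ for $x\in B_{1/\mathrm{e}}(0)$ on this event.

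For the annulus $A$ I would mirror the Markov first-moment argument of Lemma~\ref{6.00}: set
\[
Y_A := N\Bigl((s,x,r):\; s<t,\ x\in A,\ r\geq c\bigl((t-s)|x|\bigr)^{\lambda}\Bigr),
\]
apply $\mathbf{P}(Y_A\geq 1)\leq\mathbf{E}Y_A$, and compute the expectation through the compensator~(\ref{decomp}). The $r$-integral produces the factor $c^{-1-\beta}(t-s)^{-\lambda(1+\beta)}|x|^{-\lambda(1+\beta)}$. Since $\lambda(1+\beta)=1-\gamma(1+\beta)<1$ and $|x|\in[1/\mathrm{e},2]$ on $A$, the spatial factor $|x|^{-\lambda(1+\beta)}$ is uniformly bounded, so the spatial integral is dominated by a multiple of $\mathbf{E}X_s(A)\leq\mathrm{e}^{|a|t}\mu(\mathsf{R})$; the temporal integral $\int_0^t(t-s)^{-1+\gamma(1+\beta)}\mathrm{d}s$ is finite because $\gamma(1+\beta)>0$. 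Thus $\mathbf{E}Y_A\leq C\,c^{-1-\beta}$, and choosing $c$ large enough forces this to be at most $\varepsilon/2$. Taking $c_{(\ref{in6.1})}$ to be the larger of $c_1$ and this $c$, and intersecting the two favorable events, completes the proof.

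There is no serious obstacle: the hint tames the logarithmic singularities at $t-s=0$ and at $x=0$ exactly as required, and the $\gamma$-surplus in the exponent $\lambda$ guarantees that the annular first-moment integrals converge without log corrections. The only piece of bookkeeping worth flagging is the uniform boundedness of $(t-s)^{\gamma}\log^{\ell}((t-s)^{-1})$ on $s\in(0,t)$ when $t>1$, which is resolved by splitting the $s$-range at $t-s=1$.
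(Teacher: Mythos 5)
Your proposal is correct, and it is in fact more careful than the paper's one-sentence derivation. The paper simply observes that $\sup_{0<y<1}y^{\gamma}\log^{\ell}(1/y)<\infty$ and declares the corollary to follow from Lemma~\ref{6.00}; this absorbs the $(f_{s,x})^{\ell}$ factor into a constant exactly as you do for the inner ball $B_{1/\mathrm{e}}(0)$, since $1/(1+\beta)=\lambda+\gamma$ splits the power into $\bigl((t-s)|x|\bigr)^\lambda$ times the two bounded bracket factors. However, Lemma~\ref{6.00} is stated only on $B_{1/\mathrm{e}}(0)$, whereas the corollary asserts the bound on $B_2(0)$; the annulus $B_2(0)\setminus B_{1/\mathrm{e}}(0)$ is not covered by the lemma and genuinely requires a separate estimate, which the paper leaves implicit. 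Your Markov/first-moment computation over that annulus (using the compensator $\hat N$, that $\lambda(1+\beta)=1-\gamma(1+\beta)<1$ so the $s$-integral converges, and that $|x|^{-\lambda(1+\beta)}$ is bounded away from $0$ and $\infty$ on the annulus) is precisely the argument the paper should be invoking but does not spell out. The only small imprecision in your write-up is the aside about $t>1$: for $t-s\geq 1$ the quantity $f_{s,x}$ is nonpositive and $(f_{s,x})^{\ell}$ is not even defined for fractional $\ell$, so Lemma~\ref{6.00} itself tacitly needs $t\leq 1$; but the paper assumes $t\leq 1$ without loss of generality in Section~3, so this is inherited rather than introduced by you. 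Taking $c_{(\ref{in6.1})}$ as the maximum of your two constants and intersecting the two good events gives the corollary.
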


Several times we will use the following estimate concerning the $\alpha
$-stable transition kernel $p^{\alpha}$ taken from \cite[Lemma~2.1]%
{FleischmannMytnikWachtel2010.optimal.Ann}.

\begin{lemma}
[$\alpha$\textbf{-stable density increment}]\label{L1old} For every $\delta
\in\lbrack0,1],$
\begin{equation}
\bigl|p_{t}^{\alpha}(x)-p_{t}^{\alpha}(y)\bigr|\,\leq\,C\,\frac{|x-y|^{\delta
}}{t^{\delta/\alpha}}\,\bigl(p_{t}^{\alpha}(x/2)+p_{t}^{\alpha}%
(y/2)\bigr),\quad t>0,\ \,x,y\in\mathsf{R}. \label{L1.1}%
\end{equation}
{}
\end{lemma}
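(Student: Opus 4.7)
The plan is to reduce the bound to the self-similar scale $t=1$ via scaling, establish (\ref{L1.1}) at the endpoints $\delta=0$ and $\delta=1$ separately, and then interpolate geometrically. Using the one-dimensional scaling identity $p_t^\alpha(x) = t^{-1/\alpha} p_1^\alpha(t^{-1/\alpha}x)$, a direct substitution turns (\ref{L1.1}) for arbitrary $t>0$ into its $t=1$ special case; the factor $t^{-\delta/\alpha}$ on the right-hand side absorbs exactly the scaling of $|x-y|^\delta$.

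For $t=1$ I would invoke the classical pointwise two-sided density bound and one-sided derivative bound for the symmetric one-dimensional $\alpha$-stable density,
\[
c\,(1+|z|)^{-1-\alpha} \,\leq\, p_1^\alpha(z) \,\leq\, C\,(1+|z|)^{-1-\alpha}, \qquad \bigl|(p_1^\alpha)'(z)\bigr| \,\leq\, C\,(1+|z|)^{-2-\alpha},
\]
valid for all $\alpha\in(0,2]$ (trivial for $\alpha=2$ from the explicit Gaussian, and standard from the Fourier series expansion for $\alpha\in(0,2)$). An immediate consequence is the doubling inequality $p_1^\alpha(z) \leq C'\, p_1^\alpha(z/2)$, from which the $\delta=0$ instance of (\ref{L1.1}) follows by the triangle inequality.

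For the $\delta=1$ instance I would split on the size of $|x-y|$. In the ``near'' regime $|x-y|\leq \tfrac12(1+|x|\vee|y|)$, every $\xi$ on the segment $[x,y]$ satisfies both $1+|\xi|\geq \tfrac12(1+|x|)$ and $1+|\xi|\geq\tfrac12(1+|y|)$, so the mean value theorem together with the derivative bound and the density lower bound gives
\[
|p_1^\alpha(x)-p_1^\alpha(y)|\,\leq\,|x-y|\sup_{\xi\in[x,y]}|(p_1^\alpha)'(\xi)|\,\leq\, C\,|x-y|\,p_1^\alpha(x/2).
\]
In the complementary ``far'' regime $|x-y|>\tfrac12(1+|x|\vee|y|)\geq \tfrac12$, the $\delta=0$ bound multiplied by $2|x-y|\geq 1$ already yields the $\delta=1$ claim. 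For general $\delta\in(0,1)$ the factorisation $|A-B|=|A-B|^\delta\cdot|A-B|^{1-\delta}$ applied with $A=p_1^\alpha(x)$, $B=p_1^\alpha(y)$ and the two endpoint estimates yields (\ref{L1.1}) with constant $C_1^\delta C_0^{1-\delta}\leq \max(C_0,C_1)$, uniformly bounded in $\delta\in[0,1]$.

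The main technical point is the $\delta=1$ mean-value step, namely dominating $|(p_1^\alpha)'(\xi)|$ at an intermediate point $\xi\in[x,y]$ by the densities at $x/2$ and $y/2$. The factor $1/2$ in the statement of (\ref{L1.1}) is precisely what provides the necessary geometric slack: it ensures that $(1+|x/2|)^{-1-\alpha}$ dominates $(1+|\xi|)^{-1-\alpha}$ (up to a universal constant) throughout the near regime, while the extra decay $(1+|\xi|)^{-1}$ in the derivative bound beyond the density bound is absorbed by the very condition $|x-y|\leq\tfrac12(1+|x|\vee|y|)$ defining that regime.
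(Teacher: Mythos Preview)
The paper does not prove this lemma; it merely quotes it from \cite[Lemma~2.1]{FleischmannMytnikWachtel2010.optimal.Ann}. Your self-contained argument via scaling, the endpoint cases $\delta=0,1$, and geometric interpolation is sound and is in fact the natural proof.

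One small correction: the two-sided polynomial bound $p_1^\alpha(z)\asymp(1+|z|)^{-1-\alpha}$ that you invoke does \emph{not} hold for $\alpha=2$, since the Gaussian decays super-polynomially and the lower bound fails. Your parenthetical ``trivial for $\alpha=2$'' is therefore misplaced if it is meant to justify those bounds. The fix is immediate, however: for $\alpha=2$ one has directly $\bigl|(p_1^2)'(z)\bigr|=\tfrac{|z|}{2}\,p_1^2(z)\leq C\,p_1^2(z/2)$, and in the near regime $|x-y|\leq\tfrac12(1+|x|\vee|y|)$ one checks (same-sign and opposite-sign cases separately, the latter forcing $|x|\vee|y|\leq 1$) that $p_1^2(\xi/2)\leq C\bigl(p_1^2(x/2)+p_1^2(y/2)\bigr)$ for any $\xi$ between $x$ and $y$. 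With this adjustment your near/far split and interpolation go through unchanged for all $\alpha\in(0,2]$.
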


In the proof of our main result we need also a further technical result we
quote from \cite[Lemma~2.3]{FleischmannMytnikWachtel2010.optimal.Ann}. Let
$L=\{L_{t}:\,t\geq0\}$ denote a spectrally positive stable process of index
$\kappa\in(1,2).$ Per definition, $L$ is an $\mathsf{R}$-valued
time-homogeneous process with independent increments and with Laplace
transform given by%
\begin{equation}
\mathbf{E}\,\mathrm{e}^{-\lambda L_{t}}\,=\,\mathrm{e}^{t\lambda^{\kappa}%
},\quad\lambda,t\geq0. \label{Laplace}%
\end{equation}
Note that $L$ is the unique (in law) solution to the following martingale
problem:%
\begin{equation}
t\mapsto\mathrm{e}^{-\lambda L_{t}}-\int_{0}^{t}\!\mathrm{d}s\ \mathrm{e}%
^{-\lambda L_{s}}\lambda^{\kappa}\,\ \text{is a martingale for any}%
\,\ \lambda>0. \label{MP}%
\end{equation}

Let $\,\Delta L_{s}:=L_{s}-L_{s-}>0$\thinspace\ denote the jumps of $\,L.$

\begin{lemma}
[\textbf{Big values of the process in the case of bounded jumps}]%
\label{L3}\quad\newline We have
\begin{equation}
\mathbf{P}\Bigl(\,\sup_{0\leq u\leq t}L_{u}\mathsf{1}\bigl\{\sup_{0\leq v\leq
u}\Delta L_{v}\leq y\bigr\}\geq x\Bigr)\,\leq\,\Bigl(\frac{C\,t}{xy^{\kappa
-1}}\Bigr)^{\!x/y},\quad t>0,\ \,x,y>0.
\end{equation}
{}
\end{lemma}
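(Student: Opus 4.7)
The plan is to dominate the event by the tail of the martingale obtained from $L$ by deleting all jumps of size exceeding $y$, and then to apply an exponential-martingale maximal inequality with a carefully optimized parameter. Split the Poisson measure $N$ of jumps of $L$ at level $y$ and set
\begin{equation*}
\tilde L^{(y)}_u \,:=\, \int_0^u\!\int_0^y r\,(N-\hat N)(ds,dr),\qquad R^{(y)}_u \,:=\, L_u-\tilde L^{(y)}_u.
\end{equation*}
Then $\tilde L^{(y)}$ and $R^{(y)}$ are independent L\'evy processes: $\tilde L^{(y)}$ is a square-integrable martingale with jumps bounded by $y$, while $R^{(y)}$ equals a compound Poisson sum of jumps of size $>y$ minus the deterministic compensator $u\int_y^\infty r\,\nu(dr)$ (finite because $\kappa>1$, where $\nu(dr)=c_\kappa r^{-1-\kappa}\,dr$ is the L\'evy measure of $L$). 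On $\{\sup_{v\leq u}\Delta L_v\leq y\}$ the compound Poisson part of $R^{(y)}$ vanishes, so $R^{(y)}_u\leq 0$ and hence $L_u\leq\tilde L^{(y)}_u$; it therefore suffices to bound $\mathbf P\bigl(\sup_{u\leq t}\tilde L^{(y)}_u\geq x\bigr)$.

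Since $\tilde L^{(y)}$ has jumps uniformly bounded by $y$, for every $\theta>0$ the process $\exp(\theta\tilde L^{(y)}_u-u\psi(\theta))$ is a positive martingale with Laplace exponent $\psi(\theta)=\int_0^y(e^{\theta r}-1-\theta r)\,\nu(dr)$. Doob's maximal inequality then yields
\begin{equation*}
\mathbf P\Bigl(\sup_{u\leq t}\tilde L^{(y)}_u\geq x\Bigr)\,\leq\,\exp\bigl(-\theta x+t\psi(\theta)\bigr).
\end{equation*}
The substitution $u=\theta r$ rewrites $\psi(\theta)=c_\kappa\theta^\kappa\int_0^{\theta y}(e^u-1-u)u^{-1-\kappa}\,du$, and a brief integration-by-parts argument shows that for $\theta y\geq 1$ the integral is dominated by its upper-endpoint contribution, giving the sharp estimate $\psi(\theta)\leq C\,e^{\theta y}/(\theta y^{1+\kappa})$.

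Now optimize by choosing $\theta = y^{-1}\log A$ with $A:=xy^{\kappa-1}/(Ct)$; one may assume $\log A$ exceeds a fixed constant, for otherwise the claimed bound is $\geq 1$ and vacuous. Then $-\theta x=-(x/y)\log A$, which is precisely the logarithm of the target, while a direct substitution yields $t\psi(\theta)\leq C'(x/y)/\log A$, of lower order. Absorbing the resulting multiplicative loss into the constant $C$ appearing inside $A$ delivers the claimed bound $(Ct/(xy^{\kappa-1}))^{x/y}$.

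The main obstacle is the sharpness of the Chernoff estimate. The crude Taylor bound $e^{\theta r}-1-\theta r\leq\tfrac12\theta^2 r^2 e^{\theta y}$ gives only $\psi(\theta)\leq C\theta^2 e^{\theta y}y^{2-\kappa}$, which forces $\theta y\lesssim 1$ and yields merely a Gaussian-type tail in $x$; this is too weak to reproduce the power-law decay asserted by the lemma. The decisive ingredient is that in the regime $\theta y\gg 1$ the Laplace exponent is really governed by jumps of size comparable to $y$, so that $\psi(\theta)\lesssim e^{\theta y}/(\theta y^{1+\kappa})$, which permits the choice $\theta y=\log A$ and hence the exponent $x/y$ in the final bound.
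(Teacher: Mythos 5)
The paper does not actually prove this lemma; it quotes it verbatim from Lemma~2.3 of \cite{FleischmannMytnikWachtel2010.optimal.Ann}, so there is no in-paper proof to compare against. Your argument is correct, and it is the standard route for bounds of this form: split the Poisson jump measure at level $y$, observe that on $\bigl\{\sup_{v\leq u}\Delta L_v\leq y\bigr\}$ the large-jump part $R^{(y)}_u$ reduces to its nonpositive compensator drift so that $L_u\leq\tilde L^{(y)}_u$, then run a Doob/Chernoff argument on the exponential martingale of the bounded-jump piece $\tilde L^{(y)}$. The decisive step you correctly identify is the estimate $\psi(\theta)=c_\kappa\theta^\kappa\int_0^{\theta y}(e^u-1-u)u^{-1-\kappa}\,du\leq C\,e^{\theta y}/(\theta y^{1+\kappa})$ in the regime $\theta y\gtrsim 1$, which gives the Poisson-type (rather than Gaussian) tail; the naive quadratic bound would lose the exponent $x/y$. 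The optimization $\theta y=\log A$ with $A=xy^{\kappa-1}/(Ct)$, restricted to $\log A$ bounded below and otherwise using that the claimed bound is vacuous, then yields the stated estimate after enlarging the constant. The absorption of the residual factor $\exp\bigl((x/y)/\log A\bigr)$ into the constant is slightly glossed over but does work (replace $C$ by, say, $e^{2}C$ and treat the two regimes $\log A\geq 2$ and $\log A<2$ separately). I see no gap.
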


\section{H\"{o}lder continuity at a given point: proof of
Theorem~$\mathrm{\ref{T.prop.dens.fixed}(a)}$\label{sec:6}}

We will use some ideas from the proofs in Section~3 of
\cite{FleischmannMytnikWachtel2010.optimal.Ann}. However, to be adopted to our
case, those proofs require significant changes. Let $\,d=1$\thinspace\ and fix
$\,t,z,\mu,\alpha,\beta,\eta$\thinspace\ as in the theorem. Consider an $x\in
B_{1}(z)$. Without loss of generality we will assume that $t\leq1$ and,
changing $\mu$ appropriately, that $z=0$ and $0<x<1$. By definition
(\ref{rep.dens}) of $Z_{t}^{2}$,%
\begin{equation}
Z_{t}^{2}(x)-Z_{t}^{2}(0)\,=\,\int_{(0,t]\times\mathsf{R}}\!M\!\left(
_{\!_{\!_{\,}}}\mathrm{d}(s,y)\right)  \varphi_{+}(s,y)-\int_{(0,t]\times
\mathsf{R}}\!M\!\left(  _{\!_{\!_{\,}}}\mathrm{d}(s,y)\right)  \varphi
_{-}(s,y),
\end{equation}
where $\varphi_{+}(s,y)$ and $\varphi_{-}(s,y)$ are the positive and negative
parts of $\,p_{t-s}^{\alpha}(y-x)-p_{t-s}^{\alpha}(y)$ (for the fixed $x).$ It
is easy to check that $\varphi_{+}$ and $\varphi_{-}$ satisfy the assumptions
in \cite[Lemma~2.15]{FleischmannMytnikWachtel2010.optimal.Ann}. Thus, there
exist spectrally positive stable processes $L^{+}$ and $L^{-}$ such that
\begin{equation}
Z_{t}^{2}(x)-Z_{t}^{2}(0)\,=\,L_{T_{+}}^{+}-L_{T_{-}}^{-}, \label{6.T1.1}%
\end{equation}
where $\,T_{\pm}$\thinspace$:=$\thinspace$\int_{0}^{t}\mathrm{d}%
s\int_{\mathsf{R}}X_{s}(\mathrm{d}y)\,\bigl(\varphi_{\pm}(s,y)\bigr)^{1+\beta
}.$\thinspace\ Fix any%
\begin{equation}
\varepsilon\in\Big(0,\,\frac{\,1}{3}\Big)\quad\text{and}\quad\gamma
\in\Big(0,\,\min\Big\{\frac{\eta_{c}}{2\alpha}\,,\,\frac{\bar{\eta}_{c}%
}{2(2\alpha+1)}\Big\}\,\Big).
\end{equation}
Also fix some $J=J(\gamma)$ and
\begin{equation}
0\,=:\,\rho_{0}\,<\,\rho_{1}\,<\,\cdots\,<\,\rho_{J}\,:=\,1/\alpha
\end{equation}
such that
\begin{equation}
\rho_{\ell}\,(\alpha+1)-\frac{\rho_{\ell+1}}{1+\beta}\,\geq\,-\frac{\gamma}%
{2}\,,\quad0\leq\ell\leq J-1. \label{6.6}%
\end{equation}
According to \cite[Lemma~2.11]{FleischmannMytnikWachtel2010.optimal.Ann},
there exists a constant $c_{\varepsilon}$ such that
\begin{equation}
\mathbf{P}(V\leq c_{\varepsilon})\geq1-\varepsilon, \label{def_ve}%
\end{equation}
where%
\begin{equation}
V\,:=\,\sup_{0\leq s\leq t,\,y\in B_{2}(0)}S_{2^{\alpha}(t-s)}X_{s}\,(y)
\end{equation}
(note that there is no difference in using $B_{2}(0)$ or its closure for
taking the supremum). By Lemma~\ref{6.1} we can fix $c_{(\ref{in6.1})}$
sufficiently large such that the probability of the event
\begin{equation}
A^{\varepsilon,1}\,:=\,\Big\{\Delta X_{s}(y)\leq c_{(\ref{in6.1}%
)}\,\bigl((t-s)|y|\bigr)^{\lambda}\text{ for all }s<t\ \text{and}\ y\in
B_{2}(0)\Big\} \label{6.A.eps}%
\end{equation}
is larger than $1-\varepsilon$. Moreover, according to \cite[Lemma~2.14]%
{FleischmannMytnikWachtel2010.optimal.Ann}, there exists a constant $c^{\ast
}=c^{\ast}(\varepsilon,\gamma)$ such that the probability of the event
\begin{equation}
A^{\varepsilon,2}\,:=\,\Big\{\Delta X_{s}(y)\leq c^{\ast}(t-s)^{\lambda}\text{
for all }s<t\ \text{and }y\in\mathsf{R}\Big\} \label{6.A.eps2}%
\end{equation}
is larger than $1-\varepsilon$. Set
\begin{equation}
A^{\varepsilon}\,:=\,\,A^{\varepsilon,1}\cap A^{\varepsilon,2}\cap\{V\leq
c_{\varepsilon}\}. \label{6.A1.eps}%
\end{equation}
Evidently,
\begin{equation}
\mathbf{P}(A^{\varepsilon})\geq1-3\varepsilon. \label{6.A.eps.est}%
\end{equation}

Define $Z_{t}^{2,\varepsilon}:=Z_{t}^{2}\,\mathsf{1}(A^{\varepsilon}%
).$\thinspace\ We first show that $Z_{t}^{2,\varepsilon}$ has a version which
is locally H\"{o}lder continuous of all orders $\eta$ less than $\bar{\eta
}_{\mathrm{c}\,}.$\thinspace\ It follows from (\ref{6.T1.1}) that,\textbf{
}for any $k>0$,
\begin{align}
&  \mathbf{P}\Big(\bigl|Z_{t}^{2,\varepsilon}(x)-Z_{t}^{2,\varepsilon
}(0)\bigr|\geq2k\,x^{\eta}\Big)\nonumber\\
\,  &  \leq\,\mathbf{P}\bigl(L_{T_{+}}^{+}\geq kx^{\eta},\,A^{\varepsilon
}\bigr)+\mathbf{P}\bigl(L_{T_{-}}^{-}\geq kx^{\eta},\,A^{\varepsilon}\bigr).
\label{6.T1.2}%
\end{align}
Define:
\begin{equation}
\tilde{D}_{0}\,:=\,\Big\{(s,y)\in\lbrack0,t)\times B_{2}(0):\;y\in
\bigl(-2(t-s)^{1/\alpha-\rho_{1}},\,x+2(t-s)^{1/\alpha-\rho_{1}}\bigr)\Big\}
\end{equation}
and, for $\,1\leq\ell\leq J-1,$%
\[
\tilde{D}_{\ell}\,:=\,\Big\{(s,y)\in\lbrack0,t)\times B_{2}(0):\;y\in
\bigl(-2(t-s)^{1/\alpha-\rho_{\ell+1}},\,x+2(t-s)^{1/\alpha-\rho_{\ell+1}%
}\bigr)\Big\}.
\]
Moreover,%
\begin{equation}
D_{0}:=\tilde{D}_{0}\quad\text{and}\quad D_{\ell}\,:=\,\tilde{D}_{\ell
}\setminus\tilde{D}_{\ell-1\,},\quad1\leq\ell\leq J-1.
\end{equation}
Note that
\begin{equation}
\lbrack0,t)\times B_{2}(0)\,=\,\bigcup_{0\leq\ell<J}D_{\ell\,}.
\end{equation}
If the jumps of \thinspace$M\!\left(  _{\!_{\!_{\,}}}\mathrm{d}(s,y)\right)  $
do not exceed $c_{(\ref{in6.1})}\,\bigl((t-s)|y|\bigr)^{\lambda}$ on
$D_{\ell\,},$\thinspace\ then the jumps of the process $\,u\mapsto$
$\int_{(0,u]\times D_{\ell}}\!M\!\left(  _{\!_{\!_{\,}}}\mathrm{d}%
(s,y)\right)  \varphi_{\pm}(s,y)$ are bounded by%
\begin{equation}
c_{(\ref{in6.1})}\,\sup_{(s,y)\in D_{\ell}}\bigl((t-s)|y|\bigr)^{\lambda
}\,\varphi_{\pm}(s,y). \label{6.T1.3}%
\end{equation}
For $0\leq\ell<J,$ put
\begin{align}
D_{\ell,1}\,  &  :=\,\bigl\{(s,y)\in D_{\ell}:\;(t-s)^{1/\alpha-\rho_{\ell+1}%
}\leq x\bigr\},\\
D_{\ell,2}\,  &  :=\,\bigl\{(s,y)\in D_{\ell}:\;(t-s)^{1/\alpha-\rho_{\ell+1}%
}>x\bigr\},\nonumber\\
D_{\ell,1}(s)\,  &  :=\,\left\{  _{\!_{\!_{\,}}}y\in B_{2}(0):(s,y)\in
D_{\ell,1}\right\}  ,\quad s\in\lbrack0,t),\nonumber\\
D_{\ell,2}(s)\,  &  :=\,\left\{  _{\!_{\!_{\,}}}y\in B_{2}(0):(s,y)\in
D_{\ell,2}\right\}  ,\quad s\in\lbrack0,t).\nonumber
\end{align}
Since obviously $D_{\ell}=D_{\ell,1}\cup D_{\ell,2}$ we get that~(\ref{6.T1.3}%
) is bounded by
\begin{align}
\lefteqn{c_{(\ref{in6.1})}\,\sup_{0<s<t}(t-s)^{\lambda}\sup_{y\in D_{\ell
,1}(s)}|y|^{\lambda}\,\varphi_{\pm}(s,y)}\nonumber\\
&  \qquad+c_{(\ref{in6.1})}\,\sup_{0<s<t}(t-s)^{\lambda}\sup_{y\in D_{\ell
,2}(s)}|y|^{\lambda}\,\varphi_{\pm}(s,y)\,=:\,c_{(\ref{in6.1})}(I_{1}+I_{2}).
\label{6.T1.10}%
\end{align}
Clearly,
\begin{equation}
\varphi_{\pm}(s,y)\,\leq\,\big|p_{t-s}^{\alpha}(y-x)-p_{t-s}^{\alpha
}(y)\big|,\quad\text{for all }s,y. \label{6.T1.4}%
\end{equation}

First let us bound $I_{1\,}.$\thinspace\ Note that for any $(s,y)\in
D_{\ell,1\,},$
\begin{equation}
|y|\,\leq\,x+2\,(t-s)^{1/\alpha-\rho_{\ell+1}}\,\leq\,3\,x.
\end{equation}
Therefore we have
\begin{equation}
I_{1}\,\leq\,3^{\lambda}\,x^{\lambda}\,\sup_{0<s<t}(t-s)^{\lambda}\sup_{y\in
D_{\ell,1}(s)}\,\big|p_{t-s}^{\alpha}(y-x)-p_{t-s}^{\alpha}(y)\big|.
\label{6.10}%
\end{equation}
Using Lemma~\ref{L1old} with $\delta=\eta_{\mathrm{c}}-2\alpha\gamma$ gives
\begin{align}
&  \sup_{y\in D_{\ell,1}(s)}\big|p_{t-s}^{\alpha}(y-x)-p_{t-s}^{\alpha
}(y)\big|\,\nonumber\\
&  \leq\,C\,x^{\eta_{\mathrm{c}}-2\alpha\gamma}\,(t-s)^{-\eta_{\mathrm{c}%
}/\alpha+2\gamma}\,\nonumber\\
&  \qquad\times\,\sup_{y\in D_{\ell,1}(s)}\Big(p_{t-s}^{\alpha}\left(
_{\!_{\!_{\,}}}(y-x)/2\right)  +p_{t-s}^{\alpha}\left(  y/2\right)
\!\Big)\nonumber\\
&  =\,C\,x^{\eta_{\mathrm{c}}-2\alpha\gamma}\,(t-s)^{-\eta_{\mathrm{c}}%
/\alpha+2\gamma-1/\alpha}\,\nonumber\\
&  \qquad\times\,\sup_{y\in D_{\ell,1}(s)}\Big(p_{1}^{\alpha}%
\bigl((t-s)^{-1/\alpha}(y-x)/2\bigr)+p_{1}^{\alpha}\bigl((t-s)^{-1/\alpha
}y/2\bigr)\Big). \label{star}%
\end{align}
Recall the following standard estimate on $p_{1}^{\alpha}$:
\begin{equation}
p_{1}^{\alpha}(y)\,\leq\,c_{(\ref{tail})}\,|y|^{-(\alpha+1)},\quad
y\in\mathsf{R}, \label{tail}%
\end{equation}
for some constant $c_{(\ref{tail})}.$ Thus on $D_{\ell,1}(s),$ we have
$\,|y|\geq2(t-s)^{1/\alpha-\rho_{\ell}},$\thinspace\ implying%
\begin{equation}
p_{1}^{\alpha}\bigl((t-s)^{-1/\alpha}y/2\bigr)\leq p_{1}^{\alpha
}\bigl((t-s)^{-\rho_{\ell}}\bigr)\leq c_{(\ref{tail})}\,(t-s)^{\rho_{\ell
}(\alpha+1)},
\end{equation}
where the last inequality follows by (\ref{tail}). A similar estimate holds
for the second $p_{1}^{\alpha}$-expression in (\ref{star}). Thus, (\ref{star})
yields%
\begin{equation}
\sup_{y\in D_{\ell,1}(s)}\big|p_{t-s}^{\alpha}(y-x)-p_{t-s}^{\alpha
}(y)\big|\leq\,C\,x^{\eta_{\mathrm{c}}-2\alpha\gamma}\,(t-s)^{-\eta
_{\mathrm{c}}/\alpha+2\gamma-1/\alpha+\rho_{\ell}(\alpha+1)}. \label{yields}%
\end{equation}
Now let us check that
\begin{equation}
\sup_{0<s<t}\,(t-s)^{\lambda}\,(t-s)^{-\eta_{\mathrm{c}}/\alpha+2\gamma
-1/\alpha+\rho_{\ell}(\alpha+1)}\,\leq\,1. \label{6.7}%
\end{equation}
Recall that $\eta_{\mathrm{c}}=\frac{\alpha}{1+\beta}-1$. Then one can easily
get that
\begin{equation}
\lambda-\eta_{\mathrm{c}}/\alpha+2\gamma-1/\alpha+\rho_{\ell}(\alpha
+1)\,=\,\gamma+\rho_{\ell}(\alpha+1)\,\geq\,\gamma, \label{6.8}%
\end{equation}
where the last inequality follows by~(\ref{6.6}). Therefore~(\ref{6.7})
follows immediately. Combining (\ref{6.10}), (\ref{yields}), and (\ref{6.7}),
we see that
\begin{equation}
I_{1}\,\leq\,C\,x^{\lambda+\eta_{\mathrm{c}}-2\alpha\gamma}\,\leq
\,C\,x^{\bar{\eta}_{\mathrm{c}}-(2\alpha+1)\gamma}, \label{6.16}%
\end{equation}
where we used the definitions of $\eta_{\mathrm{c}}$ and $\bar{\eta
}_{\mathrm{c}\,},$\thinspace\ given in (\ref{eta_c}) and
Theorem~\ref{T.prop.dens.fixed}(a), respectively.

Now let us bound $I_{2\,}.$\thinspace\ Note that for any $(s,y)\in
D_{\ell,2\,},$
\begin{equation}
|y|\,\leq\,x+2\,(t-s)^{1/\alpha-\rho_{\ell+1}}\,\leq\,3\,(t-s)^{1/\alpha
-\rho_{\ell+1}}.
\end{equation}
Therefore we have
\begin{equation}
I_{2}\,\leq\,3^{\lambda}\,\sup_{0<s<t}\!\Big((t-s)^{\lambda+(1/\alpha
-\rho_{\ell+1})\lambda}\,\sup_{y\in D_{\ell,2}(s)}\,\big|p_{t-s}^{\alpha
}(y-x)-p_{t-s}^{\alpha}(y)\big|\Big). \label{6.15}%
\end{equation}
Using again Lemma~\ref{L1old} but this time with $\delta=\bar{\eta
}_{\mathrm{c}}-(2\alpha+1)\gamma$ gives
\begin{align}
&  \sup_{y\in D_{\ell,2}(s)}\big|p_{t-s}^{\alpha}(y-x)-p_{t-s}^{\alpha
}(y)\big|\,\nonumber\\
&  \leq\,C\,x^{\bar{\eta}_{\mathrm{c}}-(2\alpha+1)\gamma}\,(t-s)^{-\bar{\eta
}_{\mathrm{c}}/\alpha+2\gamma+\gamma/\alpha}\,\nonumber\\[2pt]
&  \qquad\qquad\times\sup_{y\in D_{\ell,2}(s)}\Big(p_{t-s}^{\alpha}\left(
_{\!_{\!_{\,}}}(y-x)/2\right)  +p_{t-s}^{\alpha}\left(  y/2\right)
\!\Big)\nonumber\\
\,  &  =\,C\,x^{\bar{\eta}_{\mathrm{c}}-(2\alpha+1)\gamma}\,(t-s)^{-\bar{\eta
}_{\mathrm{c}}/\alpha+2\gamma+\gamma/\alpha-1/\alpha+\rho_{\ell}(\alpha+1)}.
\label{6.11}%
\end{align}
By definition (\ref{def.lambda}) of $\lambda,$
\begin{align}
&  \lambda+\Big(\frac{1}{\alpha}-\rho_{\ell+1}\Big)\lambda-\frac{\bar{\eta
}_{\mathrm{c}}}{\alpha}+2\gamma+\frac{\gamma}{\alpha}-\frac{1}{\alpha}%
+\rho_{\ell}(\alpha+1)\nonumber\\
&  =\,\frac{1}{\alpha}\Bigl(\frac{1+\alpha}{1+\beta}-1-\bar{\eta}_{\mathrm{c}%
}\Bigr)+\gamma+\gamma\rho_{\ell+1}-\frac{\rho_{\ell+1}}{1+\beta}+\rho_{\ell
}(\alpha+1)\nonumber\\
&  \geq\,\gamma/2 \label{6.13}%
\end{align}
where in the last step we used the definition of\thinspace\ $\bar{\eta
}_{\mathrm{c}}$ given in Theorem~\ref{T.prop.dens.fixed}(a), and~(\ref{6.6}).
Thus
\begin{equation}
\sup_{0<s<t}\,(t-s)^{\lambda+(1/\alpha-\rho_{\ell+1})\lambda-\bar{\eta
}_{\mathrm{c}}/\alpha+2\gamma+\gamma/\alpha-1/\alpha+\rho_{\ell}(\alpha
+1)}\,\leq\,1. \label{6.12}%
\end{equation}
Combining estimates (\ref{6.15}), (\ref{6.11}), and (\ref{6.12}), we obtain
\begin{equation}
I_{2}\,\leq\,C\,x^{\bar{\eta}_{\mathrm{c}}-(2\alpha+1)\gamma}. \label{6.14}%
\end{equation}

If the jumps of \thinspace$M\!\left(  _{\!_{\!_{\,}}}\mathrm{d}(s,y)\right)  $
are smaller than $c^{\ast}(t-s)^{\lambda}$ on $\mathsf{R}\setminus B_{2}(0)$
(where $c^{\ast}$ is from (\ref{6.A.eps2})), then the jumps of the process
$\,u\mapsto$ $\int_{(0,u]\times(\mathsf{R}\setminus B_{2}(0))}\!M\!\left(
_{\!_{\!_{\,}}}\mathrm{d}(s,y)\right)  \varphi_{\pm}(s,y)$ are bounded by
\begin{equation}
c^{\ast}(t-s)^{\lambda}\sup_{y\in\mathsf{R}\setminus B_{2}(0)}\varphi_{\pm
}(s,y). \label{V1}%
\end{equation}
Using Lemma~\ref{L1old} once again but this time with $\delta=\bar{\eta
}_{\mathrm{c}}-2\alpha\gamma$, we have
\begin{align}
\big|p_{t-s}^{\alpha}(y-x)-p_{t-s}^{\alpha}(y)\big|\,  &  \leq\,C\,x^{\bar
{\eta}_{\mathrm{c}}-2\alpha\gamma}\,(t-s)^{-\bar{\eta}_{\mathrm{c}}%
/\alpha+2\gamma}\nonumber\\
&  \quad\quad\times\Big(p_{t-s}^{\alpha}\left(  _{\!_{\!_{\,}}}(y-x)/2\right)
+p_{t-s}^{\alpha}\left(  y/2\right)  \!\Big). \label{V2}%
\end{align}
Since $0<x<1$,
\begin{align}
&  \sup_{y\in\mathsf{R}\setminus B_{2}(0)}\Big(p_{t-s}^{\alpha}\left(
_{\!_{\!_{\,}}}(y-x)/2\right)  +p_{t-s}^{\alpha}\left(  y/2\right)
\!\Big)\,\nonumber\\
&  \leq\,C\,(t-s)^{-1/\alpha}\,p_{1}^{\alpha}\bigl((t-s)^{-1/\alpha
}/2\bigr)\leq\,C\,(t-s). \label{V3}%
\end{align}
Therefore, (\ref{6.T1.4}), (\ref{V2}), and (\ref{V3}) imply
\begin{gather}
c^{\ast}(t-s)^{\lambda}\sup_{y\in\mathsf{R}\setminus B_{2}(0)}\varphi_{\pm
}(s,y)\leq C\,x^{\bar{\eta}_{\mathrm{c}}-2\alpha\gamma}\,(t-s)^{\lambda
-\bar{\eta}_{\mathrm{c}}/\alpha+2\gamma+1}\nonumber\\
\leq c_{(\ref{V4})}\,x^{\bar{\eta}_{\mathrm{c}}-2\alpha\gamma} \label{V4}%
\end{gather}
for some constant $c_{(\ref{V4})}=c_{(\ref{V4})}(\varepsilon).$ Here we have
used that $\bar{\eta}_{\mathrm{c}}\leq(1+\alpha)/(1+\beta)-1$ implies
$\lambda-\bar{\eta}_{\mathrm{c}}/\alpha+2\gamma+1\geq1$.

Combining (\ref{6.T1.3}), (\ref{6.T1.10}),~(\ref{6.16}), (\ref{6.14}), and
(\ref{V4}), we see that all jumps of the process $\,u\mapsto\int
_{(0,u]\times\mathsf{R}}\!M\!\left(  _{\!_{\!_{\,}}}\mathrm{d}(s,y)\right)
\varphi_{\pm}(s,y)$ on the set $A^{\varepsilon}$ are bounded by
\begin{equation}
c_{(\ref{C1})}\,x^{\bar{\eta}_{\mathrm{c}}-(2\alpha+1)\gamma} \label{C1}%
\end{equation}
for some constant $c_{(\ref{C1})}=c_{(\ref{C1})}(\varepsilon).$ Therefore, by
an abuse of notation writing $L$ for $L^{+}$ and $L^{-},$%
\begin{align}
&  \mathbf{P}\bigl(L_{T_{\pm}}\geq kx^{\eta},\,A^{\varepsilon}\bigr)\,\\[4pt]
&  =\,\mathbf{P}\Bigl(L_{T_{\pm}}\geq kx^{\eta},\ \sup_{0<u<T_{\pm}}\Delta
L_{u}\leq c_{(\ref{C1})}\,x^{\bar{\eta}_{\mathrm{c}}-(2\alpha+1)\gamma
},\,A^{\varepsilon}\Bigr)\nonumber\\
\,  &  \leq\,\mathbf{P}\biggl(\,\sup_{0<v\leq T_{\pm}}L_{v}\,\mathsf{1}%
\Big\{\sup_{0<u<v}\Delta L_{u}\leq c_{(\ref{C1})}\,x^{\bar{\eta}_{\mathrm{c}%
}-(2\alpha+1)\gamma}\Big\}\geq kx^{\eta},\,A^{\varepsilon}\biggr).\nonumber
\end{align}
Since
\begin{equation}
T_{\pm}\,\leq\,\int_{0}^{t}\mathrm{d}s\int_{\mathsf{R}}X_{s}(\mathrm{d}%
y)\,\big|p_{t-s}^{\alpha}(y-x)-p_{t-s}^{\alpha}(y)\big|^{1+\beta},
\end{equation}
applying \cite[Lemma~2.12]{FleischmannMytnikWachtel2010.optimal.Ann} with
\begin{equation}
\theta=1+\beta\quad\text{and}\quad\delta=\mathsf{1}_{\beta<(\alpha-1)/2}%
+\frac{\alpha-\beta-\varepsilon}{1+\beta}\ \mathsf{1}_{\beta\geq
(\alpha-1)/2\,},
\end{equation}
we may fix $\varepsilon_{1}\in(0,\,\alpha\gamma\beta)$ to get the bound
\begin{equation}
T_{\pm}\,\leq\,c_{(\ref{81_1})}\,\Big(x^{1+\beta}\,\mathsf{1}_{\beta
<(\alpha-1)/2}+x^{\alpha-\beta-\varepsilon_{1}}\,\mathsf{1}_{\beta\geq
(\alpha-1)/2}\Big) \label{81_1}%
\end{equation}
on $\{V\leq c_{\varepsilon}\}$ for some constant $c_{(\ref{81_1}%
)}=c_{(\ref{81_1})}(\varepsilon).$ Consequently,
\begin{align}
&  \mathbf{P}\bigl(L_{T_{\pm}}\geq kx^{\eta},\,A^{\varepsilon}%
\bigr)\,\,\label{A}\\
&  \leq\,\mathbf{P}\biggl(\,\sup_{0<v\leq c_{(\ref{81_1})}\left(  x^{1+\beta
}1_{\beta<(\alpha-1)/2}+x^{\alpha-\beta-\varepsilon_{1}}1_{\beta\geq
(\alpha-1)/2}\right)  }\,L_{v}\nonumber\\
&  \qquad\qquad\quad\quad\times\,\mathsf{1}\Big\{\sup_{0<u<v}\Delta L_{u}\leq
c_{(\ref{C1})}\,x^{\bar{\eta}_{\mathrm{c}}-(2\alpha+1)\gamma}\Big\}\geq
kx^{\eta}\biggr).\nonumber
\end{align}
Now use Lemma~\ref{L3} with $\,\kappa=1+\beta,$\thinspace\ $t=c_{(\ref{81_1}%
)}\Big(x^{1+\beta}1_{\beta<(\alpha-1)/2}+x_{\beta\geq(\alpha-1)/2}%
^{\alpha-\beta-\varepsilon_{1}}$ $\mathsf{1}\Big),$\thinspace\ $kx^{\eta}%
$\thinspace\ instead of $x,$\thinspace\ and $\,y=c_{(\ref{C1})}$%
\thinspace$x^{\bar{\eta}_{\mathrm{c}}-(2\alpha+1)\gamma}.$\thinspace\ This
gives
\begin{align}
&  \mathbf{P}\bigl(L_{T_{\pm}}\geq kx^{\eta},\,A^{\varepsilon}%
\bigr)\label{star2}\\
&  \leq\!\Bigg(\frac{Cc_{(\ref{81_1})}\Big(x^{1+\beta}1_{\beta<(\alpha
-1)/2}+x^{\alpha-\beta-\varepsilon_{1}}1_{\beta\geq(\alpha-1)/2}%
\Big)}{kx^{\eta}(c_{(\ref{C1})}x^{\bar{\eta}_{\mathrm{c}}-(2\alpha+1)\gamma
})^{\beta}}\Bigg)^{\!\!\frac{x^{\eta-\bar{\eta}_{\mathrm{c}}+(2\alpha
+1)\gamma}}{c_{(\ref{C1})}}}.\nonumber
\end{align}
Now we need additionally the following simple inequalities, which are easy to
derive:
\begin{equation}
-\eta-\beta\bigl(\bar{\eta}_{\mathrm{c}}-(2\alpha+1)\gamma\bigr)+1+\beta
\,\geq\,(2\alpha+1)\gamma\beta\quad\text{on}\,\ \beta<\frac{\alpha-1}{2}\,,
\end{equation}
and
\begin{equation}
-\eta-\beta\bigl(\bar{\eta}_{\mathrm{c}}-(2\alpha+1)\gamma\bigr)+\alpha
-\beta-\varepsilon_{1}\geq(2\alpha+1)\gamma\beta-\varepsilon_{1}\geq
\alpha\gamma\beta\quad\text{on}\,\ \beta\geq\frac{\alpha-1}{2}\,.
\end{equation}
In fact, $\bar{\eta}_{\mathrm{c}}=1$ under $\beta<(\alpha-1)/2,$ whereas the
other case in the definition of $\bar{\eta}_{\mathrm{c}}$ applies under
$\beta\geq(\alpha-1)/2.$ Then, using the above inequalities and (\ref{star2}),
we obtain
\begin{equation}
\mathbf{P}\bigl(L_{T_{\pm}}\geq kx^{\eta},\,A^{\varepsilon}\bigr)\,\leq
\,\bigl(c_{(\ref{48})}\,k^{-1}x^{\alpha\gamma\beta}\bigr)^{\bigl(c_{(\ref{C1}%
)}^{-1}kx^{\eta-\bar{\eta}_{\mathrm{c}}+(2\alpha+1)\gamma}\bigr)} \label{48}%
\end{equation}
for some constant $c_{(\ref{48})}=c_{(\ref{48})}(\varepsilon).$ Applying this
bound with $\gamma=\frac{\bar{\eta}_{\mathrm{c}}-\eta}{2(2\alpha+1)}$ to the
summands at the right hand side in inequality (\ref{6.T1.2}), and noting that
$\,\alpha\gamma\beta$\thinspace\ is also a positive constant here, we have
\begin{equation}
\mathbf{P}\Big(\bigl|Z_{t}^{2,\varepsilon}(x)-Z_{t}^{2,\varepsilon
}(0)\bigr|\geq2k\,x^{\eta}\Big)\,\leq\,2\bigl(c_{(\ref{48})}k^{-1}%
x\bigr)^{\left(  c_{_{(\ref{49})}}kx^{(\eta-\bar{\eta}_{\mathrm{c}}%
)/2}\right)  } \label{49}%
\end{equation}
for some constant $c_{_{(\ref{49})}}.$ This inequality yields%
\begin{equation}
\lim_{k\rightarrow\infty}\sum_{n=1}^{\infty}\mathbf{P}\Big(\bigl|Z_{t}%
^{2,\varepsilon}(n^{-q})-Z_{t}^{2,\varepsilon}(0)\bigr|\geq kn^{-q\eta}\Big)=0
\label{50}%
\end{equation}
for every positive $q$.

Recall that our purpose is to show that
\begin{equation}
\sup_{0<x<1}\frac{\bigl|Z_{t}^{2}(x)-Z_{t}^{2}(0)\bigr|}{x^{\eta}}<\infty
\quad\text{almost surely,}%
\end{equation}
or, in other words,
\begin{equation}
\lim_{k\uparrow\infty}\mathbf{P}\biggl(\,\sup_{0<x<1}\frac{\bigl|Z_{t}%
^{2}(x)-Z_{t}^{2}(0)\bigr|}{x^{\eta}}>k\biggr)\,=\,0. \label{6.T1.6}%
\end{equation}
It is easy to see that
\begin{equation}
\left\{  \sup_{0<x<1}\frac{\bigl|Z_{t}^{2}(x)-Z_{t}^{2}(0)\bigr|}{x^{\eta}%
}>k\right\}  \subseteq\,\bigcup_{n=1}^{\infty}\left\{  \sup_{x\in I_{n}%
}\bigl|Z_{t}^{2}(x)-Z_{t}^{2}(0)\bigr|>\frac{k}{2^{q}}n^{-q\eta}\right\}  \!,
\end{equation}
where $I_{n}:=\{x:(n+1)^{-q}\leq x<n^{-q}\}$. Moreover, by the triangle
inequality,
\begin{equation}
\bigl|Z_{t}^{2}(x)-Z_{t}^{2}(0)\bigr|\leq\bigl|Z_{t}^{2}(x)-Z_{t}^{2}%
(n^{-q})\bigr|+\bigl|Z_{t}^{2}(n^{-q})-Z_{t}^{2}(0)\bigr|,\quad x\in I_{n\,}.
\end{equation}
Furthermore, for all $R>0$,
\begin{align}
&  \left\{  \sup_{0<x<y<1}\frac{\bigl|Z_{t}^{2}(x)-Z_{t}^{2}(y)\bigr|}%
{|x-y|^{q\eta/(q+1)}}\leq R\right\} \nonumber\\
&  \subseteq\left\{  \bigl|Z_{t}^{2}(x)-Z_{t}^{2}(n^{-q})\bigr|\leq
R\,q^{q\eta/(q+1)}n^{-q\eta},\,\ x\in I_{n}\right\}  \!.
\end{align}
Consequently, for all $n\geq1$,
\begin{align*}
&  \left\{  \sup_{x\in I_{n}}\bigl|Z_{t}^{2}(x)-Z_{t}^{2}(0)\bigr|>\frac
{k}{2^{q}}n^{-q\eta}\right\} \\
&  \subseteq\left\{  \sup_{0<x<y<1}\frac{\bigl|Z_{t}^{2}(x)-Z_{t}%
^{2}(y)\bigr|}{|x-y|^{q\eta/(q+1)}}>c(q)k\right\}  \cup\left\{  \bigl|Z_{t}%
^{2}(n^{-q})-Z_{t}^{2}(0)\bigr|>\frac{k}{2^{q+1}}n^{-q\eta}\right\}  \!,
\end{align*}
where $c(q)$ is some positive constant. If we choose $q$ so small that
\thinspace$\eta q/(q+1)<\eta_{c\,},$\thinspace\ then
\begin{equation}
\lim_{k\rightarrow\infty}\mathbf{P}\biggl(\sup_{0<x<y<1}\frac{\bigl|Z_{t}%
^{2}(x)-Z_{t}^{2}(y)\bigr|}{|x-y|^{q\eta/(q+1)}}>c(q)k\biggr)=0,
\end{equation}
since, by Theorem~1.2(a) of \cite{FleischmannMytnikWachtel2010.optimal.Ann},
$Z_{t}^{2}$ is locally H\"{o}lder continuous of every index smaller than
$\eta_{c\,}.$ Therefore, it suffices to show that
\begin{equation}
\lim_{k\rightarrow\infty}\mathbf{P}\biggl(\bigcup_{n=1}^{\infty}\left\{
\bigl|Z_{t}^{2}(n^{-q})-Z_{t}^{2}(0)\bigr|>\frac{k}{2^{q+1}}n^{-q\eta
}\right\}  \biggr)=0.
\end{equation}
But
\begin{align}
&  \mathbf{P}\biggl(\bigcup_{n=1}^{\infty}\left\{  \bigl|Z_{t}^{2}%
(n^{-q})-Z_{t}^{2}(0)\bigr|>\frac{k}{2^{q+1}}n^{-q\eta}\right\}  \biggr)\\
&  \leq\,\mathbf{P}\biggl(\bigcup_{n=1}^{\infty}\left\{  \bigl|Z_{t}%
^{2,\varepsilon}(n^{-q})-Z_{t}^{2,\varepsilon}(0)\bigr|>\frac{k}{2^{q+1}%
}n^{-q\eta}\right\}  \biggr)+\mathbf{P}(A^{\varepsilon,\mathrm{c}}),\nonumber
\end{align}
where $A^{\varepsilon,\mathrm{c}}$ denotes the complement of $A^{\varepsilon}%
$. It follows from (\ref{50}) that
\begin{equation}
\lim_{k\rightarrow\infty}\mathbf{P}\biggl(\bigcup_{n=1}^{\infty}\left\{
\bigl|Z_{t}^{2,\varepsilon}(n^{-q})-Z_{t}^{2,\varepsilon}(0)\bigr|>\frac
{k}{2^{q+1}}n^{-q\eta}\right\}  \biggr)=0.
\end{equation}
Moreover, $\mathbf{P}(A^{\varepsilon,\mathrm{c}})\leq3\varepsilon$, see
(\ref{6.A.eps.est}). As a result we have
\begin{equation}
\limsup_{k\uparrow\infty}\mathbf{P}\biggl(\bigcup_{n=1}^{\infty}\left\{
\bigl|Z_{t}^{2}(n^{-q})-Z_{t}^{2}(0)\bigr|>\frac{k}{2^{q+1}}n^{-q\eta
}\right\}  \biggr)\,\leq\,3\varepsilon.
\end{equation}
Since $\varepsilon$ may be arbitrarily small, this implies (\ref{6.T1.6}).
This yields the desired H\"{o}lder continuity of $\,Z_{t}^{2}$\thinspace\ at
$0,$ for all \thinspace$\eta<\bar{\eta}_{\mathrm{c}\,}.$\thinspace\ Since
$Z_{t}^{1}$ and $Z_{t}^{3}$ are a.s. Lipschitz continuous at $0$
(cf.\ \cite[Remark~2.13]{FleischmannMytnikWachtel2010.optimal.Ann}), recalling
(\ref{rep.dens}), the proof of Theorem~\ref{T.prop.dens.fixed}(a) is
complete.\hfill$\square$\bigskip

\section{Optimality of $\bar{\eta}_{\mathrm{c}}\!:$ proof of
Theorem~$\mathrm{\ref{T.prop.dens.fixed}(b)}$\label{S.opt}}

We continue to consider $\,d=1,$\thinspace\ to fix $\,t,z,\mu,\alpha
,\beta,\eta$\thinspace\ as in the theorem, and to assume $0<t<1$ and $z=0.$

In analogy to the proof of optimality of $\eta_{c}$ in \cite[Section~5]%
{FleischmannMytnikWachtel2010.optimal.Ann}, our strategy is to find a sequence
of \textquotedblleft big\textquotedblright\ jumps that occur close to time
$t$. But in contrast to the case of the local H\"{o}lder continuity, we need
to find these \textquotedblleft big\textquotedblright\ jumps in the vicinity
of 0, where these jumps should destroy the H\"{o}lder continuity of any index
greater or equal than $\bar{\eta}_{\mathrm{c}\,}.$\thinspace\ This needs to
overcome some new technical difficulties.

Recall that we need to prove the optimality in the case $\beta>(\alpha-1)/2$
only. This implies that $\bar{\eta}_{\mathrm{c}}=\frac{\alpha+1}{\beta+1}-1<1$.

First let us give two technical lemmas that we need for the proof.

\begin{lemma}
[\textbf{Some left-hand continuity}]\label{n.L1} For all \thinspace
$c,\theta>0$,
\begin{equation}
\mathbf{P}\!\left(  X_{t}(0)>\theta,\,\ \liminf_{s\uparrow t}S_{t-s}^{\alpha
}X_{s}\bigl(c\,(t-s)^{1/\alpha}\bigr)\leq\theta\right)  =\,0.
\end{equation}
{}
\end{lemma}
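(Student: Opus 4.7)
My plan is to prove the stronger statement that $S_{t-s}^\alpha X_s(y_s)\to X_t(0)$ almost surely as $s\uparrow t$, where $y_s:=c(t-s)^{1/\alpha}\to 0$. Once this is established the lemma is immediate: on $\{X_t(0)>\theta\}$ the liminf coincides with the limit $X_t(0)>\theta$, so the intersection has probability zero.

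By the Markov property applied at time $s$, re-anchoring the density representation (\ref{rep.dens}) at the random measure $X_s$ yields
\[
\tilde X_t(y_s)-S_{t-s}^\alpha X_s(y_s)\,=\,\int_{(s,t]\times\mathsf{R}}\!M\bigl(\mathrm{d}(r,z)\bigr)\,p_{t-r}^\alpha(z-y_s)\,+\,a\int_s^t\!\mathrm{d}r\!\int X_r(\mathrm{d}z)\,p_{t-r}^\alpha(z-y_s).
\]
Since Theorem~\ref{T.prop.dens.fixed}(a) is already proved, $\tilde X_t$ is a.s.\ continuous at $0$, hence $\tilde X_t(y_s)\to X_t(0)$ a.s. For the drift term, the crude bound $p_{t-r}^\alpha\le p_1^\alpha(0)(t-r)^{-1/\alpha}$ gives an upper bound of order $|a|\,\sup_{r\le t}\langle X_r,1\rangle\,(t-s)^{1-1/\alpha}$, which tends to $0$ a.s.\ because $\alpha>1$ and the total-mass process is a.s.\ bounded on $[0,t]$.

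The core technical step, and the one where I expect the principal difficulty, is showing the martingale contribution tends to $0$ \emph{almost surely}. Applying \cite[Lemma~2.15]{FleischmannMytnikWachtel2010.optimal.Ann} one writes it as $L^{(s)}_{T(s)}$, where $L^{(s)}$ is a spectrally positive $(1+\beta)$-stable process and $T(s):=\int_s^t\mathrm{d}r\int X_r(\mathrm{d}z)\bigl(p_{t-r}^\alpha(z-y_s)\bigr)^{1+\beta}$. Using $(p_{t-r}^\alpha)^{1+\beta}\le C(t-r)^{-\beta/\alpha}\,p_{t-r}^\alpha$ and restricting to the high-probability event $\{V\le c_\varepsilon\}$ of \cite[Lemma~2.11]{FleischmannMytnikWachtel2010.optimal.Ann}, one obtains $T(s)\le C(t-s)^{1-\beta/\alpha}\to0$; combined with the standard $p$-th moment inequality ($p\in(1,1+\beta)$) for stable stochastic integrals and Chebyshev, this gives convergence in probability. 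Upgrading to a.s.\ convergence along the \emph{entire} family $s\uparrow t$ is crucial, since a liminf statement is not preserved when one replaces a net by a subsequence. I would work with the dyadic subdivision $s_n:=t-2^{-n}$: Borel--Cantelli delivers a.s.\ convergence along $(s_n)$, and the oscillation $\sup_{s\in[s_n,s_{n+1}]}|L^{(s)}_{T(s)}-L^{(s_{n+1})}_{T(s_{n+1})}|$ is handled by decomposing the difference into (i) the stochastic integral over the short interval $(s,s_{n+1}]$ and (ii) a change-of-integrand term obtained by replacing $y_{s_{n+1}}$ with $y_s$, the latter bounded via Lemma~\ref{L1old} with a small exponent $\delta$ so as to gain an additional $(t-s_n)^{\delta/\alpha}$ factor. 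Summing the resulting geometric block bounds gives the required uniform a.s.\ control. The source of the obstacle is precisely the $s$-dependence of the integrand $p_{t-r}^\alpha(\cdot-y_s)$, which prevents a direct appeal to backwards-martingale convergence as one would have for a fixed evaluation point, and forces the dyadic-block bookkeeping above.
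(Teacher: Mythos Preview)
Your approach is genuinely different from the paper's, and the comparison is instructive.

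The paper does \emph{not} attempt to prove almost-sure convergence of $S_{t-s}^\alpha X_s\bigl(c(t-s)^{1/\alpha}\bigr)$ to $X_t(0)$. Instead it exploits the conditional-expectation identity directly: by the strong Markov property, $S_{t-\tau}^\alpha X_\tau(x)=\mathbf{E}[X_t(x)\mid\mathcal{F}_\tau]$ for any stopping time $\tau<t$. On the event $A:=\{\liminf_{s\uparrow t}S_{t-s}^\alpha X_s(c(t-s)^{1/\alpha})\le\theta\}$ the paper chooses stopping times $\tau_n\uparrow t$ at which $S_{t-\tau_n}^\alpha X_{\tau_n}(x_n)\le\theta+1/n$ with $x_n:=c(t-\tau_n)^{1/\alpha}\to0$. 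Continuity of $\tilde X_t$ at $0$ together with the integrability bound $\mathbf{E}\sup_{|x|\le1}X_t(x)<\infty$ gives $X_t(x_n)\to X_t(0)$ in $\mathcal{L}_1$, and L\'evy's theorem on conditional expectations gives $\mathbf{E}[X_t(0)\mid\mathcal{F}_{\tau_n}]\to X_t(0)$ in $\mathcal{L}_1$. Combining, $\mathsf{1}_A\,S_{t-\tau_n}^\alpha X_{\tau_n}(x_n)\to \mathsf{1}_A\,X_t(0)$ in $\mathcal{L}_1$, which forces $X_t(0)\le\theta$ a.s.\ on $A$. No stochastic-integral estimate is needed at all; the only analytic input beyond the Markov property is continuity of $\tilde X_t$ and a uniform first-moment bound.

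Your route---proving full a.s.\ convergence by direct estimation of the martingale remainder---aims for a stronger conclusion but at a real technical cost, and one step is not yet justified. The dyadic Borel--Cantelli along $s_n=t-2^{-n}$ is fine, but the oscillation control
\[
\sup_{s\in[s_n,s_{n+1}]}\bigl|L^{(s)}_{T(s)}-L^{(s_{n+1})}_{T(s_{n+1})}\bigr|
\]
is not adequately handled as written. For each \emph{fixed} $s$ your decomposition into (i) and (ii) yields a bound, but the supremum runs over an uncountable family, and both the time domain $(s,t]$ and the integrand $p_{t-r}^\alpha(\cdot-y_s)$ move with $s$. Part (i) is not a backwards martingale in $s$ because the integrand changes with $s$, so Doob's inequality does not apply directly; part (ii) is a random function of $y_s$ whose supremum over an interval needs its own Kolmogorov-type continuity argument or a further layer of discretization. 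These gaps can very likely be filled, but the bookkeeping is substantial and your sketch does not yet carry it through. The paper's conditional-expectation argument sidesteps all of this and is the more economical proof.
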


\begin{proof}
For brevity, set
\begin{equation}
A\,:=\left\{  \liminf_{s\uparrow t}S_{t-s}^{\alpha}X_{s}%
\bigl(c\,(t-s)^{1/\alpha}\bigr)\leq\theta\right\}  ,
\end{equation}
and for $n>1/t$ define the stopping times
\begin{equation}
\tau_{n}\,:=\,\left\{  \!%
\begin{array}
[c]{l}%
\inf\Big\{s\in(t-1/n,t):S_{t-s}^{\alpha}X_{s}\bigl(c\,(t-s)^{1/\alpha
}\bigr)\leq\theta+1/n\Big\},\quad\omega\in A,\vspace{4pt}\\
t,\quad\omega\in A^{\mathrm{c}}.
\end{array}
\right.
\end{equation}
Define also
\begin{equation}
x_{n}:=\,c\,(t-\tau_{n})^{1/\alpha}.
\end{equation}
Then, using the strong Markov property, we get
\begin{equation}
\mathbf{E}\!\left[  X_{t}(x_{n})\,\big|\,\mathcal{F}_{\tau_{n}}\right]
=\,S_{t-\tau_{n}}^{\alpha}X_{\tau_{n}}(x_{n})=\,X_{t}(0)\mathsf{1}%
_{A^{\mathrm{c}}}+S_{t-\tau_{n}}^{\alpha}X_{\tau_{n}}(x_{n})\mathsf{1}_{A\,}.
\label{n1}%
\end{equation}
We next note that $x_{n}\rightarrow0$ almost surely as $n\uparrow\infty$. This
implies, in view of the continuity of $X_{t}$ at zero, that $X_{t}%
(x_{n})\rightarrow X_{t}(0)$ almost surely. Recalling that \thinspace
$\mathbf{E}\sup_{|x|\leq1}X_{t}(x)<\infty$\thinspace\ in view of Corollary~2.8
of \cite{FleischmannMytnikWachtel2010.optimal.Ann}, we conclude that
\begin{equation}
X_{t}(x_{n})\,\underset{n\uparrow\infty}{\longrightarrow}\,X_{t}%
(0)\quad\text{in }\mathcal{L}_{1\,}.
\end{equation}
This, in its turn, implies that
\begin{equation}
\mathbf{E}\left[  X_{t}(x_{n})|\mathcal{F}_{\tau_{n}}\right]  -\mathbf{E}%
\left[  X_{t}(0)|\mathcal{F}_{\tau_{n}}\right]  \,\underset{n\uparrow\infty
}{\longrightarrow}\,0\quad\text{in }\mathcal{L}_{1\,}. \label{n2}%
\end{equation}
Furthermore, it follows from the well known Levy theorem on convergence of
conditional expectations that
\begin{equation}
\mathbf{E}\!\left[  X_{t}(0)\,\big|\,\mathcal{F}_{\tau_{n}}\right]
\,\underset{n\uparrow\infty}{\longrightarrow}\,\mathbf{E}\!\left[
X_{t}(0)\big|\,\mathcal{F}_{\infty}\right]  \quad\text{in }\mathcal{L}_{1\,},
\end{equation}
where $\mathcal{F}_{\infty}:=\sigma\left(  \cup_{n>1/t}\mathcal{F}_{\tau_{n}%
}\right)  $.

Noting that $\tau_{n}\uparrow t$, we conclude that
\begin{equation}
\mathcal{F}_{t-}\subseteq\mathcal{F}_{\infty}\subseteq\mathcal{F}_{t\,}.
\end{equation}
Since $X_{\cdot}(0)$ is continuous at fixed $t$ a.s., we have $X_{t}%
(0)=\mathbf{E}\!\left[  X_{t}(0)\,\big|\,\mathcal{F}_{t-}\right]  $ almost
surely. Consequently, $\mathbf{E}\!\left[  X_{t}(0)\,\big|\,\mathcal{F}%
_{\infty}\right]  =X_{t}(0)$ almost surely, and we get, as a result,
\begin{equation}
\mathbf{E}\!\left[  X_{t}(0)\,\big|\,\mathcal{F}_{\tau_{n}}\right]
\,\underset{n\uparrow\infty}{\longrightarrow}\,X_{t}(0)\quad\text{in
}\mathcal{L}_{1\,}. \label{n3}%
\end{equation}
Combining (\ref{n2}) and (\ref{n3}), we have
\begin{equation}
\mathbf{E}\!\left[  X_{t}(x_{n})\,\big|\,\mathcal{F}_{\tau_{n}}\right]
\,\underset{n\uparrow\infty}{\longrightarrow}\,X_{t}(0)\quad\text{in
}\mathcal{L}_{1\,}.
\end{equation}
\textrm{F}rom this convergence and (\ref{n1}) we get finally%
\begin{equation}
\mathbf{E}\Big(\mathsf{1}_{A}\left\vert X_{t}(0)-S_{t-\tau_{n}}^{\alpha
}X_{\tau_{n}}(x_{n})\right\vert \Big)\,\underset{n\uparrow\infty
}{\longrightarrow}\,0.
\end{equation}
Since $\,S_{t-\tau_{n}}^{\alpha}X_{\tau_{n}}(x_{n})\leq\theta+1/n$%
\thinspace\ on $A$, for all $n>1/t$, the latter convergence implies that
$X_{t}(0)\leq\theta$ almost surely on the event $A$. Thus, the proof is finished.
\end{proof}

\begin{lemma}
[\textbf{Some local boundedness}]\label{n.L2} Fix any non-empty bounded
$B\subset\mathsf{R}$. Then
\begin{equation}
W_{B}:=\ \sup_{(c,s,x):\ \,c\geq1,\,\,0\vee(t-c^{-\alpha})\leq s<t,\,\,x\in
B}\frac{X_{s}\bigl(B_{c\,(t-s)^{1/\alpha}}(x)\bigr)}{c\,(t-s)^{1/\alpha}%
}<\infty\quad\text{a.s.}%
\end{equation}
{}
\end{lemma}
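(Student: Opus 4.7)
The plan is to reduce the mass of a small ball to a semigroup value via a lower bound on $p_{1}^{\alpha}$, and then to apply (an enlargement of) \cite[Lemma~2.11]{FleischmannMytnikWachtel2010.optimal.Ann} together with the tail decay of the stable kernel. Enlarging if necessary, I would take $B=B_{R}(0)$ with $R\ge 1$. The two constraints $c\ge 1$ and $t-s\le c^{-\alpha}$ mean that $r:=c(t-s)^{1/\alpha}$ stays in $[(t-s)^{1/\alpha},1]$. The scaling $p_{r^{\alpha}}^{\alpha}(z)=r^{-1}p_{1}^{\alpha}(z/r)$ together with $c_{0}:=\inf_{|u|\le 1}p_{1}^{\alpha}(u)>0$ yields the basic pointwise bound $X_{s}(B_{r}(x))\le(r/c_{0})\,S_{r^{\alpha}}^{\alpha}X_{s}(x)$, valid for every $r>0$, $s\ge 0$, $x\in\mathsf{R}$, which converts the problem to a uniform bound on a semigroup value.

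In the close range $1\le c\le 2$, the inclusion $B_{r}(x)\subset B_{2(t-s)^{1/\alpha}}(x)$ combined with $r\ge(t-s)^{1/\alpha}$ and the basic bound applied at the larger radius $\tilde{r}=2(t-s)^{1/\alpha}$ gives
\[
\frac{X_{s}(B_{r}(x))}{r}\,\le\,\frac{2}{c_{0}}\,S_{2^{\alpha}(t-s)}^{\alpha}X_{s}(x)\,\le\,\frac{2}{c_{0}}\,V_{R},
\]
where $V_{R}$ is the analogue of $V$ with $B_{2}(0)$ replaced by $B_{R}(0)$. Its almost-sure finiteness is established by the same argument as in \cite[Lemma~2.11]{FleischmannMytnikWachtel2010.optimal.Ann} with an obvious enlargement of the underlying ball.

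In the far range $c\ge 2$ one has $r^{\alpha}\ge 2^{\alpha}(t-s)$, so the semigroup property permits the decomposition
\[
S_{r^{\alpha}}^{\alpha}X_{s}(x)\,=\,\int_{\mathsf{R}}p_{r^{\alpha}-2^{\alpha}(t-s)}^{\alpha}(y-x)\,S_{2^{\alpha}(t-s)}^{\alpha}X_{s}(y)\,\mathrm{d}y.
\]
I would split this integral at $y\in B_{R'}(0)$ versus $y\notin B_{R'}(0)$ for some fixed $R'>R+1$. The inside contribution is at most $V_{R'}$, since the kernel has total mass one and $S_{2^{\alpha}(t-s)}^{\alpha}X_{s}(y)\le V_{R'}$ there. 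For the outside contribution I would invoke the standard stable tail estimate $p_{w}^{\alpha}(z)\le C\,w\,|z|^{-\alpha-1}$, valid for $|z|\ge w^{1/\alpha}$, with $w=r^{\alpha}-2^{\alpha}(t-s)\le 1$ (here the hypothesis $r\le 1$ is crucial) and $|y-x|\ge R'-R-1$; combined with $\int S_{2^{\alpha}(t-s)}^{\alpha}X_{s}(y)\,\mathrm{d}y=X_{s}(\mathsf{R})\le\sup_{s\le t}X_{s}(\mathsf{R})<\infty$ a.s.\ (the total mass being a c\`adl\`ag continuous-state branching process, finite on $[0,t]$), this produces a bound of order $C\,(R'-R)^{-\alpha-1}\,\sup_{s\le t}X_{s}(\mathsf{R})$. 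Together with the close case this yields $W_{B}<\infty$ a.s.

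The main obstacle is the far range: there the semigroup time $r^{\alpha}$ is not comparable to $2^{\alpha}(t-s)$, so Lemma~2.11 cannot be applied directly, and one must work through the convolution representation. It is precisely the constraint $r\le 1$ built into the supremum that ensures the tail prefactor $w\le 1$, so that the tail contribution decays fast enough in $R'$; without this constraint the tail could not be controlled.
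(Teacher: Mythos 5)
Your proof is correct but takes a genuinely different route from the paper's. The paper uses a short covering argument: since the constraint $t-s\le c^{-\alpha}$ forces $c\,(t-s)^{1/\alpha}\le1$, the ball $B_{c(t-s)^{1/\alpha}}(x)$ can be covered by at most $[c]+1\le 2c$ balls of radius $(t-s)^{1/\alpha}$ whose centers still lie in the enlarged set $B_1=\{x:\mathrm{dist}(x,\overline B)\le1\}$. This reduces the whole supremum, with only a factor $2$ lost, to $\sup_{0<s\le t,\,x\in B_1}X_s\bigl(B_{(t-s)^{1/\alpha}}(x)\bigr)/(t-s)^{1/\alpha}$, which is then bounded by $p_1^\alpha(1)^{-1}\sup_{s,x}S^\alpha_{t-s}X_s(x)$ via the same lower bound on the stable density that you use, and \cite[Lemma~2.11]{FleischmannMytnikWachtel2010.optimal.Ann} finishes the job in one step. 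No case split, no Chapman--Kolmogorov decomposition, no stable tail estimate, and no appeal to $\sup_{s\le t}X_s(\mathsf R)<\infty$. You instead keep the semigroup time at $r^\alpha=c^\alpha(t-s)$ and must reconcile it with the reference time $2^\alpha(t-s)$ from the lemma, which is what drives your far-range convolution split and the tail bound $p_w^\alpha(z)\le Cw|z|^{-\alpha-1}$. All of this is correct --- your use of $w\le1$ (again coming from $c\,(t-s)^{1/\alpha}\le1$) is the right crucial point, and $\sup_{s\le t}X_s(\mathsf R)<\infty$ a.s.\ is indeed automatic from right-continuity of the total-mass process --- but it is noticeably more machinery for the same conclusion. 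The lesson to take away is the covering trick: rather than chasing the semigroup at the "wrong" time $r^\alpha$, rescale the radius down to the natural $(t-s)^{1/\alpha}$ scale at the price of a covering multiplicity that the prefactor $c$ in the denominator absorbs.
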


\begin{proof}
Every ball of radius $c\,(t-s)^{1/\alpha}$ can be covered with at most $[c]+1$
balls of radius $(t-s)^{1/\alpha}$. Therefore,
\begin{align}
&  \sup_{(c,s,x):\ \,c\geq1,\,\,0\vee(t-c^{-1/\alpha})\leq s<t,\,\,x\in
B}\frac{X_{s}\bigl(B_{c\,(t-s)^{1/\alpha}}(x)\bigr)}{c\,(t-s)^{1/\alpha}%
}\,\nonumber\\
&  \leq\ 2\,\sup_{(s,x):\ \,0<s\leq t,\,x\in B_{1}}\frac{X_{s}%
\bigl(B_{(t-s)^{1/\alpha}}(x)\bigr)}{(t-s)^{1/\alpha}}\,,
\end{align}
where $B_{1}:=\bigl\{x:\ \mathrm{dist}(x,\overline{B})\leq1\bigr\}$ with
$\overline{B}$ denoting the closure of $B$. (The restriction $s\geq
t-c^{-1/\alpha}$ is imposed to have all centers $x$ of the balls
$B_{(t-s)^{1/\alpha}}(x)$ in $B_{1\,}.)$ We further note that
\begin{equation}
S_{t-s}^{\alpha}X_{s}\,(x)=\int_{\mathsf{R}}\mathrm{d}y\ p_{t-s}^{\alpha
}(x-y)X_{s}(y)\,\geq\int_{B_{(t-s)^{1/\alpha}}(x)}\mathrm{d}y\ p_{t-s}%
^{\alpha}(x-y)X_{s}(y).
\end{equation}
Using the monotonicity and the scaling property of $p^{\alpha}$, we get the
bound
\begin{equation}
S_{t-s}^{\alpha}X_{s}\,(x)\,\geq(t-s)^{-1/\alpha}p_{1}^{\alpha}(1)X_{s}%
\bigl(B_{(t-s)^{1/\alpha}}(x)\bigr).
\end{equation}
Consequently,
\begin{equation}
\sup_{(s,x):\ \,0<s\leq t,\,x\in B_{1}}\frac{X_{s}\bigl(B_{(t-s)^{1/\alpha}%
}(x)\bigr)}{(t-s)^{1/\alpha}}\,\leq\,\frac{1}{p_{1}^{\alpha}(1)}%
\ \sup_{(s,x):\ \,0<s\leq t,\,x\in B_{1}}S_{t-s}^{\alpha}X_{s}\,(x).
\end{equation}
It was proved in Lemma~2.11 of \cite{FleischmannMytnikWachtel2010.optimal.Ann}%
, that the random variable at the right hand side is finite. Thus, the lemma
is proved.
\end{proof}

Introduce the event
\begin{equation}
D_{\theta}:=\left\{  X_{t}(0)>\theta,\ \sup_{0<s\leq t}X_{s}(\mathsf{R}%
)\leq\theta^{-1},\ W_{B_{3}(0)}\leq\theta^{-1}\right\}  \!.
\end{equation}
For the rest of the paper take an arbitrary $\varepsilon\in(0,t\wedge1/8)$.
For constants $c,Q>0,$ define the stopping time
\begin{align}
\tau_{\varepsilon,c,Q}\,:=\,\inf\bigg\{s\in(t-\varepsilon,t):\ \Delta
X_{s}(y)  &  >Q\,\bigl(y\,(t-s)\bigr)^{1/(1+\beta)}\log^{1/(1+\beta
)}\bigl((t-s)^{-1}\bigr),\nonumber\\
&  \,\,\text{for some \ }\frac{c}{2}\,(t-s)^{1/\alpha}\leq y\leq\frac{3c}%
{2}\,(t-s)^{1/\alpha}\bigg\}.
\end{align}
In the next lemma we are going to show the finiteness of \thinspace
$\tau_{\varepsilon,c,Q\,},$\thinspace\ which means that there is a
\textquotedblleft big\textquotedblright\ jump close to time $t$ and to the
spatial point $z=0$.

\begin{lemma}
[\textbf{Finiteness of} $\,\tau_{\varepsilon,c_{(\ref{n.L3.f})},Q}$%
]\label{n.L3} For each $\theta>0$ there exists a constant $c_{(\ref{n.L3.f}%
)}=c_{(\ref{n.L3.f})}(\theta)\geq1$ such that
\begin{equation}
\mathbf{P}\!\left(  \tau_{\varepsilon,c_{(\ref{n.L3.f})},Q}=\infty
\,\big|\,D_{\theta}\right)  =\,0,\quad\varepsilon\in(0,t\wedge1/8),\,\ Q>0.
\label{n.L3.f}%
\end{equation}
{}
\end{lemma}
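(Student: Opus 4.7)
The plan is to combine Lemma~\ref{n.L1} with a careful reading of the compensator formula~(\ref{decomp}) for the jump measure $N$: the point is to show that on $D_\theta$ the intensity of ``big'' jumps in the prescribed region diverges, forcing such jumps to accumulate at time $t$ almost surely.

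Write $K:=(t-s)^{1/\alpha}$ and let $c=c_{(\ref{n.L3.f})}\geq 1$ be a constant (depending only on $\theta$) to be fixed below. Since $D_\theta\subseteq\{X_t(0)>\theta\}$, Lemma~\ref{n.L1} produces almost surely on $D_\theta$ a random $s_*<t$ with $S^\alpha_{t-s}X_s(cK)>\theta$ for all $s\in(s_*,t)$. My first task is to turn this smoothed estimate into the pointwise bound
\[
  X_s\bigl([cK/2,\,3cK/2]\bigr)\,\geq\, C_\theta\,K\qquad\text{for all } s\in(s_{**},t),
\]
for some possibly smaller $s_{**}\leq t$ and a positive constant $C_\theta$. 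To do this I would split $\int p^\alpha_{t-s}(cK-y)\,X_s(dy)$ into the near piece $y\in[cK/2,3cK/2]$, dyadic annuli $\{y:|y-cK|\in[2^j cK/2,\,2^{j+1}cK/2]\}$ up to radius one, and a far tail $|y-cK|>1$. On the near piece use $\|p^\alpha_{t-s}\|_\infty\leq CK^{-1}$; on the annuli combine the scaling consequence $p^\alpha_{t-s}(u)\leq C(t-s)|u|^{-\alpha-1}$ of~(\ref{tail}) with $X_s(B_{2^j cK}(cK))\leq\theta^{-1}\cdot 2^j cK$, which follows from $W_{B_3(0)}\leq\theta^{-1}$ once $cK\leq 3$ and $2^j cK\leq 1$; on the far tail use the same tail bound together with $X_s(\mathsf{R})\leq\theta^{-1}$. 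Geometric summation gives respective contributions $Cc^{-\alpha}\theta^{-1}$ and $C'\theta^{-1}(t-s)$. Picking $c_{(\ref{n.L3.f})}$ of order $\theta^{-2/\alpha}$ and then restricting $s$ sufficiently close to $t$ makes each of these strictly less than $\theta/2$, so the near piece contributes more than $\theta/2$, which combined with $\|p^\alpha_{t-s}\|_\infty\leq CK^{-1}$ yields the stated lower bound.

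With this local mass bound secured, consider the predictable region
\[
  R:=\Bigl\{(s,y,r):s\in(t-\varepsilon,t),\ y\in[cK/2,3cK/2],\ r>Q\bigl(y(t-s)\bigr)^{1/(1+\beta)}\log^{1/(1+\beta)}\!\bigl((t-s)^{-1}\bigr)\Bigr\}.
\]
Integrating out the inner $r$-variable in~(\ref{decomp}) gives
\[
  \hat N(R)\,=\,\frac{\varrho\,Q^{-1-\beta}}{1+\beta}\int_{t-\varepsilon}^{t}\frac{ds}{(t-s)\log((t-s)^{-1})}\int_{cK/2}^{3cK/2}\frac{X_s(dy)}{y}.
\]
On $(s_{**},t)\cap(t-\varepsilon,t)$ the previous step combined with $1/y\geq 2/(3cK)$ bounds the inner $y$-integral below by a positive constant depending only on $\theta$ (uniformly in $s$), while the outer $ds$-integral has antiderivative $-\log\log((t-s)^{-1})$ and diverges at $s=t$. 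Hence $\hat N(R)=\infty$ almost surely on $D_\theta$, for every $\varepsilon>0$ and every $Q>0$.

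Finally, since $\hat N$ is absolutely continuous in the time variable, the counting process $s\mapsto N(R\cap((t-\varepsilon,s]\times\mathsf{R}\times\mathsf{R}_+))$, $s\in(t-\varepsilon,t)$, has a continuous predictable compensator, so by the classical random time change it equals $\Pi\bigl(\hat N(R\cap((t-\varepsilon,\cdot]\times\mathsf{R}\times\mathsf{R}_+))\bigr)$ for some standard rate-$1$ Poisson process $\Pi$. The random clock tends to $+\infty$ as $s\uparrow t$ on $D_\theta$, so the counting process itself tends to $+\infty$, and in particular $\tau_{\varepsilon,c_{(\ref{n.L3.f})},Q}<\infty$ almost surely on $D_\theta$, which is~(\ref{n.L3.f}). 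The delicate step is the local-mass step: converting the smoothed estimate $S^\alpha_{t-s}X_s(cK)>\theta$ into a sharp lower bound on $X_s$ of a narrow interval of length of order $(t-s)^{1/\alpha}$ requires using \emph{both} the total-mass bound and the local scale-$K$ bound $W_{B_3(0)}$ built into $D_\theta$, and the balance between them determines the choice of $c_{(\ref{n.L3.f})}$.
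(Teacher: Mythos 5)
Your proposal is correct and follows essentially the same route as the paper: both use Lemma~\ref{n.L1} for the lower bound on $S^\alpha_{t-s}X_s(c(t-s)^{1/\alpha})$, Lemma~\ref{n.L2} (via $W_{B_3(0)}$) plus the tail bound (\ref{tail}) and $\sup_s X_s(\mathsf{R})\leq\theta^{-1}$ to control the far contribution, then deduce that the compensator of the big-jump region diverges, which forces infinitely many such jumps before time $t$. The only cosmetic difference is that you extract an explicit intermediate lower bound on $X_s([cK/2,3cK/2])$ using dyadic annuli before plugging into the compensator integral, whereas the paper runs the same estimate inline with linearly spaced annuli, subtracting the outer integral directly from $S^\alpha_{t-s}X_s(cK)$ under the $ds$-integral.
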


\begin{proof}
Analogously to the proof of Lemma~4.3 in
\cite{FleischmannMytnikWachtel2010.optimal.Ann}, to demonstrate that the
number of jumps is greater than zero almost surely on some event, it is enough
to show divergence of a certain integral on that event or even on a bigger
one. Specifically here, it suffices to verify that%
\begin{equation}
I_{\varepsilon,c}\,:=\int_{t-\varepsilon}^{t}\frac{\mathrm{d}s}{(t-s)\log
\bigl((t-s)^{-1}\bigr)}\int_{\frac{c}{2}\,(t-s)^{1/\alpha}}^{\frac{3c}%
{2}\,(t-s)^{1/\alpha}}\mathrm{d}y\ y^{-1}X_{s}(y)=\infty
\end{equation}
almost surely on the event $\left\{  X_{t}(0)>\theta,\ \sup_{0<s\leq t}%
X_{s}(\mathsf{R})\leq\theta^{-1}\right\}  $.

The mapping $\varepsilon\mapsto I_{\varepsilon,c}$ is non-increasing.
Therefore we shall additionally assume, without loss of generality, that
$\varepsilon\leq c^{-1/\alpha}$ and this in turn implies that
$c\,(t-s)^{1/\alpha}\leq1$ for all $s\in(t-\varepsilon,t)$. So, in what
follows, in the proof of the lemma we will assume without loss of generality
that given $c$, we choose $\varepsilon$ so that,
\begin{equation}
c\,(t-s)^{1/\alpha}\leq1,\quad s\in(t-\varepsilon,t).
\end{equation}

Since $y\leq\frac{3c}{2}\,(t-s)^{1/\alpha}$ and $p_{s}^{\alpha}(x)\leq
p_{s}^{\alpha}(0)$ for all $x\in\mathsf{R}$, we have
\begin{align}
I_{\varepsilon,c}\,\geq\frac{2}{3c}\int_{t-\varepsilon}^{t} &  \,\frac
{\mathrm{d}s}{(t-s)^{1+1/\alpha}\log\bigl((t-s)^{-1}\bigr)}\nonumber\\
&  \times\int_{\frac{c}{2}\,(t-s)^{1/\alpha}}^{\frac{3c}{2}\,(t-s)^{1/\alpha}%
}\!\!\mathrm{d}y\ \frac{p_{t-s}^{\alpha}\bigl(c\,(t-s)^{1/\alpha}%
-y\bigr)}{p_{t-s}^{\alpha}(0)}\,X_{s}(y).
\end{align}
Then, using the scaling property of $p^{\alpha}$, we obtain
\begin{align}
I_{\varepsilon,c}\,\geq\frac{2}{3c\,p_{1}^{\alpha}(0)}\int_{t-\varepsilon}^{t}
&  \,\frac{\mathrm{d}s}{(t-s)\log\bigl((t-s)^{-1}\bigr)}\,\biggl(S_{t-s}%
^{\alpha}X_{s}\,\bigl(c\,(t-s)^{1/\alpha}\bigr)\nonumber\label{n4}\\
- &  \int_{\left\vert _{\!_{\!_{\,}}}y-c\,(t-s)^{1/\alpha}\right\vert
\,>\,\frac{c}{2}\,(t-s)^{1/\alpha}}\mathrm{d}y\ p_{t-s}^{\alpha}%
\bigl(c\,(t-s)^{1/\alpha}-y\bigr)X_{s}(y)\!\biggr).
\end{align}
Since we are in dimension one, if
\begin{align}
y &  \in\widetilde{D}_{s,j}:=\nonumber\\
&  \left\{  z:\;c\,\Big(\frac{1}{2}+j\Big)(t-s)^{1/\alpha}<\left\vert
_{\!_{\!_{\,}}}z-c\,(t-s)^{1/\alpha}\right\vert <c\,\Big(2+\frac{1}%
{2}+j\Big)(t-s)^{1/\alpha}\right\}  \!,\label{1dim}%
\end{align}
then%
\begin{align}
&  p_{t-s}^{\alpha}\bigl(c\,(t-s)^{1/\alpha}-y\bigr)\leq p_{t-s}^{\alpha
}\bigl(c\,(j+1/2)(t-s)^{1/\alpha}\bigr)\\
&  =(t-s)^{-1/\alpha}p_{1}^{\alpha}\bigl(c\,(j+1/2)\bigr)\leq c_{(\ref{tail}%
)}c^{-\alpha-1}(t-s)^{-1/\alpha}(1/2+j)^{-\alpha-1}.\nonumber
\end{align}
\textrm{F}rom this bound
we conclude that
\begin{gather}
\int_{\left\vert _{\!_{\!_{\,}}}y-c\,(t-s)^{1/\alpha}\right\vert \,>\,\frac
{c}{2}\,(t-s)^{1/\alpha}}\mathrm{d}y\ p_{t-s}^{\alpha}\bigl(c\,(t-s)^{1/\alpha
}-y\bigr)\mathsf{1}_{B_{2}(0)}(y)X_{s}(y)\nonumber\\
\leq c_{(\ref{tail})}c^{-\alpha-1}(t-s)^{-1/\alpha}\sum_{j=0}^{\infty
}(1/2+j)^{-\alpha-1}\int_{\widetilde{D}_{s,j}}\mathrm{d}y\ \mathsf{1}%
_{B_{2}(0)}(y)X_{s}(y).
\end{gather}
Now recall again that the spatial dimension equals to one and hence for any
$j\geq0$ the set $\widetilde{D}_{s,j}$ in (\ref{1dim}) is the union of two
balls of radius $c\,(t-s)^{1/\alpha}$. If furthermore $\widetilde{D}_{s,j}\cap
B_{2}(0)\neq\emptyset$, then, in view of the assumption $c\,(t-s)^{1/\alpha
}\leq1$, the centers of those balls lie in $B_{3}(0)$. Therefore we can apply
Lemma~\ref{n.L2} to bound the integral $\int_{\widetilde{D}_{s,j}}%
\mathrm{d}y\ \mathsf{1}_{B_{2}(0)}(y)X_{s}(y)$ by $2c\,(t-s)^{1/\alpha
}W_{B_{3}(0)}$
and obtain%
\begin{gather}
\int_{\left\vert _{\!_{\!_{\,}}}y-c\,(t-s)^{1/\alpha}\right\vert \,>\,\frac
{c}{2}\,(t-s)^{1/\alpha}}\mathrm{d}y\ p_{t-s}^{\alpha}\bigl(c\,(t-s)^{1/\alpha
}-y\bigr)\mathsf{1}_{B_{2}(0)}(y)X_{s}(y)\nonumber\\
\leq2W_{B_{3}(0)}c_{(\ref{tail})}c^{-\alpha}\sum_{j=0}^{\infty}%
(1/2+j)^{-\alpha-1}\leq CW_{B_{3}(0)}c^{-\alpha}.\label{n5}%
\end{gather}
Furthermore, if $|y|\geq2$ and $(t-s)\leq c^{-\alpha}$, then
\begin{equation}
p_{t-s}^{\alpha}\bigl(c\,(t-s)^{1/\alpha}-y\bigr)\leq p_{t-s}^{\alpha
}(1)=(t-s)^{-1/\alpha}p_{1}^{\alpha}\bigl((t-s)^{-1/\alpha}\bigr)\leq
c_{(\ref{tail})}(t-s).
\end{equation}
This implies that
\begin{gather}
\int_{\mathsf{R}\setminus B_{2}(0)}\mathrm{d}y\ p_{t-s}^{\alpha}%
\bigl(c\,(t-s)^{1/\alpha}-y\bigr)X_{s}(y)\leq c_{(\ref{tail})}(t-s)X_{s}%
(\mathsf{R})\nonumber\\
\leq c_{(\ref{tail})}c^{-\alpha}X_{s}(\mathsf{R}).
\end{gather}
Combining this bound with (\ref{n5}), we obtain
\begin{gather}
\int_{\left\vert _{\!_{\!_{\,}}}y-c\,(t-s)^{1/\alpha}\right\vert \,>\,\frac
{c}{2}\,(t-s)^{1/\alpha}}\mathrm{d}y\ p_{t-s}^{\alpha}\bigl(c\,(t-s)^{1/\alpha
}-y\bigr)X_{s}(y)\\
\leq\,Cc^{-\alpha}\Bigl(W_{B_{3}(0)}+\sup_{0<s\leq t}X_{s}(\mathsf{R}%
)\Bigr).\nonumber
\end{gather}
Thus, we can choose $c$ so large that the right hand side in the previous
inequality does not exceed $\theta/2$. Since, in view of Lemma~\ref{n.L1},
\begin{equation}
\liminf_{s\uparrow t}S_{t-s}^{\alpha}X_{s}\bigl(c\,(t-s)^{1/\alpha
}\bigr)>\theta,
\end{equation}
we finally get%
\begin{align}
&  \liminf_{s\uparrow t}\Biggl(S_{t-s}^{\alpha}X_{s}\bigl(c\,(t-s)^{1/\alpha
}\bigr)\\
&  \qquad\quad-\int_{|y-c\,(t-s)^{1/\alpha}|\,>\,\frac{c}{2}\,(t-s)^{1/\alpha
}}\mathrm{d}y\ p_{t-s}^{\alpha}\bigl(c\,(t-s)^{1/\alpha}-y\bigr)X_{s}%
(y)\Biggr)\geq\theta/2.\nonumber
\end{align}
\textrm{F}rom this bound and (\ref{n4}) the desired property of
$I_{\varepsilon,c}$ follows.
\end{proof}

Fix any $\theta>0$, and to simplify notation write $c:=c_{(\ref{n.L3.f})}.$
For all $n$ sufficiently large, say $n\geq N_{0\,},$ define
\begin{align}
A_{n}\,:=\biggl\{\Delta X_{s}\left(  \Big(\frac{c}{2}\,2^{-n},\frac{3c}%
{2}\,2^{-n}\Big)\right)   &  \geq2^{-(\bar{\eta}_{\mathrm{c}}+1)n}%
\,n^{1/(1+\beta)}\\
\text{ }  &  \text{for some }s\in(t-2^{-\alpha n},t-2^{-\alpha(n+1)}%
)\biggr\}.\nonumber
\end{align}

Based on Lemma~\ref{n.L3} we will show in the following lemma that,
conditionally on $D_{\theta\,},$\thinspace\ infinitely many of the $A_{n}$'s
occur. This then gives us a bit more precise information on the
\textquotedblleft big\textquotedblright\ jumps we are looking for.

\begin{lemma}
[\textbf{Existence of big jumps}]\label{n.L4}We have
\begin{equation}
\mathbf{P}\!\left(  A_{n}\,\text{infinitely often}\ \big|\,D_{\theta}\right)
=1.
\end{equation}
{}
\end{lemma}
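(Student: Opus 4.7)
The plan is to derive Lemma~\ref{n.L4} from Lemma~\ref{n.L3} by applying the latter along a sequence $\varepsilon_N \downarrow 0$ that forces the big jumps it produces into arbitrarily deep dyadic scales. Set $c := c_{(\ref{n.L3.f})}(\theta)$ and calibrate the threshold parameter $Q$ large enough that any jump $(s^*,y^*)$ witnessing $\{\tau_{\varepsilon,c,Q} < \infty\}$ meets the $A_n$-size threshold for the unique scale $n$ defined by $t - s^* \in (2^{-\alpha(n+1)}, 2^{-\alpha n}]$. This calibration is possible because at scale $n$ one has $y^* \asymp 2^{-n}$ and $t - s^* \asymp 2^{-\alpha n}$, so that
\[
\bigl((t-s^*)\,y^*\bigr)^{1/(1+\beta)} \log^{1/(1+\beta)}\!\bigl((t-s^*)^{-1}\bigr) \,\asymp\, 2^{-(\bar{\eta}_{\mathrm{c}}+1)n}\,n^{1/(1+\beta)},
\]
using the identity $(1+\alpha)/(1+\beta) = \bar{\eta}_{\mathrm{c}}+1$, which matches the $A_n$ threshold up to a multiplicative constant absorbable into $Q$.

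Next, for every integer $N$ with $\varepsilon_N := 2^{-\alpha N} < t\wedge 1/8$, Lemma~\ref{n.L3} gives $\mathbf{P}(\tau_{\varepsilon_N,c,Q} < \infty \mid D_\theta) = 1$. The corresponding jump occurs at a time $s_N^* \in (t - \varepsilon_N, t)$, whose scale $n(s_N^*)$ is automatically at least $N$. The spatial constraint $y_N^* \in [\tfrac{c}{2}(t-s_N^*)^{1/\alpha},\, \tfrac{3c}{2}(t-s_N^*)^{1/\alpha}]$ lies inside $(\tfrac{c}{4}\cdot 2^{-n},\, \tfrac{3c}{2}\cdot 2^{-n})$; after absorbing a harmless factor of two into the definition of $A_n$ (or, equivalently, adjusting the $\tau$-window), this fits into the $A_n$-interval $(\tfrac{c}{2}\cdot 2^{-n},\, \tfrac{3c}{2}\cdot 2^{-n})$. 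Thus, on $D_\theta$ and almost surely, $A_n$ occurs for some $n \geq N$, which yields $\mathbf{P}\bigl(\bigcup_{n \geq N} A_n \mid D_\theta\bigr) = 1$ for every such $N$.

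Finally, since $\bigcup_{n \geq N} A_n$ is nonincreasing in $N$ with intersection $\{A_n \text{ i.o.}\}$, continuity of probability from above gives
\[
\mathbf{P}(A_n \text{ i.o.} \mid D_\theta) \,=\, \lim_{N \to \infty} \mathbf{P}\Bigl(\bigcup_{n \geq N} A_n \,\Big|\, D_\theta\Bigr) \,=\, 1,
\]
which is the claim. The main obstacle I foresee is the scale bookkeeping: for each jump furnished by Lemma~\ref{n.L3} one must identify the correct dyadic index $n$ and reconcile the moving spatial window $[\tfrac{c}{2}(t-s)^{1/\alpha},\, \tfrac{3c}{2}(t-s)^{1/\alpha}]$ with the static dyadic $A_n$-interval. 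This is a purely constant-chasing exercise, since $(t-s)^{1/\alpha}$ and $2^{-n}$ are comparable up to a factor of two at scale $n$; no new probabilistic estimate is needed beyond Lemma~\ref{n.L3}, which already carries all the analytic content.
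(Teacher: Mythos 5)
Your proposal is correct and is essentially the same argument the paper gives: identify the constant $Q$ so that a jump witnessing $\tau_{\varepsilon,c,Q}<\infty$ satisfies the $A_n$ size threshold at the dyadic scale $n$ determined by $t-s$, apply Lemma~\ref{n.L3} with $\varepsilon_N = 2^{-\alpha N}$ to see that $\bigcup_{n\geq N} A_n$ occurs almost surely on $D_\theta$, and then let $N\uparrow\infty$. You also correctly notice the small constant mismatch between the moving $\tau$-window $\bigl(\tfrac{c}{2}(t-s)^{1/\alpha},\,\tfrac{3c}{2}(t-s)^{1/\alpha}\bigr)$ and the static $A_n$-interval $\bigl(\tfrac{c}{2}\,2^{-n},\,\tfrac{3c}{2}\,2^{-n}\bigr)$ (the paper's inclusion~(\ref{101}) silently glosses over the fact that the left endpoint can dip to $\tfrac{c}{4}\,2^{-n}$), and you point out the trivial fix.
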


\begin{proof}
If $y\in\left(  \frac{c}{2}\,(t-s)^{1/\alpha},\,\frac{3c}{2}\,(t-s)^{1/\alpha
}\right)  $ and $s\in(t-2^{-\alpha n},\,t-2^{-\alpha(n+1)})$, then%
\begin{gather}
\left(  (t-s)y\log\bigl((t-s)^{-1}\bigr)\right)  ^{1/(1+\beta)}\geq\left(
2^{-\alpha(n+1)}\,\frac{c}{2}\,2^{-n-1}\alpha n\log2\right)  ^{1/(1+\beta
)}\nonumber\\
=c_{(\ref{100})}^{-1}2^{-(\bar{\eta}_{\mathrm{c}}+1)n}\,n^{1/(1+\beta)}.
\label{100}%
\end{gather}
This implies that
\begin{align}
A_{n}\supseteq\Bigg\{\Delta X_{s}  &  \biggl(\!\Big(\frac{c}{2}%
\,(t-s)^{1/\alpha},\,\frac{3c}{2}\,(t-s)^{1/\alpha}\Big)\!\biggr)\label{101}\\
&  \geq c_{(\ref{100})}\Big((t-s)y\log\bigl((t-s)^{-1}\bigr)\Big)^{1/(1+\beta
)}\nonumber\\
&  \qquad\qquad\text{for some }s\in(t-2^{-\alpha n},\,t-2^{-\alpha
(n+1)})\Bigg\}.\nonumber
\end{align}
In what follows we denote with some abuse of notation $\,\tau_{\varepsilon
,c}:=\tau_{\varepsilon,c,c_{(\ref{100})}}.$\thinspace\ Consequently, from
(\ref{101}) we get
\begin{equation}
\bigcup_{n=N}^{\infty}A_{n}\supseteq\{\tau_{2^{-\alpha N},c}<\infty
\}\quad\text{for all }N>N_{0}\vee\alpha^{-1}\log_{2}(t\wedge1/8).
\end{equation}
Applying Lemma~\ref{n.L3} and using the monotonicity of the union in $N$, we
get
\begin{equation}
\mathbf{P}\!\left(  \bigcup_{n=N}^{\infty}A_{n}\big|\,D_{\theta}\right)
=1\quad\text{for all }N\geq N_{0\,}.
\end{equation}
This completes the proof.
\end{proof}

Now it is time to explain our

\begin{proof}
[Detailed strategy of proof of Theorem~\emph{\ref{T.prop.dens.fixed}(b)}%
]Define
\begin{align}
&  A^{\varepsilon}:=\label{Aep}\\
&  \Big\{\Delta X_{s}(y)\leq c_{(\ref{in6.1a})}%
\,\bigl((t-s)|y|\bigr)^{1/(1+\beta)}(f_{s,x})^{\ell}\text{ for all
}s<t\ \text{and}\ y\in B_{1/\mathrm{e}}(0)\Big\}\nonumber\\
&  \cap\,\Big\{\Delta X_{s}(y)\leq c^{\ast}(t-s)^{1/(1+\beta)-\gamma
}\ \text{for all }s<t\ \text{and }y\in\mathsf{R}\Big\}\cap\{V\leq
c_{\varepsilon}\},\nonumber
\end{align}
where $f_{s,x\,},$ $\ell$ and $c^{\ast}$ are defined in~(\ref{def_f}),
(\ref{def.lambda1}) and (\ref{6.A.eps2}), respectively. Note that $D_{\theta
}\uparrow\{X_{t}(0)>0\}$ as $\theta\downarrow0$ and by (\ref{def_ve}),
(\ref{6.A.eps2}) and Lemma~\ref{6.00} we have $A^{\varepsilon}\uparrow\Omega$
as $\varepsilon\downarrow0$. Hence, for the proof of
Theorem~\ref{T.prop.dens.fixed}(b) \emph{it is sufficient}\/ to show that%
\begin{equation}
\mathbf{P}\biggl(\,\sup_{x\in B_{\epsilon}(0),\,x\neq0}\frac{\bigl|\tilde
{X}_{t}(x)-\tilde{X}_{t}(0)\bigr|}{|x|^{\bar{\eta}_{\mathrm{c}}}}%
\,=\,\infty\,\bigg|\,D_{\theta}\cap A^{\varepsilon}\biggr)=1.
\end{equation}
Moreover, since $Z_{t}^{1}$ and $Z_{t}^{3}$ are a.s. Lipschitz continuous at
$0$, the latter will follow from the equality%
\begin{align}
\mathbf{P}\Big(Z_{t}^{2}(c\,2^{-n-2})-Z_{t}^{2}(0)\geq &  \ 2^{-\bar{\eta
}_{\mathrm{c}}n}\,n^{1/(1+\beta)-\varepsilon}\text{ }\label{Nr.}\\
&  \ \text{infinitely often}\,\Big|\,D_{\theta}\cap A^{\varepsilon
}\Big)=1.\nonumber
\end{align}
To verify (\ref{Nr.}), we will again exploit our method of representing
$Z_{t}^{2}$ using a time-changed stable process. To be more precise, applying
(\ref{6.T1.1}) with $x=c\,2^{-n-2}$ (for $n$ sufficiently large) and using
$n$-dependent notation as $L_{n}^{\pm},T_{n,\pm}$ (and later $\varphi_{n,\pm
})$, we have
\begin{equation}
Z_{t}^{2}(c\,2^{-n-2})-Z_{t}^{2}(0)=L_{n}^{+}(T_{n,+})-L_{n}^{-}(T_{n,-}).
\end{equation}
Let us define the following events
\[
B_{n}^{+}:=\bigl\{L_{n}^{+}(T_{n,+})\geq2^{1-\bar{\eta}_{\mathrm{c}}%
n}\,n^{1/(1+\beta)-\varepsilon}\bigr\},\quad B_{n}^{-}:=\bigl\{L_{n}%
^{-}(T_{n,-})\leq2^{-\bar{\eta}_{\mathrm{c}}n}\,n^{1/(1+\beta)-\varepsilon
}\bigr\}
\]
and
\begin{equation}
B_{n}:=B_{n}^{+}\cap B_{n}^{-}.
\end{equation}
Then, obviously,
\begin{equation}
\left\{  Z_{t}^{2}(c\,2^{-n-2})-Z_{t}^{2}(0)\geq2^{-\bar{\eta}_{\mathrm{c}}%
n}\,n^{1/(1+\beta)-\varepsilon}\right\}  \supseteq B_{n}\supseteq B_{n}\cap
A_{n\,}.
\end{equation}
Thus, (\ref{Nr.}) will follow once we verify
\begin{equation}
\lim_{N\uparrow\infty}\mathbf{P}\biggl(\,\bigcup_{n=N}^{\infty}(B_{n}\cap
A_{n})\,\bigg|\,D_{\theta}\cap A^{\varepsilon}\,\biggr)=\,1. \label{n6}%
\end{equation}
Taking into account Lemma~\ref{n.L4}, we conclude that to get (\ref{n6}) we
have to show
\begin{equation}
\lim_{N\uparrow\infty}\mathbf{P}\biggl(\,\bigcup_{n=N}^{\infty}(B_{n}%
^{\mathrm{c}}\cap A_{n})\,\bigg|\,D_{\theta}\cap A^{\varepsilon}\biggr)=\,0.
\label{n7}%
\end{equation}
Hence, the proof of Theorem~\ref{T.prop.dens.fixed}(b) will be complete once
we demonstrated statement (\ref{n7}).
\end{proof}

Now we will present two lemmas, from which (\ref{n7}) will follow immediately.
To this end, split
\begin{equation}
B_{n}^{\mathrm{c}}\cap A_{n}=(B_{n}^{+,\mathrm{c}}\cap A_{n})\cup
(B_{n}^{-,\mathrm{c}}\cap A_{n}). \label{114}%
\end{equation}

\begin{lemma}
[\textbf{First term in }(\ref{114})]\label{n.L5}We have%
\begin{equation}
\lim_{N\uparrow\infty}\sum_{n=N}^{\infty}\mathbf{P}\bigl(B_{n}^{+,\mathrm{c}%
}\cap A_{n}\cap A^{\varepsilon}\bigr)=\,0.
\end{equation}
{}
\end{lemma}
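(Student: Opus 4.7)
\emph{Plan.} The strategy exploits the big jump of $X$ guaranteed by $A_n$: it generates, via the time change behind~(\ref{6.T1.1}), a large positive jump $J_n$ of the spectrally positive $(1+\beta)$-stable process $L_n^+$. On $B_n^{+,\mathrm{c}}$ the remainder $R_n:=L_n^+(T_{n,+})-J_n$ must therefore be comparably negative, a rare event controlled by the super-exponential left tail of $L_n^+$ (which has only positive jumps).

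\emph{Identifying $J_n$.} On $A_n$, pick any jump $(s_n,y_n,r_n)$ of $X$ with $s_n\in(t-2^{-\alpha n},t-2^{-\alpha(n+1)})$, $y_n\in(c 2^{-n}/2,\, 3c 2^{-n}/2)$ and $r_n\geq 2^{-(\bar\eta_{\mathrm{c}}+1)n}n^{1/(1+\beta)}$. The rescaled arguments $y_n(t-s_n)^{-1/\alpha}$ and $(y_n-c2^{-n-2})(t-s_n)^{-1/\alpha}$ then lie in a fixed compact subset of $(0,\infty)$, on which $|\partial_u p_1^\alpha(u)|$ is bounded below by a positive constant; combined with the scaling of $p_{t-s}^\alpha$ and the mean value theorem, this yields $\varphi_{n,+}(s_n,y_n)\geq c'\,2^{-n}(t-s_n)^{-2/\alpha}\geq c''\,2^n$, so the corresponding jump $J_n=r_n\varphi_{n,+}(s_n,y_n)$ of $L_n^+$ satisfies
\begin{equation*}
J_n\,\geq\,c_1\cdot 2^{-\bar\eta_{\mathrm{c}} n}n^{1/(1+\beta)}.
\end{equation*}
Hence on $B_n^{+,\mathrm{c}}\cap A_n$ we have, for all $n$ large enough,
\begin{equation*}
R_n\,\leq\,2\cdot 2^{-\bar\eta_{\mathrm{c}} n}n^{1/(1+\beta)-\varepsilon}-c_1\cdot 2^{-\bar\eta_{\mathrm{c}} n}n^{1/(1+\beta)}\,\leq\,-\tfrac{c_1}{2}\cdot 2^{-\bar\eta_{\mathrm{c}} n}n^{1/(1+\beta)}.
\end{equation*}

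\emph{Chernoff bound and main obstacle.} A Palm-type argument (removing one atom from a Poisson random measure leaves a Poisson measure with the same intensity) shows that, conditionally on the identified jump and on $T_{n,+}$, $R_n$ has the law of $L_n^+(T_{n,+})$ itself, and in particular $\mathbf{E}[e^{-\lambda R_n}\mid T_{n,+}]=e^{T_{n,+}\lambda^{1+\beta}}$. Optimizing $-\lambda a+T_{n,+}\lambda^{1+\beta}$ over $\lambda>0$ then gives
\begin{equation*}
\mathbf{P}\!\left(R_n\leq -a\mid T_{n,+}\right)\,\leq\,\exp\!\bigl(-C\,a^{(1+\beta)/\beta}\,T_{n,+}^{-1/\beta}\bigr).
\end{equation*}
With $a\sim 2^{-\bar\eta_{\mathrm{c}} n}n^{1/(1+\beta)}$ one has $a^{(1+\beta)/\beta}\sim 2^{-(\alpha-\beta)n/\beta}n^{1/\beta}$ (using $\bar\eta_{\mathrm{c}}(1+\beta)=\alpha-\beta$), so a control of the form $T_{n,+}\leq n^\theta\cdot 2^{-(\alpha-\beta)n}$ with some $\theta<1$, on a subevent of $A^\varepsilon$ whose complement has summable probability in $n$, would reduce the right-hand side to $\exp(-Cn^{(1-\theta)/\beta})$, which is more than summable. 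The main obstacle is thus to sharpen~(\ref{81_1})---whose slack factor $2^{\varepsilon_1 n}$ would destroy the Chernoff exponent---by proving, via a direct computation with Lemma~\ref{L1old} at the critical exponent $\delta=\bar\eta_{\mathrm{c}}$ and a careful treatment of the integrable singularity at $s=t$, together with the local bound $V\leq c_\varepsilon$ on $A^\varepsilon$, that $\mathbf{E}[T_{n,+}^p\mathsf{1}_{A^\varepsilon}]\leq C\cdot 2^{-p(\alpha-\beta)n}$ for some $p>1$; Markov's inequality on this $p$-th moment then yields the required polynomial-in-$n$ control of $T_{n,+}$ with summable complement.
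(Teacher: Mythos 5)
Your high-level strategy matches the one the paper indicates (the paper omits the proof, deferring to Lemma~5.3 of the earlier reference and stating exactly your idea: the $A_n$-jump produces a large jump of $L_n^+$, after which the spectrally positive process can hardly come back down as required by $B_n^{+,\mathrm c}$). Your identification of $J_n$ via scaling and the mean value theorem, the resulting lower bound $J_n\ge c_1\,2^{-\bar\eta_{\mathrm c}n}n^{1/(1+\beta)}$, the algebra giving $R_n\le-\tfrac{c_1}{2}2^{-\bar\eta_{\mathrm c}n}n^{1/(1+\beta)}$, and the Chernoff tail $\exp(-Ca^{(1+\beta)/\beta}T^{-1/\beta})$ are all sound. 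However, the ``Palm-type argument'' asserting that, conditionally on the identified jump and on $T_{n,+}$, $R_n$ has the law of $L_n^+(T_{n,+})$ is not rigorous as stated: $N$ is compensated by the \emph{random} measure $\varrho\,\mathrm ds\,X_s(\mathrm dy)\,r^{-2-\beta}\mathrm dr$, which itself depends on the atom you remove, so $N$ is not an ordinary Poisson measure; $T_{n,+}$ is a functional of the full trajectory of $X$ (hence of the removed jump), so fixing it while removing the atom is not a Poisson-preserving operation; and ``pick any jump'' needs a measurable selection. The standard and correct route is to use the strong Markov property of $L_n^+$ at the jump time $\sigma$ and the continuity of the running infimum of a spectrally positive process: on $B_n^{+,\mathrm c}\cap A_n$ either $\inf_{u<\sigma}L_n^+(u)\le -a/2$ or $L_n^+(T_{n,+})-L_n^+(\sigma)\le -a/2$, each controlled by the Chernoff/Doob inequality once $T_{n,+}$ is deterministically bounded on $A^\varepsilon$.

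The second gap is the one you explicitly leave as the ``main obstacle,'' and the sketch offered to fill it would not work: using Lemma~\ref{L1old} with the critical $\delta=\bar\eta_{\mathrm c}$ throughout gives, after $(p_{t-s}^\alpha)^{1+\beta}\le C(t-s)^{-\beta/\alpha}p_{t-s}^\alpha$ and $\int X_s(\mathrm dy)\,p_{t-s}^\alpha(\cdot/2)\le CV$, the $s$-integral $\int_0^t(t-s)^{-1}\mathrm ds$, which is \emph{logarithmically divergent}, not an integrable singularity; taking a $p$-th moment does not cure this. The correct refinement splits the range at $t-s=x^\alpha$ with $x=c\,2^{-n-2}$: for $t-s>x^\alpha$ apply Lemma~\ref{L1old} with $\delta=1$, and for $t-s\le x^\alpha$ use only the crude bound $|p_{t-s}^\alpha(\cdot-x)-p_{t-s}^\alpha(\cdot)|\le C\bigl(p_{t-s}^\alpha((\cdot-x)/2)+p_{t-s}^\alpha(\cdot/2)\bigr)$. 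In the regime $\beta>(\alpha-1)/2$ both pieces contribute exactly $CVx^{\alpha-\beta}$, so one gets the pointwise bound $T_{n,+}\le C\,c_\varepsilon\,2^{-(\alpha-\beta)n}$ on $A^\varepsilon$ with \emph{no} $2^{\varepsilon_1 n}$ slack. This is precisely what makes the exponent in the Chernoff bound grow like $n^{1/\beta}$ and yields summability; without it the argument collapses, as you observe.
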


The proof of this lemma is a word-for-word repetition of the proof of
Lemma~5.3 in \cite{FleischmannMytnikWachtel2010.optimal.Ann} (it is even
simpler as we do not need additional indexing in $k$ here), and we omit it.
The idea behind the proof is simple: Whenever $X$ has a \textquotedblleft
big\textquotedblright\ jump guaranteed by $A_{n\,},$ this jump corresponds to
the jump of $L_{n}^{+}$ and then it is very difficult for a spectrally
positive process $L_{n}^{+}$ to come down, which is required by $B_{n}%
^{+,\mathrm{c}}$.

\begin{lemma}
[\textbf{Second term in }(\ref{114})]\label{n.L5'}We have%
\begin{equation}
\lim_{N\uparrow\infty}\sum_{n=N}^{\infty}\mathbf{P}\bigl(B_{n}^{-,\mathrm{c}%
}\cap A_{n}\cap A^{\varepsilon}\cap D_{\theta}\bigr)=0.
\end{equation}
{}
\end{lemma}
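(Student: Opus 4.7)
The plan is to drop the events $A_n$ and $D_\theta$ from the intersection, which only enlarges the probability, and then bound $\mathbf{P}(B_n^{-,\mathrm{c}} \cap A^\varepsilon)$ using a truncated-supremum argument for the spectrally positive stable process $L_n^-$, in the spirit of (\ref{48})–(\ref{49}) from the proof of Theorem~\ref{T.prop.dens.fixed}(a). Three ingredients are needed: a sharp jump bound for $L_n^-$ on $A^\varepsilon$, a sharp upper bound on the time change $T_{n,-}$, and the tail estimate Lemma~\ref{L3}.

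First I would bound the jumps of $u \mapsto \int_{(0,u]\times\mathsf{R}} M(\mathrm{d}(s,y))\,\varphi_{n,-}(s,y)$ on $A^\varepsilon$. The strategy is to decompose the integration region as in (\ref{6.T1.10})–(\ref{V4}) into the pieces $D_{\ell,1}$, $D_{\ell,2}$, and $\mathsf{R} \setminus B_2(0)$, apply Lemma~\ref{L1old} with $\delta = \bar\eta_{\mathrm{c}}$ on each piece, and use the refined jump bound from (\ref{Aep}) (with the logarithmic factor $(f_{s,y})^\ell$ in place of the power loss $\gamma$ used in part (a)). The outcome is a jump bound of the form
\[
y_n := c\, x_n^{\bar\eta_{\mathrm{c}}}\, (\log x_n^{-1})^{\ell_0}, \qquad x_n := c\cdot 2^{-n-2},
\]
for some $\ell_0 > 0$ that can be made smaller than $1/(1+\beta) - \varepsilon$ by choosing $q$ in (\ref{def.lambda1}) suitably small.

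Next I would bound $T_{n,-}$ by applying \cite[Lemma~2.12]{FleischmannMytnikWachtel2010.optimal.Ann} with $\theta = 1+\beta$ in the critical regime, obtaining
\[
T_{n,-} \leq c\, x_n^{\alpha-\beta}\, (\log x_n^{-1})^{\ell_1} \quad \text{on}\ A^\varepsilon,
\]
where the power loss $x^{-\varepsilon_1}$ of (\ref{81_1}) is replaced by a mere logarithmic factor. Applying Lemma~\ref{L3} with $\kappa = 1+\beta$, $t = T_{n,-}$, $x = 2^{-\bar\eta_{\mathrm{c}} n}\, n^{1/(1+\beta) - \varepsilon}$, $y = y_n$, and using $\bar\eta_{\mathrm{c}}(1+\beta) = \alpha - \beta$, the base of the resulting bound simplifies to
\[
\frac{C\, T_{n,-}}{x\, y_n^{\beta}} \leq \frac{C\,(\log x_n^{-1})^{\ell_1 - \ell_0\beta}}{n^{1/(1+\beta)-\varepsilon}} \leq n^{-c}
\]
for some $c > 0$ and all large $n$, while the exponent satisfies $x/y_n \sim n^{1/(1+\beta) - \varepsilon - \ell_0}$, a positive power of $n$. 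Consequently,
\[
\mathbf{P}(B_n^{-,\mathrm{c}} \cap A^\varepsilon) \leq n^{-c\, n^{a}}, \qquad a > 0,
\]
which is summable in $n$.

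The main obstacle is the sharp estimate on $T_{n,-}$: the bound (\ref{81_1}) used in part (a) carries an arbitrary positive power loss $x_n^{-\varepsilon_1}$, which, combined with Lemma~\ref{L3}, would make the base $CT_{n,-}/(x\,y_n^\beta)$ blow up like $2^{\varepsilon_1 n}$. To succeed at the critical exponent $\bar\eta_{\mathrm{c}}$ we must sharpen \cite[Lemma~2.12]{FleischmannMytnikWachtel2010.optimal.Ann} to an endpoint version where the $\varepsilon_1$ power loss is replaced by a merely logarithmic one; this is the delicate technical point of the argument.
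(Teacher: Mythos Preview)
Your approach has a genuine gap in the jump bound, not in the time-change estimate.  You claim that on $A^\varepsilon$ the jumps of $L_n^-$ are bounded by $y_n=c\,x_n^{\bar\eta_{\mathrm c}}(\log x_n^{-1})^{\ell_0}$ with $\ell_0<\tfrac{1}{1+\beta}-\varepsilon$, obtainable by taking $q$ small in~(\ref{def.lambda1}).  But carry out the computation: with $\Delta X_s(y)\le c\bigl((t-s)|y|\bigr)^{1/(1+\beta)}(f_{s,y})^\ell$, $\ell=\tfrac{1}{1+\beta}+q$, and Lemma~\ref{L1old} at $\delta=\bar\eta_{\mathrm c}$, the exponent of $(t-s)$ in $r\,\varphi_{n,-}(s,y)$ vanishes on the critical shell $|y|\asymp(t-s)^{1/\alpha}$, leaving
\[
r\,\varphi_{n,-}(s,y)\ \le\ C\,2^{-\bar\eta_{\mathrm c}n}\bigl(\log((t-s)^{-1})\bigr)^{\ell}\bigl(\log(|y|^{-1})\bigr)^{\ell}\ \asymp\ C\,2^{-\bar\eta_{\mathrm c}n}\bigl(\log((t-s)^{-1})\bigr)^{2\ell}.
\]
Both logarithms in $f_{s,y}$ contribute, so the log-exponent is $2\ell=\tfrac{2}{1+\beta}+2q$, which exceeds $\tfrac{1}{1+\beta}$ no matter how small $q$ is.  Worse, there is no uniform cap at all: as $s\uparrow t$ the right-hand side diverges.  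Consequently the ratio $x/y$ in Lemma~\ref{L3} would be $o(1)$ (or undefined), and the lemma yields nothing.  The obstacle you flag for $T_{n,-}$ is secondary; even a loss-free endpoint version of \cite[Lemma~2.12]{FleischmannMytnikWachtel2010.optimal.Ann} would not rescue the argument.

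This is precisely why the paper does \emph{not} drop $A_n$ and $D_\theta$.  It splits $B_n^{-,\mathrm c}\subseteq U_n^1\cup U_n^2$.  On $U_n^2$ the jump cap $2^{-\bar\eta_{\mathrm c}n}n^{1/(1+\beta)-2\varepsilon}$ holds \emph{by definition}, so Lemma~\ref{L3} applies and gives Lemma~\ref{n.L6}.  On $U_n^1$ no attempt is made to control $L_n^-$; instead Lemma~\ref{n.L7} localises: on $A^\varepsilon$ any jump of $L_n^-$ above the threshold must stem from a jump of $M$ in a narrow window $D$ around $s\approx t-2^{-\alpha n}$, $|y|\lesssim(t-s)^{1/\alpha}\log^\xi((t-s)^{-1})$.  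The event $A_n$ then furnishes a \emph{second} big jump of $M$, at a disjoint spatial location $y>c\,2^{-n-1}$, so $A^\varepsilon\cap A_n\cap U_n^1\subseteq A^\varepsilon\cap E_n(\rho,\xi)$ with $E_n$ the event of at least two such big jumps.  The probability of $E_n$ is bounded by the square of the jump intensity over $D$, and computing that intensity uses $W_{B_3(0)}\le\theta^{-1}$ from $D_\theta$ to control $X_s$ on small balls.  Both $A_n$ and $D_\theta$ are therefore essential inputs; the ``two-jump'' mechanism replaces the unattainable uniform jump bound your argument requires.
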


The remaining part of the paper will be devoted to the proof of
Lemma~\ref{n.L5'} and we prepare now for it.

One can easily see that $B_{n}^{-,\mathrm{c}}$ is a subset of a union of two
events (with the obvious correspondence):
\begin{align}
B_{n}^{-,\mathrm{c}}\subseteq U_{n}^{1}\cup U_{n}^{2}:=\,  &  \bigl\{\Delta
L_{n}^{-}>2^{-\bar{\eta}_{\mathrm{c}}n}\,n^{1/(1+\beta)-2\varepsilon
}\bigr\}\nonumber\\
&  \cup\bigl\{\Delta L_{n}^{-}\leq2^{-\bar{\eta}_{\mathrm{c}}n}\,n^{1/(1+\beta
)-2\varepsilon},\ L_{n}^{-}(T_{n,-})>2^{-\bar{\eta}_{\mathrm{c}}%
n}\,n^{1/(1+\beta)-\varepsilon}\bigr\},
\end{align}
where
\begin{equation}
\Delta L_{n}^{-}:=\sup_{0<s\leq T_{n,-}}\Delta L_{n}^{-}(s).
\end{equation}
The occurrence of the event $U_{n}^{1}$ means that $L_{n}^{-}$ has big jumps.
If $U_{n}^{2}$ occurs, it means that $L_{n}^{-}$ gets large without big jumps.
It is well-known that stable processes without big jumps can not achieve large
values. Thus, the statement of the next lemma is not surprising.

\begin{lemma}
[\textbf{No big values of} $L_{n}^{-}$ \textbf{in case of absence of
\textquotedblleft big\textquotedblright\ jumps}]\label{n.L6}\hfill We\newline
have%
\begin{equation}
\lim_{N\uparrow\infty}\sum_{n=N}^{\infty}\mathbf{P}(U_{n}^{2}\cap
A^{\varepsilon})=0.
\end{equation}
{}
\end{lemma}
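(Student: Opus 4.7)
The plan is to bound $\mathbf{P}(U_{n}^{2}\cap A^{\varepsilon})$ by applying Lemma~\ref{L3} to the spectrally positive $(1+\beta)$-stable process $L_{n}^{-}$ stopped at $T_{n,-}$, with threshold $\xi_{n}:=2^{-\bar{\eta}_{\mathrm{c}}n}\,n^{1/(1+\beta)-\varepsilon}$ and jump bound $\eta_{n}:=2^{-\bar{\eta}_{\mathrm{c}}n}\,n^{1/(1+\beta)-2\varepsilon}$. Since $\xi_{n}/\eta_{n}=n^{\varepsilon}$ is only a modest power of $n$, the base $CT_{n,-}/(\xi_{n}\eta_{n}^{\beta})$ in the Lemma~\ref{L3} estimate must be pushed well below $1$. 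Because $(1+\beta)\bar{\eta}_{\mathrm{c}}=\alpha-\beta$ gives $\xi_{n}\eta_{n}^{\beta}=2^{-n(\alpha-\beta)}\,n^{1-(1+2\beta)\varepsilon}$, the key requirement is a \emph{sharp} deterministic bound $T_{n,-}\leq C\,2^{-n(\alpha-\beta)}$ on $A^{\varepsilon}$, i.e., without the $\varepsilon_{1}$-loss present in~(\ref{81_1}).

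To establish this sharp bound I would estimate
\begin{equation*}
T_{n,-}\,\leq\,\int_{0}^{t}\!\mathrm{d}s\int_{\mathsf{R}}X_{s}(\mathrm{d}y)\,\bigl|p_{t-s}^{\alpha}(y-x)-p_{t-s}^{\alpha}(y)\bigr|^{1+\beta}
\end{equation*}
(with $x:=c\,2^{-n-2}$) by splitting the $s$-integral at $s=t-x^{\alpha}$. On the far piece $s\leq t-x^{\alpha}$, Lemma~\ref{L1old} with $\delta=1$ together with $(p_{t-s}^{\alpha}(\cdot))^{\beta}\leq C(t-s)^{-\beta/\alpha}$ majorises the integrand by $Cx^{1+\beta}(t-s)^{-(1+2\beta)/\alpha}(p_{t-s}^{\alpha}(y/2)+p_{t-s}^{\alpha}((y-x)/2))$; on the near piece I would use instead the crude bound $|p_{t-s}^{\alpha}(y-x)-p_{t-s}^{\alpha}(y)|^{1+\beta}\leq C(t-s)^{-\beta/\alpha}(p_{t-s}^{\alpha}(y)+p_{t-s}^{\alpha}(y-x))$. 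In both cases elementary scaling arguments reduce the $y$-integral to values of $S_{2^{\alpha}(t-s)}^{\alpha}X_{s}$ at points in $B_{2}(0)$, hence bounded by $V\leq c_{\varepsilon}$. Under the standing assumption $\beta>(\alpha-1)/2$ one has $(1+2\beta)/\alpha>1$, so the far time integral is $O(x^{\alpha-1-2\beta})$ and the near one is $O(x^{\alpha-\beta})$, together yielding $T_{n,-}\leq C\,2^{-n(\alpha-\beta)}=:T_{n}^{\ast}$.

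The remaining steps are routine. The inclusion $U_{n}^{2}\cap A^{\varepsilon}\subseteq\{\sup_{0\leq u\leq T_{n}^{\ast}}L_{n}^{-}(u)\,\mathsf{1}\{\sup_{0\leq v\leq u}\Delta L_{n}^{-}(v)\leq\eta_{n}\}\geq\xi_{n}\}$ is immediate, and Lemma~\ref{L3} with $\kappa=1+\beta$ then gives $\mathbf{P}(U_{n}^{2}\cap A^{\varepsilon})\leq(Cn^{-(1-(1+2\beta)\varepsilon)})^{n^{\varepsilon}}\leq(Cn^{-1/2})^{n^{\varepsilon}}$ for $\varepsilon<1/(2(1+2\beta))$ and $n$ large; this decays faster than any power of $n$ and is manifestly summable. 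The main obstacle is the sharp bound above: a direct invocation of~(\ref{81_1}) would insert a factor $2^{n\varepsilon_{1}}$ into the base, and raising that to the modest power $n^{\varepsilon}$ would produce a super-exponentially \emph{large} bound rather than a decaying one. The time-split circumvents the loss because on the near region $(t-x^{\alpha},t)$ the natural $\alpha$-stable length scale is already of order $x$, making H\"{o}lder interpolation unnecessary there, while on the far region the Lipschitz bound is sharp.
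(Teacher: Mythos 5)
Your argument is correct. The paper omits the proof of this lemma, deferring to Lemma~5.6 of \cite{FleischmannMytnikWachtel2010.optimal.Ann}, so a direct line-by-line comparison is not possible from the source at hand; what can be said is that your self-contained argument is sound, and that you have correctly identified the genuine obstacle: a naive insertion of the bound (\ref{81_1}) would put a factor $2^{n\varepsilon_1}$ into the base of the Lemma~\ref{L3} estimate, which — raised to the modest power $\xi_n/\eta_n = n^{\varepsilon}$ — blows up rather than decays, so that bound is useless here. Your remedy, the split of the $s$-integral for $T_{n,-}$ at $s = t - x^{\alpha}$ with $x = c\,2^{-n-2}$, is exactly the right balance point (the $\alpha$-stable spatial scale at time lag $x^{\alpha}$ is $x$): on the far piece, Lemma~\ref{L1old} with $\delta=1$ together with $\|p_{t-s}^{\alpha}\|_{\infty}\leq C(t-s)^{-1/\alpha}$ and the hypothesis $\beta>(\alpha-1)/2$ (so $(1+2\beta)/\alpha>1$) gives a convergent integral of order $x^{1+\beta}\cdot x^{\alpha-1-2\beta}$, while on the near piece the crude bound $|a-b|^{1+\beta}\leq (2\max\{a,b\})^{\beta}(a+b)$ and $\beta/\alpha<1$ give order $x^{\alpha-\beta}$, so $T_{n,-}\leq C c_{\varepsilon}\,2^{-n(\alpha-\beta)}$ on $A^{\varepsilon}$ with no logarithmic or $\varepsilon_1$-loss. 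Since $(1+\beta)\bar\eta_{\mathrm{c}}=\alpha-\beta$ when $\beta>(\alpha-1)/2$, the $2^{-n(\alpha-\beta)}$ factors cancel exactly in $C T_{n,-}/(\xi_n\eta_n^{\beta})$, leaving $Cn^{-(1-(1+2\beta)\varepsilon)}$; the constraint $\varepsilon<1/8<1/(1+2\beta)$ (valid for all $\beta<1$) keeps this exponent negative, and raising a base $O(n^{-c})$ to the power $n^{\varepsilon}$ yields super-polynomial decay, hence a tail sum tending to zero. The passage from $U_n^2\cap A^{\varepsilon}$ to the event in Lemma~\ref{L3} (evaluate the running supremum at $u=T_{n,-}\leq T_n^{\ast}$, where the indicator is $1$ by the definition of $U_n^2$) is also correctly handled. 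In short, the proof is complete; the time-split giving the sharp $T_{n,-}$-bound is the essential point, and you have supplied it.
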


We omit the proof of this lemma as well, since its crucial part related to
bounding of $\mathbf{P}(U_{n}^{2}\cap A^{\varepsilon})$ is a repetition of the
proof of Lemma~5.6 in \cite{FleischmannMytnikWachtel2010.optimal.Ann} (again
with obvious simplifications).

\begin{lemma}
[\textbf{Big jumps of} $L_{n}^{-}$ \textbf{caused by several big jumps of}
$M$]\label{n.L7}\hfill There \newline exist constants $\rho$ and $\xi$ such
that, for all sufficiently large values of \thinspace$n$,
\begin{equation}
A^{\varepsilon}\cap A_{n}\cap U_{n}^{1}\,\subseteq\,A^{\varepsilon}\cap
E_{n}(\rho,\xi),
\end{equation}
where
\begin{align}
E_{n}(\rho,\xi):=  &  \left\{  _{\!_{\!_{\,_{_{\!_{\!_{_{\!_{\!_{_{\!_{\!_{\,}%
}}\,}}}}}}}}}}\mathrm{There}\text{ }\mathrm{exist}\text{ }\mathrm{at}\text{
}\mathrm{least}\text{\textrm{ }}\mathrm{two}\text{\textrm{ }}\mathrm{jumps}%
\text{\textrm{ }}\mathrm{of\;}M\ \mathrm{of}\text{\textrm{ }}\mathrm{the}%
\text{\textrm{ }}\mathrm{form\;}r\delta_{(s,y)}\;\mathrm{such}\text{\textrm{
}}\mathrm{that}\right. \nonumber\\
&  \quad r\geq\left(  (t-s)\max\bigl\{(t-s)^{1/\alpha},|y|\bigr\}\right)
^{1/(1+\beta)}\log^{1/(1+\beta)-2\varepsilon}\bigl((t-s)^{-1}%
\bigr),\nonumber\\[3pt]
&  \quad|y|\leq(t-s)^{1/\alpha}\log^{\xi}\bigl((t-s)^{-1}\bigr),\ \!\!\left.
_{\!_{\!_{\,_{_{\!_{\!_{_{\!_{\!_{_{\!_{\!_{\,}}}\,}}}}}}}}}}s\in\left[
t-2^{-\alpha n}\,n^{\rho},\ t-2^{-\alpha n}\,n^{-\rho}\right]  \right\}  \!.
\end{align}
{}
\end{lemma}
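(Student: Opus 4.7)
The plan is to exhibit two distinct jumps of $M$ on $A^{\varepsilon}\cap A_n\cap U_n^1$ witnessing $E_n(\rho,\xi)$: the first one is the jump supplied directly by $A_n$, and the second one is the jump of $M$ generating the big value of $\Delta L_n^-$ guaranteed by $U_n^1$. These two are automatically distinct, because $\varphi_{n,-}(s_2,y_2)>0$ forces $y_2<x/2=c\,2^{-n-3}$, whereas the first jump sits in $(c\,2^{-n-1},\,3c\,2^{-n-1})$.

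The first jump is easy: since $(t-s_1)^{1/\alpha}\leq 2^{-n}\leq y_1$, one has $\max\{(t-s_1)^{1/\alpha},|y_1|\}=y_1$, and using $(\bar{\eta}_{\mathrm{c}}+1)/\alpha=(\alpha+1)/(\alpha(1+\beta))$ together with $\log((t-s_1)^{-1})\sim n$ one checks directly that the bound $r_1\geq 2^{-(\bar{\eta}_{\mathrm{c}}+1)n}n^{1/(1+\beta)}$ supplied by $A_n$ exceeds $((t-s_1)|y_1|)^{1/(1+\beta)}\log^{1/(1+\beta)-2\varepsilon}((t-s_1)^{-1})$ once $n$ is large; together with the obvious $|y_1|$ and $s_1$ conditions this shows that $(s_1,y_1,r_1)$ satisfies $E_n(\rho,\xi)$ for any $\rho>0$, any $\xi\geq 0$ and all large $n$.

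The core of the argument is the second jump. By the jump correspondence of \cite[Lemma~2.15]{FleischmannMytnikWachtel2010.optimal.Ann}, $U_n^1$ gives a jump $r_2\delta_{(s_2,y_2)}$ of $M$ with $r_2\,\varphi_{n,-}(s_2,y_2)\geq L:=2^{-\bar{\eta}_{\mathrm{c}}n}n^{1/(1+\beta)-2\varepsilon}$. I would combine this with the $A^{\varepsilon}$-upper bounds on $r_2$ (the first bound when $|y_2|\leq 1/\mathrm{e}$, the second one otherwise) and with the upper bounds on $\varphi_{n,-}$ coming from Lemma~\ref{L1old} (applied with $\delta=1$) together with the tail estimate~(\ref{tail}). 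The outer regime $|y_2|>1/\mathrm{e}$ is immediately ruled out for large $n$, since it forces $r_2\varphi_{n,-}\leq C\,2^{-n}\ll L$ (as $\bar{\eta}_{\mathrm{c}}<1$). For $|y_2|\leq 1/\mathrm{e}$, a case analysis over the central subregime $|y_2|\leq(t-s_2)^{1/\alpha}$ and the tail subregime $|y_2|>(t-s_2)^{1/\alpha}$, substituting the first $A^{\varepsilon}$-bound on $r_2$ and the Lemma~\ref{L1old}-bound $\varphi_{n,-}\leq Cx(t-s_2)^{-2/\alpha}$ (respectively $\varphi_{n,-}\leq Cx(t-s_2)^{1-1/\alpha}|y_2|^{-\alpha-1}$) into $L\leq r_2\varphi_{n,-}$, leads in both subregimes to the common inequality
\[
L\;\leq\;C\,x\,(t-s_2)^{(\bar{\eta}_{\mathrm{c}}-1)/\alpha}\,(f_{s_2,y_2})^{\ell},
\]
which inverts (using $\bar{\eta}_{\mathrm{c}}<1$) to $t-s_2\leq C\,2^{-\alpha n}n^{\rho_1}$. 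Replacing the Lemma~\ref{L1old}-bound by the trivial bound $\varphi_{n,-}\leq C(t-s_2)^{-1/\alpha}$, and controlling $|y_2|^{1/(1+\beta)}(\log|y_2|^{-1})^{\ell}\leq C(t-s_2)^{1/(\alpha(1+\beta))}n^{\ell+\xi/(1+\beta)}$ (valid once the spatial bound $|y_2|\leq(t-s_2)^{1/\alpha}\log^{\xi}((t-s_2)^{-1})$ is in place, which drops out of the same computation in the tail subregime), yields the matching lower bound $t-s_2\geq C\,2^{-\alpha n}n^{-\rho_2}$. Finally, the $r_2$-bound required by $E_n$ follows from $r_2\geq L/\varphi_{n,-}(s_2,y_2)$ upon using whichever of $Cx(t-s_2)^{-2/\alpha}$ and $C(t-s_2)^{-1/\alpha}$ is sharper, according to whether $(t-s_2)^{1/\alpha}$ is smaller or larger than $x$.

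Setting $\rho:=\max\{\rho_1,\rho_2\}$ and $\xi$ large enough to absorb the residual logarithmic factors completes the argument. The main obstacle lies in the previous paragraph: the matching time scale $t-s_2\asymp 2^{-\alpha n}$ hinges on the algebraic identity $(1-\bar{\eta}_{\mathrm{c}})/\mu=\alpha$ for $\mu:=(2\beta+1-\alpha)/(\alpha(1+\beta))$, whose very meaningfulness (positivity of $\mu$ and $\bar{\eta}_{\mathrm{c}}<1$) rests squarely on the hypothesis $\beta>(\alpha-1)/2$ of Theorem~\ref{T.prop.dens.fixed}(b); any cruder handling of the $\log|y_2|^{-1}$ factor in $f_{s_2,y_2}$ (e.g., using only the global bound $\sup_{y\in(0,1)}y^{\gamma}\log^{\ell}(y^{-1})<\infty$) destroys the matching $\asymp 2^{-\alpha n}$ and the construction collapses.
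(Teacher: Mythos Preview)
Your overall strategy is exactly the paper's: exhibit one qualifying jump from $A_n$ and a second one forced by $U_n^1$, using the fact that $\varphi_{n,-}(s,y)>0$ only for $y<c\,2^{-n-3}$ to guarantee the two are distinct. The direct-derivation presentation you give is logically equivalent to the paper's contrapositive case-analysis on the complement of the target region $D$.

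There is, however, a genuine gap in your execution for the tail subregime $|y_2|>(t-s_2)^{1/\alpha}$. With your choice $\delta=1$ in Lemma~\ref{L1old}, the combined inequality
\[
L\;\leq\;C\,x\,(t-s_2)^{\,1-1/\alpha+1/(1+\beta)}\,|y_2|^{\,1/(1+\beta)-\alpha-1}\,(f_{s_2,y_2})^{\ell}
\]
is a single relation in the two unknowns $t-s_2$ and $|y_2|$. You can indeed extract the \emph{upper} time bound from it (substituting $|y_2|>(t-s_2)^{1/\alpha}$ and $\log(|y_2|^{-1})\leq C\log((t-s_2)^{-1})$ collapses it to $(t-s_2)^{(1-\bar{\eta}_{\mathrm{c}})/\alpha}\leq CxL^{-1}\log^{2\ell}((t-s_2)^{-1})$). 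But the spatial bound $|y_2|\leq (t-s_2)^{1/\alpha}\log^{\xi}((t-s_2)^{-1})$ does \emph{not} ``drop out of the same computation'': solving for $|y_2|$ leaves a factor $(t-s_2)^{(\bar{\eta}_{\mathrm{c}}-1)/\alpha}$ on the right, which blows up as $t-s_2\to 0$ and can only be controlled via the lower time bound --- which in turn, as you correctly note, requires the spatial bound. This is a genuine circularity, not just a presentational issue.

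The paper breaks this circle by treating the spatial constraint \emph{first} and using $\delta=\bar{\eta}_{\mathrm{c}}$ (not $\delta=1$) in Lemma~\ref{L1old} for that step. The point is that with $\delta=\bar{\eta}_{\mathrm{c}}$ one gets $\varphi_{n,-}\leq C\,2^{-\bar{\eta}_{\mathrm{c}}n}(t-s_2)^{1-\bar{\eta}_{\mathrm{c}}/\alpha}|y_2|^{-\alpha-1}$, and combining with the $A^{\varepsilon}$-bound on $r_2$ and then maximizing over $|y_2|\geq (t-s_2)^{1/\alpha}\log^{\xi}((t-s_2)^{-1})$ produces a bound in which the power of $(t-s_2)$ is exactly
\[
1-\frac{\bar{\eta}_{\mathrm{c}}}{\alpha}+\frac{1}{1+\beta}+\frac{1}{\alpha}\Bigl(\frac{1}{1+\beta}-\alpha-1\Bigr)=0,
\]
so one obtains $\Delta L_n^{-}\leq C\,2^{-\bar{\eta}_{\mathrm{c}}n}\log^{2\ell-\xi(\alpha+1-1/(1+\beta))}((t-s_2)^{-1})$, which for $\xi$ large contradicts $U_n^1$ outright, \emph{uniformly in} $t-s_2$. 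Only after the spatial constraint is in hand does the paper use $\delta=1$ (for $s\leq t-2^{-\alpha n}n^{\rho}$) and the trivial bound $\varphi_{n,-}\leq C(t-s_2)^{-1/\alpha}$ (for $s\geq t-2^{-\alpha n}n^{-\rho}$) to pin down the time window, and finally $\delta=\bar{\eta}_{\mathrm{c}}$ again for the size constraint. The ordering space~$\to$~time~$\to$~size is what makes the argument go through; your ordering time~$\to$~space is where it jams.
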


\begin{proof}
By the definition of $A_{n\,},$\thinspace\ there exists a jump of $M$ of the
form $r\delta_{(s,y)}$ with $r,s$ as in $E_{n}(\rho,\xi)$, and $y>c\,2^{-n-1}%
$. Furthermore, noting that $\varphi_{n,-}(y)=0$ for $y\geq c\,2^{-n-3},$ we
see that the jumps $r\delta_{(s,y)}$ of $M$ contribute to $L_{n}^{-}(T_{n,-})$
if and only if $y<c\,2^{-n-3}$. Thus, to prove the lemma it is sufficient to
show that $U_{n}^{1}$ yields the existence of at least one further jump of $M$
on the half-line $\{y<c\,2^{-n-3}\}$ with properties mentioned in the
statement. Denote
\begin{align}
D:=\bigg\{(r,s,y)\!:\  &  r\geq\!\left(  (t-s)\max\bigl\{(t-s)^{1/\alpha
},|y|\bigr\}\right)  ^{1/(1+\beta)}\log^{1/(1+\beta)-2\varepsilon
}\bigl((t-s)^{-1}\bigr),\nonumber\\
&  y\in\left(  -(t-s)^{1/\alpha}\log^{\xi}\bigl((t-s)^{-1}\bigr),\,c\,2^{-n-3}%
\right)  \!,\ \nonumber\\
&  \qquad\qquad\qquad\qquad\qquad\qquad\quad\ \,s\in\left[  t-2^{-\alpha
n}\,n^{\rho},\ t-2^{-\alpha n}\,n^{-\rho}\right]  \!\bigg\}. \label{setD}%
\end{align}
Then we need to show that $U_{n}^{1}$ implies the existence of a jump
$r\delta_{(s,y)}$ of $M$ with $(r,s,y)\in D.$

Note that
\begin{align*}
\lefteqn{D=D_{1}\cap D_{2}\cap D_{3}}\\
&  :=\left\{  (r,s,y):\;r\geq0,\ s\in\lbrack0,t],\;y\in\left(
-(t-s)^{1/\alpha}\log^{\xi}\bigl((t-s)^{-1}\bigr),\,c\,2^{-n-3}\right)
\right\} \\
&  \quad\quad\cap\Big\{(r,s,y):\;r\geq0,\ y\in(-\infty,\,c\,2^{-n-3}%
),\ s\in\left[  t-2^{-\alpha n}\,n^{\rho},\ t-2^{-\alpha n}\,n^{-\rho}\right]
\!\Big\}\\
&  \quad\quad\cap\bigg\{(r,s,y):\;y\in(-\infty,\,c\,2^{-n-3}),\ s\in
\lbrack0,t],\\
&  \;\;\;\;\;\;\;\quad\qquad\ r\geq\left(  (t-s)\max\bigl\{(t-s)^{1/\alpha
},|y|\bigr\}\right)  ^{1/(1+\beta)}\log^{1/(1+\beta)-2\varepsilon
}\bigl((t-s)^{-1}\bigr)\bigg\}.
\end{align*}
Therefore,%
\begin{equation}
D^{\mathrm{c}}\cap\{y<c2^{-n-3}\}\,=\,\left(  D_{1}^{\mathrm{c}}%
\cap\{y<c\,2^{-n-3}\}\right)  \cup\left(  D_{1}\cap D_{2}^{\mathrm{c}}\right)
\cup\left(  D_{1}\cap D_{2}\cap D_{3}^{\mathrm{c}}\right)  \!,
\end{equation}
where the complements are defined with respect to the set%
\begin{equation}
\left\{  _{\!_{\!_{\,}}}(r,s,y):\;r\geq0,\ s\in\lbrack0,t],\;y\in
\mathsf{R}\right\}  \!.
\end{equation}

We first show that any jumps of $M$ in $D_{1}^{\mathrm{c}}\cap\{y<c\,2^{-n-3}%
\}$ cannot be the course of a jump of $L_{n}^{-}$ such that $U_{n}^{1}$ holds.
Indeed, using Lemma~\ref{L1old} with $\delta=\bar{\eta}_{\mathrm{c}\,}%
,$\thinspace\ we get for $y<0$ the inequality
\begin{gather}
\varphi_{n,-}(y)=p_{t-s}^{\alpha}(y)-p_{t-s}^{\alpha}(y-c\,2^{-n-1}%
)\leq2^{1-\bar{\eta}_{\mathrm{c}}n}(t-s)^{-\bar{\eta}_{\mathrm{c}}/\alpha
}p_{t-s}^{\alpha}(y)\nonumber\\
\leq C\,2^{-\bar{\eta}_{\mathrm{c}}n}(t-s)^{-(1+\bar{\eta}_{\mathrm{c}%
})/\alpha}\Big(\frac{y}{(t-s)^{1/\alpha}}\Big)^{\!-\alpha-1}\nonumber\\
=\,C\,2^{-\bar{\eta}_{\mathrm{c}}n}(t-s)^{1-\bar{\eta}_{\mathrm{c}}/\alpha
}|y|^{-\alpha-1}, \label{n8}%
\end{gather}
in the second step we used the scaling property and (\ref{tail}).

Further, by~(\ref{Aep}), on the set $A^{\varepsilon}$ we have
\begin{equation}
\Delta X_{s}(y)\leq C\,\bigl(|y|(t-s)\bigr)^{1/(1+\beta)}(f_{s,y})^{\ell
},\quad|y|\leq1/\mathrm{e}, \label{n9}%
\end{equation}
and
\begin{equation}
\Delta X_{s}(y)\leq C\,(t-s)^{1/(1+\beta)-\gamma},\quad|y|>1/\mathrm{e},
\label{n10}%
\end{equation}
and recall that $f_{s,x}=\,\log\bigl((t-s)^{-1}\bigr)\,\mathsf{1}_{\{x\neq
0\}}\log\bigl(|x|^{-1}\bigr)$. Combining (\ref{n8}) and~(\ref{n9}), we
conclude that the corresponding jump of $L_{n}^{-},$ henceforth denoted by
\hspace{-1pt}$\Delta L_{n}^{-}[r\delta_{(s,y)}],$ is bounded by
\begin{equation}
C\,2^{-\bar{\eta}_{\mathrm{c}}n}(t-s)^{1-\bar{\eta}_{\mathrm{c}}/\alpha
+\frac{1}{1+\beta}}\log^{\frac{1}{1+\beta}+q}\bigl((t-s)^{-1}%
\bigr)|y|^{-\alpha-1+\frac{1}{1+\beta}}\log^{\frac{1}{1+\beta}+q}%
\bigl(|y|^{-1}\bigr).
\end{equation}
Since $|y|^{-\alpha-1+\frac{1}{1+\beta}}\log^{\frac{1+\gamma}{1+\beta}%
}\bigl(|y|^{-1}\bigr)$ is monotone decreasing, we get, maximizing over $y$,
for $y<-(t-s)^{1/\alpha}\log^{\xi}\bigl((t-s)^{-1}\bigr)$ the bound
\begin{equation}
\Delta L_{n}^{-}[r\delta_{(s,y)}]\leq C\,2^{-\bar{\eta}_{\mathrm{c}}n}%
\log^{\frac{2}{1+\beta}+2q-\xi(\alpha+1-\frac{1}{1+\beta})}\bigl(|y|^{-1}%
\bigr).
\end{equation}
Choosing $\xi\geq\frac{2+2q(1+\beta)}{(1+\beta)(1+\alpha)-1}$\thinspace, we
see that
\begin{equation}
\Delta L_{n}^{-}[r\delta_{(s,y)}]\leq C\,2^{-\bar{\eta}_{\mathrm{c}}n}%
,\quad|y|<1/\mathrm{e}. \label{nn1}%
\end{equation}
Moreover, if $y<-1/\mathrm{e}$, then it follows from (\ref{n8}) and
(\ref{n10}) that the jump $\Delta L_{n}^{-}[r\delta_{(s,y)}]$ is bounded by
\begin{equation}
C\,2^{-\bar{\eta}_{\mathrm{c}}n}(t-s)^{1-\bar{\eta}_{\mathrm{c}}/\alpha
+\frac{1}{1+\beta}-\gamma}|y|^{-\alpha-1}\leq C\,2^{-\bar{\eta}_{\mathrm{c}}%
n}. \label{nn2}%
\end{equation}
Combining (\ref{nn1}) and (\ref{nn2}), we see that all the jumps of $M$ in
$D_{1}^{\mathrm{c}}\cap\{y<c\,2^{-n-3}\}$ do not produce jumps of $L_{n}^{-}$
such that $U_{n}^{1}$ holds.

We next assume that $M$ has a jump $r\delta_{(s,y)}$ in $D_{1}\cap
D_{2}^{\mathrm{c}}$. If, additionally, $s\leq t-2^{-\alpha n}\,n^{\rho}$,
then, using Lemma~\ref{L1old} with $\delta=1$, we get
\begin{equation}
\varphi_{n,-}(y)=p_{t-s}^{\alpha}(y)-p_{t-s}^{\alpha}(y-c\,2^{-n-1}%
)\leq2^{1-n}(t-s)^{-2/\alpha}.
\end{equation}
\textrm{F}rom this bound and (\ref{n9}) we obtain
\begin{gather}
\Delta L_{n}^{-}[r\delta_{(s,y)}]\leq C\,2^{-n}(t-s)^{-2/\alpha+\frac
{1}{1+\beta}}\log^{\frac{1}{1+\beta}+q}\bigl((t-s)^{-1}\bigr)|y|^{\frac
{1}{1+\beta}}\log^{\frac{1}{1+\beta}+q}\bigl(|y|^{-1}\bigr)\nonumber\\
\leq C\,2^{-n}(t-s)^{(\frac{1+\alpha}{1+\beta}-2)/\alpha}\log^{\frac{2+\xi
}{1+\beta}+2q}\bigl((t-s)^{-1}\bigr).
\end{gather}
Using the assumption $t-s\geq2^{-\alpha n}\,n^{\rho}$, we arrive at the
inequality
\begin{equation}
\Delta L_{n}^{-}[r\delta_{(s,y)}]\leq C\,2^{-\bar{\eta}_{\mathrm{c}}%
n}\,n^{-\rho(1-\bar{\eta}_{\mathrm{c}})/\alpha+\frac{2+\xi}{1+\beta}+2q}.
\end{equation}
\textrm{F}rom this we see that if we choose $\rho\geq\frac{\alpha
(\xi+2+2q(1+\beta))}{(1+\beta)(1-\bar{\eta}_{\mathrm{c}})}$ then the jumps of
$L_{n}^{-}$ are bounded by $C\,2^{-\bar{\eta}_{\mathrm{c}}n}$, and hence
$U_{n}^{1}$ does not occur.

If $M$ has a jump in $D_{1}\cap D_{2}^{\mathrm{c}}$ at time $s\geq
t-2^{-\alpha n}\,n^{-\rho}$ , then, using (\ref{n9}) and the bound
\begin{equation}
\varphi_{n,-}(y)=p_{t-s}^{\alpha}(y)-p_{t-s}^{\alpha}(y-c\,2^{-n-1})\leq
p_{t-s}^{\alpha}(0)\leq C\,(t-s)^{-1/\alpha},
\end{equation}
we get for $y\in\left(  -(t-s)^{1/\alpha}\log^{\xi}\bigl((t-s)^{-1}%
\bigr),\ c\,2^{-n-3}\right)  $ and $t-s\leq2^{-\alpha n}\,n^{-\rho}$ the
inequality
\begin{gather}
\Delta L_{n}^{-}[r\delta_{(s,y)}]\leq C\,(t-s)^{(\frac{1+\alpha}{1+\beta
}-1)/\alpha}\log^{\frac{2+\xi}{1+\beta}+2q}\bigl((t-s)^{-1}\bigr)\nonumber\\
\leq C\,2^{-\bar{\eta}_{\mathrm{c}}n}\,n^{-\rho(\bar{\eta}_{\mathrm{c}}%
/\alpha)+\frac{2+\xi}{1+\beta}+2q}.
\end{gather}
Choosing $\rho\geq\frac{\alpha(\xi+2+2q(1+\beta))}{(1+\beta)\bar{\eta
}_{\mathrm{c}}}$, we conclude that $\Delta L_{n}^{-}[r\delta_{(s,y)}]\leq
C\,2^{-\bar{\eta}_{\mathrm{c}}n}$, and again $U_{n}^{1}$ does not occur.

Finally, it remains to consider the jumps of $M$ in $D_{1}\cap D_{2}\cap
D_{3\,}^{\mathrm{c}}.$\thinspace\ If the value of the jump does not exceed
$(t-s)^{\frac{\alpha+1}{\alpha(1+\beta)}}\log^{\frac{1}{1+\beta}-2\varepsilon
}\bigl((t-s)^{-1}\bigr)$, then it follows from Lemma~\ref{L1old} with
$\delta=\bar{\eta}_{\mathrm{c}}$ that
\begin{equation}
\Delta L_{n}^{-}[r\delta_{(s,y)}]\leq C\,2^{-\bar{\eta}_{\mathrm{c}}n}%
\log^{\frac{1}{1+\beta}-2\varepsilon}\bigl((t-s)^{-1}\bigr).
\end{equation}
Then, on $D_{2}$, that is, for $t-s>2^{-\alpha n}\,n^{-\rho}$,
\begin{equation}
\Delta L_{n}^{-}[r\delta_{(s,y)}]\leq C\,2^{-\bar{\eta}_{\mathrm{c}}%
n}\,n^{\frac{1}{1+\beta}-2\varepsilon}. \label{n11}%
\end{equation}
Furthermore, if $y<-(t-s)^{1/\alpha}$ and the value of the jump is less than
$\bigl(|y|(t-s)\bigr)^{\frac{1}{1+\beta}}\log^{\frac{1}{1+\beta}-2\varepsilon
}\bigl((t-s)^{-1}\bigr)$, then, using (\ref{n8}), we get
\begin{gather}
\Delta L_{n}^{-}[r\delta_{(s,y)}]\leq C\,2^{-\bar{\eta}_{\mathrm{c}}%
n}(t-s)^{1-\bar{\eta}_{\mathrm{c}}/\alpha}\log^{\frac{1}{1+\beta}%
-2\varepsilon}\bigl((t-s)^{-1}\bigr)|y|^{-\alpha-1+\frac{1}{1+\beta}%
}\nonumber\\
\leq C\,2^{-\bar{\eta}_{\mathrm{c}}n}\log^{\frac{1}{1+\beta}-2\varepsilon
}\bigl((t-s)^{-1}\bigr).
\end{gather}
Then, on $D_{2}$, that is, for $t-s>2^{-\alpha n}\,n^{-\rho}$,
\begin{equation}
\Delta L_{n}^{-}[r\delta_{(s,y)}]\leq C\,2^{-\bar{\eta}_{\mathrm{c}}%
n}\,n^{\frac{1}{1+\beta}-2\varepsilon}. \label{n12}%
\end{equation}
By (\ref{n11}) and (\ref{n12}), we see that the jumps of $M$ in $D_{1}\cap
D_{2}\cap D_{3}^{\mathrm{c}}$ do not produce jumps such that $U_{n}^{1}$
holds. Combining all the above we conclude that to have $\Delta L_{n}%
^{-}[r\delta_{(s,y)}]>C\,2^{-\bar{\eta}_{\mathrm{c}}n}\,n^{\frac{1}{1+\beta
}-2\varepsilon}$ it is necessary to have a jump in $D_{1}\cap D_{2}\cap
D_{3\,}.$\thinspace\ Thus, the proof is finished.
\end{proof}

\begin{proof}
[Proof of Lemma $\ref{n.L5'}$]In view of the Lemmas~\ref{n.L6} and \ref{n.L7},
it suffices to show that
\begin{equation}
\lim_{N\uparrow\infty}\sum_{n=N}^{\infty}\mathbf{P}\!\left(  _{\!_{\!_{\,}}%
}E_{n}(\rho,\xi)\cap A^{\varepsilon}\cap D_{\theta}\right)  =0. \label{4.70}%
\end{equation}
The intensity of the jumps in $D$ [the set defined in~(\ref{setD}) and
satisfying conditions in $E_{n}(\rho,\xi)$\thinspace] is given by
\begin{align}
\int_{t-2^{-\alpha n}n^{\rho}}^{t-2^{-\alpha n}n^{-\rho}}\mathrm{d}s  &
\int_{|y|\leq(t-s)^{1/\alpha}\log^{\xi}\bigl((t-s)^{-1}\bigr)}X_{s}%
(\mathrm{d}y)\label{n13}\\
&  \qquad\qquad\qquad\qquad\qquad\frac{\log^{2\varepsilon(1+\beta
)-1}\bigl((t-s)^{-1}\bigr)}{(t-s)\max\bigl\{(t-s)^{1/\alpha},|y|\bigr\}}%
\,.\nonumber
\end{align}
Since in (\ref{4.70}) we are interested in a limit as $N\uparrow\infty,$ we
may assume that $n$ is such that $\,(t-s)^{1/\alpha}\log^{\xi}\bigl((t-s)^{-1}%
\bigr)\leq1$\thinspace\ for $\,s\geq t-2^{-\alpha n}n^{\rho}.$\thinspace\ We
next note that
\begin{align}
&  \int_{|y|\leq(t-s)^{1/\alpha}}\frac{X_{s}(\mathrm{d}y)}{\max
\bigl\{(t-s)^{1/\alpha},|y|\bigr\}}\\
&  =\,(t-s)^{-1/\alpha}X_{s}\bigl((-(t-s)^{1/\alpha},\,(t-s)^{1/\alpha
})\bigr)\,\leq\ \theta^{-1}\nonumber
\end{align}
on $D_{\theta\,}$. Further, for every $j\geq1$ satisfying $\,j\leq\log^{\xi
}\bigl((t-s)^{-1}\bigr),$
\begin{align}
&  \int_{j(t-s)^{1/\alpha}\leq|y|\leq(j+1)(t-s)^{1/\alpha}}\frac
{X_{s}(\mathrm{d}y)}{\max\bigl\{(t-s)^{1/\alpha},|y|\bigr\}}\\
&  \leq j^{-1}(t-s)^{-1/\alpha}X_{s}\left(  \left\{  y:\;j(t-s)^{1/\alpha}%
\leq|y|\leq(j+1)(t-s)^{1/\alpha}\right\}  \right)  .\nonumber
\end{align}
Since the set $\,\left\{  y:\;j(t-s)^{1/\alpha}\leq|y|\leq(j+1)(t-s)^{1/\alpha
}\right\}  \,\ $is the union of two balls with radius $\,\frac{1}%
{2}\,(t-s)^{-1/\alpha}\,\ $and centers in $B_{2}(0),$ we can apply
Lemma~\ref{n.L2} with $c=1$ to get%
\begin{equation}
\int_{j(t-s)^{1/\alpha}\leq|y|\leq(j+1)(t-s)^{1/\alpha}}\frac{X_{s}%
(\mathrm{d}y)}{\max\bigl\{(t-s)^{1/\alpha},|y|\bigr\}}\,\leq\,2\,\theta
^{-1}j^{-1}%
\end{equation}
on $D_{\theta\,}.$\thinspace\ As a result, on the event $D_{\theta}$ we get
the inequality%
\begin{gather}
\int_{|y|\leq(t-s)^{1/\alpha}\log^{\xi}\left(  (t-s)^{-1}\right)  }%
\!X_{s}(\mathrm{d}y)\,\frac{1}{\max\bigl\{(t-s)^{1/\alpha},|y|\bigr\}}%
\nonumber\\
\leq C\theta^{-1}\log\!\Big(\left\vert \log\bigl((t-s)^{-1}\bigr)\right\vert
\!\Big).
\end{gather}
Substituting this into (\ref{n13}), we conclude that the intensity of the
jumps is bounded by
\begin{equation}
C\theta^{-1}\int_{t-2^{-\alpha n}n^{\rho}}^{t-2^{-\alpha n}n^{-\rho}%
}\mathrm{d}s\ \frac{\log^{2\varepsilon(1+\beta)-1}\Big((t-s)^{-1}\log
\log\bigl((t-s)^{-1}\bigr)\Big)}{(t-s)}\,.
\end{equation}
Simple calculations show that the latter expression is less than
\begin{equation}
C\theta^{-1}n^{2\varepsilon(1+\beta)-1}\log^{1+2\varepsilon(1+\beta)}n.
\end{equation}
Consequently, since $E_{n}(\rho,\xi)$ holds when there are two jumps in $D$,
we have
\begin{equation}
\mathbf{P}\!\left(  _{\!_{\!_{\,}}}E_{n}(\rho,\xi)\cap A^{\varepsilon}\cap
D_{\theta}\right)  \leq C\theta^{-2}n^{4\varepsilon(1+\beta)-2}\log
^{2+4\varepsilon(1+\beta)}n.
\end{equation}
Because $\varepsilon<1/8\leq1/4(1+\beta)$, the sequence $\mathbf{P}\!\left(
_{\!_{\!_{\,}}}E_{n}(\rho,\xi)\cap A^{\varepsilon}\cap D_{\theta}\right)  $ is
summable, and the proof of the lemma is complete.
\end{proof}

\noindent\textbf{Acknowledgement. }We would like to thank an anonymous referee
for encouraging us to prove the optimality of the H\"{o}lder exponent
$\bar{\eta}_{\mathrm{c}}$ [Theorem~\ref{T.prop.dens.fixed}(b)] and also for
suggesting to use a non-classical discretization in the final part of the
proof of Theorem~\ref{T.prop.dens.fixed}(a). Moreover, we thank two anonymous
referees for a careful reading of our revision manuscript.

{\small
\bibliographystyle{alpha}
\bibliography{bibtex,bibtexmy}

\newcommand{\noopsort}[1]{}
\begin{thebibliography}{DPRZ01}

\bibitem[DPRZ01]{DemboPeresRosenZeitouni2001}
A.~Dembo, Y.~Peres, J.~Rosen, and O.~Zeitouni.
\newblock Thick points for planar {B}rownian motion and the
  {E}rd{\"o}s-{T}aylor conjecture on random walk.
\newblock {\em Acta Math.}, 186:239--270, 2001.

\bibitem[Dur09]{Durand2009}
A.~Durand.
\newblock Singularity sets of {L}\'evy processes.
\newblock {\em Probab. Theory Related Fields}, 143(3-4):517--544, 2009.

\bibitem[Fle88]{Fleischmann1988.critical}
K.~Fleischmann.
\newblock Critical behavior of some measure-valued processes.
\newblock {\em Math. Nachr.}, 135:131--147, 1988.

\bibitem[FMW10]{FleischmannMytnikWachtel2010.optimal.Ann}
K.~Fleischmann, L.~Mytnik, and V.~Wachtel.
\newblock Optimal local {H}\"older index for density states of superprocesses
  with $(1+\beta)$-branching mechanism.
\newblock {\em Ann. Probab.}, 38(3):1180--1220, {\noopsort{a}}2010.

\bibitem[HT00]{HuTaylor2000}
X.~Hu and S.J. Taylor.
\newblock Multifractal structure of a general subordinator.
\newblock {\em Stochastic Process. Appl.}, 88:245--258, 2000.

\bibitem[Jaf99]{Jaffard1999}
S.~Jaffard.
\newblock The multifractal nature of {L}\'evy processes.
\newblock {\em Probab. Theory Related Fields}, 114:207--227, 1999.

\bibitem[Jaf00]{Jaffard2000}
S.~Jaffard.
\newblock On {L}acunary wavelet series.
\newblock {\em Ann. Appl. Probab.}, 10(1):313--329, 2000.

\bibitem[Jaf04]{Jaffard2004}
S.~Jaffard.
\newblock Wavelet techniques in multifractal analysis.
\newblock {\em Proc. Symp. Pure Math.}, 72(2):91--151, 2004.

\bibitem[JM96]{JaffardMeyer1996}
S.~Jaffard and Y.~Meyer.
\newblock Wavelet methods for pointwise regularity and local oscillations of
  functions.
\newblock {\em Memb. Am. Math. Soc.}, 123:587, 1996.

\bibitem[KM05]{KlenkeMoerters2005}
A.~Klenke and P.~M{\"o}rters.
\newblock The multifractal spectrum of {B}rownian intersection local time.
\newblock {\em Ann. Probab.}, 33:1255--1301, 2005.

\bibitem[LGP95]{LeGallPerkins1995}
J.-F. Le~Gall and E.A. Perkins.
\newblock The {H}ausdorff measure of the support of two-dimensional
  super-{B}rownian motion.
\newblock {\em Ann. Probab.}, 23(4):1719--1747, 1995.

\bibitem[MP03]{MytnikPerkins2003}
L.~Mytnik and E.~Perkins.
\newblock Regularity and irregularity of $(1+\beta )$-stable super-{B}rownian
  motion.
\newblock {\em Ann. Probab.}, 31(3):1413--1440, 2003.

\bibitem[MS04]{MoertersShieh2004}
P.~M{\"o}rters and N.R. Shieh.
\newblock On the multifractal spectrum of the branching measure on a
  {G}alton-{W}atson tree.
\newblock {\em J. Appl. Probab.}, 41:1223--1229, 2004.

\bibitem[PT98]{PerkinsTaylor1998}
E.A. Perkins and S.J. Taylor.
\newblock The multifractal structure of super-{B}rownian motion.
\newblock {\em Ann. Inst. H. Poincar\'e Probab. Statist.}, 34(1):97--138, 1998.

\end{thebibliography}
}

{\small
\tableofcontents
}%

%

\end{document}